\numberwithin{equation}{section}
\newtheorem{theorem}{Theorem}[section]
\newtheorem{lemma}[theorem]{Lemma}
\theoremstyle{definition}
\newtheorem{definition}[theorem]{Definition}
\theoremstyle{remark}
\newtheorem{remark}[theorem]{Remark}
\DeclareMathOperator{\supp}{supp}
\begin{document}

\title[Zakharov--Kuznetsov Equation]
{Regular Solutions to Initial-Boundary Value Problems in a Half-Strip for Two-Dimensional Zakharov--Kuznetsov Equation}

\author[A.V.~Faminskii]{Andrei~V.~Faminskii}

\thanks{The publication was prepared with the support of the "RUDN University Program 5-100" and RFBR grants 17-01-00849, 17-51-52022, 18-01-00590.}

\address{Peoples' Friendship University of Russia (RUDN University), 6 Miklukho--Maklaya Street, Moscow, 117198, Russian Federation}

\email{afaminskii@sci.pfu.edu.ru}

\subjclass[2010]{Primary 35Q53; Secondary 35B65}

\keywords{Zakharov--Kuznetsov equation, initial-boundary value problem, global solution, regularity}

\date{}

\begin{abstract}
Initial-boundary value problems in a half-strip with different types of boundary conditions for two-dimensional Zakharov--Kuznetsov equation are considered. Results on global well-posedness in classes of regular solutions in the cases of periodic and Neumann boundary conditions, as well as on internal regularity of solutions for all types of boundary conditions are established. Also in the case of Dirichlet boundary conditions one result on long-time decay of regular solutions is obtained.
\end{abstract}

\maketitle

\section{Introduction. Description of main results}\label{S1}
The paper is devoted to an initial-boundary value problem for two-dimensional Zakharov--Kuznetsov equation (ZK)
\begin{equation}\label{1.1}
u_t+bu_x+u_{xxx}+u_{xyy}+uu_x=0
\end{equation}
($b$ is a real constant), posed on half-strip $\Sigma_+=\mathbb R_+\times (0,L)=\{(x,y): x>0, 0<y<L\}$ with initial and boundary conditions
\begin{equation}\label{1.2}
u(0,x,y)=u_0(x,y),\qquad (x,y)\in\Sigma_+,
\end{equation}
\begin{equation}\label{1.3}
u(t,0,y)=\mu(t,y),\qquad (t,y)\in B_{T}=(0,T)\times (0,L),
\end{equation}
and boundary conditions for $(t,x)\in \Omega_{T,+}=(0,T)\times\mathbb R_+$ of one of the following four types: 
\begin{equation}\label{1.4}
\begin{split}
\mbox{whether}\qquad &a)\mbox{ } u(t,x,0)=u(t,x,L)=0,\\
\mbox{or}\qquad &b)\mbox{ } u_y(t,x,0)=u_y(t,x,L)=0,\\
\mbox{or}\qquad &c)\mbox{ } u(t,x,0)=u_y(t,x,L)=0,\\
\mbox{or}\qquad &d)\mbox{ } u \mbox{ is an  $L$-periodic function with respect to $y$.}
\end{split}
\end{equation}
The notation "problem \eqref{1.1}--\eqref{1.4}" is used for each of these four cases. We consider global solutions, so $T$ is an arbitrary positive number. The main results of the paper are related to well-posedness in classes of smooth solutions and to internal regularity of solutions. Weak and less regular solutions to the considered problem were previously studied in \cite{F18-1}.

ZK equation for the first time was derived in \cite{ZK} in the three-dimensional case for description of  ion-acoustic waves in magnetized plasma. In the considered two-dimensional case this equation is known now as a model of two-dimensional nonlinear waves in dispersive media, propagating in one preassigned ($x$) direction with deformations in the transverse ($y$) direction. A rigorous derivation of the ZK model can be found, for example, in \cite{H-K, LLS}. It is one of the variants of multi-dimensional generalizations of Korteweg--de~Vries equation (KdV) $u_t+bu_x+u_{xxx}+uu_x=0$.

For ZK equation there is a lot of literature, devoted to the initial value and initial-boundary value problems, where the variable $y$ is considered on the whole line (see, for example, bibliography in \cite{F18-1, F18-2} and recent papers \cite{K, Sh}). In particular, global well-posedness in Sobolev spaces $H^k$ of arbitrary large regularity to the initial value problem and the initial-boundary value problem, posed on $\mathbb R^2_+ = \{(x,y): x>0\}$ and $(0,1)\times \mathbb R$, was established (see, for example, \cite{F95, F08}). 

Initial-boundary value problems, where $y$ varies in a bounded interval, are less studied, however, from the physical point of view they seem at least the same important (\cite{LPS, STW, BF13, LT, L13, MP, DL, F15-2, F18-1, F18-2}). For example, there are no results on existence of  global solutions in Sobolev spaces with any prescribed regularity. In the present paper we obtain the corresponding results but only for the problems with boundary conditions of the cases b) and d). 

In comparison with such results, the gain of internal regularity of solutions does not depend on the type of boundary conditions. Starting with solutions, constructed in \cite{F18-1}, we establish results on any prescribed internal regularity depending on the properties of the initial function, in particular, on its decay rate as $x\to +\infty$. Note that for KdV equation first similar results were obtained in \cite{KF}. Gain of internal regularity of weak solutions to the initial value problem for ZK equation in the two-dimensional case was studied in \cite{FA, AF15}, for the initial-boundary value problem posed on $\mathbb R^2_+$ --- in \cite{AF18}. In the three-dimensional case for the initial value problem certain results on internal regularity of solutions were recently established in \cite{LP}.

Notation, used in the present paper, in many respects repeats the one from \cite{F18-1}. 
In what follows (unless stated otherwise) $i$, $j$, $k$, $l$, $m$, $n$ mean non-negative integers, $p\in [1,+\infty]$, $s\in\mathbb R$.  For any multi-index $\alpha=(\alpha_1,\alpha_2)$ let $\partial^\alpha =\partial^{\alpha_1}_{x}\partial^{\alpha_2}_{y}$, 
\begin{equation*}
|D^k\varphi|=\Bigl(\sum_{|\alpha|= k}(\partial^\alpha \varphi)^2\Bigr)^{1/2}, \qquad
|D\varphi|=|D^1\varphi|.
\end{equation*}
Let $L_{p,+}=L_p(\Sigma_+)$, $W_{p,+}^k=W_p^k(\Sigma_+)$, $H^s_+=H^s(\Sigma_+)$.

Introduce special function spaces taking into account boundary conditions \eqref{1.4}. Let 
$\Sigma= \mathbb R\times (0,L)$, 
$\widetilde{\EuScript S}(\overline{\Sigma})$ be a space of infinitely smooth on $\overline{\Sigma}$ functions $\varphi(x,y)$, such that $\displaystyle{(1+|x|)^n|\partial^\alpha\varphi(x,y)|\leq c(n,\alpha)}$ for any $n$, multi-index $\alpha$, $(x,y)\in \overline{\Sigma}$ and $\partial_y^{2m}\varphi\big|_{y=0} =\partial_y^{2m}\varphi\big|_{y=L}=0$ in the case a), $\partial_y^{2m+1}\varphi\big|_{y=0} =\partial_y^{2m+1}\varphi\big|_{y=L}=0$ in the case b), $\partial_y^{2m}\varphi\big|_{y=0} =\partial_y^{2m+1}\varphi\big|_{y=L}=0$ in the case c), $\partial_y^{m}\varphi\big|_{y=0} =\partial_y^{m}\varphi\big|_{y=L}$ in the case d) for any $m$.

Let $\widetilde H^s$ be the closure of $\widetilde{\EuScript S}(\overline{\Sigma})$ in the norm $H^s(\Sigma)$ and $\widetilde H_+^s$ be the restriction of $\widetilde H^s$ on $\Sigma_+$.

It is easy to see, that $\widetilde H^0_+=L_{2,+}$; for $j \geq 1$ in the case a) $\widetilde H^j_+=\{\varphi\in H^j_+: \partial_y^{2m}\varphi|_{y=0}=\partial_y^{2m}\varphi|_{y=L}=0, \ 2m<j\}$, in the case b) $\widetilde H^j_+=\{\varphi\in H^j_+: \partial_y^{2m+1}\varphi|_{y=0}=\partial_y^{2m+1}\varphi|_{y=L}=0,\ 2m+1<j\}$,  in the case d) $\widetilde H^j_+=\{\varphi\in H^j_+: \partial_y^{m}\varphi|_{y=0}=\partial_y^{m}\varphi|_{y=L}, \ m<j\}$.  

We also use an anisotropic Sobolev space $\widetilde H^{(0,k)}_+$ which is defined as the restriction on $\Sigma_+$ of a space $\widetilde H^{(0,k)}$, where the last space is the closure of $\widetilde{\EuScript S}(\overline{\Sigma})$ in the norm $\sum\limits_{m=0}^k \|\partial_y^m\varphi\|_{L_2(\Sigma)}$.

We say that $\rho(x)$ is an admissible weight function, if $\rho$ is an infinitely smooth positive function on $\overline{\mathbb R}_+$, such that $|\rho^{(j)}(x)|\leq c(j)\rho(x)$ for each natural $j$ and all $x\geq 0$. Note that such a function satisfies an inequality $\rho(x) \leq ce^{c_0 x}$ for certain positive constants $c_0$, $c$ and all $x\geq 0$. Any exponent $e^{2\alpha x}$ as well as $(1+x)^{2\alpha}$ are admissible weight functions. 

For an admissible weight function $\rho(x)$ let $\widetilde H^{k,\rho(x)}_+$ be a space of functions $\varphi(x,y)$, such that $\varphi\rho^{1/2}(x)\in \widetilde H^k_+$ (similar definitions for $\widetilde H^{(0,k),\rho(x)}_+$, $H^{k,\rho(x)}_+$). Let $L_{2,+}^{\rho(x)}= \widetilde H^{0,\rho(x)}_+= \{\varphi(x,y): \varphi\rho^{1/2}(x)\in L_{2,+}\}$. 

Let $\Pi_T^+ = (0,T)\times\Sigma_+$. Introduce the following spaces, in which we  consider solutions. 

\begin{definition}\label{D1.1} Let $X^{k,\rho(x)}(\Pi_T^+)$ for an admissible weight function $\rho(x)$, such that $\rho'(x)$ is also an admissible weight function, be a space of functions $u(t,x,y)$, such that  
\begin{equation}\label{1.5}
\partial_t^m u\in C([0,T]; \widetilde H_+^{k-3m,\rho(x)})\cap L_2(0,T;\widetilde H_+^{k-3m+1,\rho'(x)}).
\end{equation}
\end{definition}

Let $\psi_l(y)$, $l=1,2\dots$, be the orthonormal in $L_2(0,L)$ system of the eigenfunctions for the operator $(-\psi'')$ on the segment $[0,L]$ with corresponding boundary conditions   $\psi(0)=\psi(L)=0$ in the case a), $\psi'(0)=\psi'(L)=0$ in the case b), $\psi(0)=\psi'(L)=0$ in the case c), $\psi(0)=\psi(L),\psi'(0)=\psi'(L)$ in the case d), $\lambda_l$ be the corresponding eigenvalues. Such systems are well-known and can be written in trigonometric functions.

For description of properties of the boundary data $\mu$ introduce anisotropic functional spaces. Let $B=\mathbb R^t \times (0,L)$. Define the functional space $\widetilde{\EuScript S}(\overline{B})$ similarly to $\widetilde{\EuScript S}(\overline{\Sigma})$, where the variable $x$ is substituted by $t$. Let $\widetilde H^{s/3,s}(B)$ be the closure of $\widetilde{\EuScript S}(\overline{B})$ in the norm $H^{s/3,s}(B)$. 

More exactly, for any  $\mu\in \widetilde{\EuScript S}(\overline{B})$, $\theta\in\mathbb R$ and $l$ let
\begin{equation}\label{1.6}
\widehat\mu(\theta,l) \equiv \iint_B e^{-i\theta t}\psi_l(y)\mu(t,y)\,dtdy.
\end{equation}
Then the norm in $\widetilde H^{s/3,s}(B)$ is defined as $\Bigl(\sum\limits_{l=1}^{+\infty} 
\bigl\| (|\theta|^{2/3}+l^2)^{s/2}\widehat\mu(\theta,l)\bigr\|_{L_2(\mathbb R^\theta)}^2\Bigr)^{1/2}$ and the norm in $\widetilde H^{s/3,s}(I\times (0,L))$ for any interval $I\subset \mathbb R$ as the restriction norm. 

The use of these norm is justified by the following fact. Let $v(t,x,y)$ be the appropriate solution to the initial value problem
$$
v_t+v_{xxx}+v_{xyy}=0,\qquad v\big|_{t=0}=v_0.
$$
Then according to \cite{F08} uniformly with respect to $x\in \mathbb R$
\begin{equation}\label{1.7}
\bigl\|D_t^{1/3}v\bigr\|_{H_{t,y}^{s/3,s}(\mathbb R^2)}^2+
\bigl\|\partial_x v\bigr\|_{H_{t,y}^{s/3,s}(\mathbb R^2)}^2+
\bigl\|\partial_y v\bigr\|_{H_{t,y}^{s/3,s}(\mathbb R^2)}^2
\sim \|v_0\|_{H^s(\mathbb R^2)}^2
\end{equation}
(here $D^\alpha$ denotes the Riesz potential of the order $-\alpha$).

In \cite{F18-1} the following result on global well-posedness was established.

\begin{theorem}\label{T1.1}
Let $u_0\in \widetilde H^{3,\rho(x)}_+$ for an admissible weight function $\rho(x)$, such that $\rho'(x)$ is also an admissible weight function and $\rho'(x)\geq 1 \ \forall x\geq 0$. Let $\mu\in \widetilde H^{4/3,4}(B_T)$ for certain $T>0$, $\mu(0,y)\equiv u_0(0,y)$. Then problem \eqref{1.1}--\eqref{1.4} is well-posed in the space $X^{3,\rho(x)}(\Pi_T^+)$.
\end{theorem}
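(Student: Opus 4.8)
The plan is to reduce the problem to the case of homogeneous data on the lateral side $\{x=0\}$, to solve the resulting nonlinear problem locally in time by a fixed‑point argument built on the linear theory, and then to globalize by means of weighted a~priori estimates. \emph{Reduction.} Using the (co‑)trace theory attached to \eqref{1.7} together with the compatibility condition $\mu(0,\cdot)\equiv u_0(0,\cdot)$, one first constructs a lift $\Psi$, smooth and rapidly decaying in $x$ and of the $t$‑regularity allowed by $\mu$, with $\Psi\in X^{3,\rho(x)}(\Pi_T^+)$, $\Psi|_{x=0}=\mu$, $\Psi|_{t=0}$ agreeing with $u_0$ near $x=0$, and satisfying the boundary conditions \eqref{1.4} in $y$. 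Setting $u=v+\Psi$, the new unknown solves $v_t+bv_x+v_{xxx}+v_{xyy}+\partial_x\bigl(\Psi v+\frac12 v^2\bigr)=f$ with a forcing $f$ depending only on $\Psi$, with initial data in $\widetilde H^{3,\rho(x)}_+$, with $v|_{x=0}=0$ (hence also $v_y|_{x=0}=0$), and with the conditions \eqref{1.4}; it suffices to solve this problem in $X^{3,\rho(x)}(\Pi_T^+)$.

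\emph{Linear theory and local existence.} For the linearized problem $v_t+bv_x+v_{xxx}+v_{xyy}=F$, $v|_{x=0}=0$, with \eqref{1.4}, expand $v=\sum_l v_l(t,x)\psi_l(y)$; since $\psi_l''=-\lambda_l\psi_l$, each coefficient solves an Airy‑type equation $v_{l,t}+(b-\lambda_l)v_{l,x}+v_{l,xxx}=F_l$ on $\mathbb R_+$ with $v_l(t,0)=0$, for which solvability, uniqueness and the weighted estimates underlying \eqref{1.7} are provided by \cite{F08}; summing over $l$ yields the linear solution in the scale \eqref{1.5}. The map sending $v$ to the linear solution with right‑hand side $f-\partial_x\bigl(\Psi v+\frac12 v^2\bigr)$ is then a contraction on a ball of $X^{3,\rho(x)}(\Pi_{T'}^+)$ for $T'$ small, because $\widetilde H^{3}_+\hookrightarrow C^1(\overline{\Sigma_+})$ in two space dimensions, so the quadratic term is locally Lipschitz with a factor vanishing as $T'\to 0$. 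This produces a unique local regular solution.

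\emph{A~priori estimates.} The crucial point is to bound the $X^{3,\rho(x)}(\Pi_T^+)$‑norm on the whole interval. Multiplying the equation for $v$ by $2v\rho(x)$ and integrating over $\Sigma_+$ produces, besides $\frac{d}{dt}\iint v^2\rho$, the dissipative term $\iint(3v_x^2+v_y^2)\rho'(x)$ and the nonnegative boundary term $\rho(0)\int v_x^2(t,0,y)\,dy$; all other traces on $\{x=0\}$ vanish because $v|_{x=0}=v_y|_{x=0}=0$ — this is exactly why the lift $\Psi$ was subtracted — while the remaining terms carry $\rho''',\dots$ and are dominated since $\rho'$, hence $\rho'',\rho''',\dots$, is admissible and, as one checks, $\rho'\le C\rho$. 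The nonlinear contribution reduces after integration by parts to integrals of type $\iint u\,u_x\,v\,\rho$ and $\iint u^2 v_x\rho$, controlled by $\|u\|_{L_{\infty,+}}\iint(v^2+v_x^2+v_y^2)\rho'$ plus lower‑order terms via Gagliardo--Nirenberg, and $\iint\rho f^2$ is integrable in $t$. This gives the bounds in $C([0,T];L_{2,+}^{\rho(x)})$ and $L_2(0,T;\widetilde H^{1,\rho'(x)}_+)$. Applying the same multiplier method to $v_t$ (which satisfies an analogous equation, again with zero trace on $\{x=0\}$ since $\Psi_t|_{x=0}=\mu_t\in\widetilde H^{1/3,1}(B_T)$) yields the $\partial_t$‑part of \eqref{1.5}; finally, from the pointwise‑in‑$t$ identity $\partial_x(\partial_x^2+\partial_y^2)u=-u_t-bu_x-uu_x$ one recovers $u\in C([0,T];\widetilde H^{3,\rho(x)}_+)\cap L_2(0,T;\widetilde H^{4,\rho'(x)}_+)$ by solving, for each eigenmode, the corresponding linear ODE in $x$ with its value at $x=0$ and the decay condition, the low modes (e.g.\ $\lambda_1=0$ in case b)) needing a little extra care.

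\emph{Globalization and uniqueness.} The previous estimate is only conditional through the factor $\|u\|_{L_{\infty,+}}$. To remove it, take first $\rho\equiv 1$: the $L_2(\Sigma_+)$‑norm grows at most exponentially (the $\{x=0\}$‑contributions for $v$ have the right sign, the rest is controlled by $\|\mu\|_{\widetilde H^{4/3,4}(B_T)}$ and $\|u_0\|$), and then the $\widetilde H^1_+$‑norm is globally controlled by estimating $\iint u^3\le C\|u\|_{L_{2,+}}^2\,\bigl\|\,|Du|\,\bigr\|_{L_{2,+}}$, so that the functional $\iint\bigl(|Du|^2-\frac13 u^3\bigr)$, which obeys a differential inequality with right‑hand side controlled by the data, stays bounded on $[0,T]$. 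The $\widetilde H^3$‑level estimate then closes on every $[0,T]$ as well: the coefficients $\|u\|_{L_{\infty,+}}$ and $\bigl\|\,|Du|\,\bigr\|_{L_{\infty,+}}$ entering it are bounded by $\|u\|_{\widetilde H^4_+}$, which by the previous step is square‑integrable in $t$ — the weighted local smoothing supplies precisely the missing integrability, turning a quadratic Gronwall inequality into a linear one. Uniqueness and continuous dependence on $(u_0,\mu)$ follow from the same weighted $L_2$‑estimate applied to the difference of two solutions. The main obstacle, in my view, is twofold: organizing the traces on $\{x=0\}$ so that no more than the admissible regularity $\mu\in\widetilde H^{4/3,4}$ is used — which is what forces the whole anisotropic apparatus and the identity \eqref{1.7} — and closing the nonlinear higher‑order estimate globally, for which the weighted local‑smoothing term produced in the a~priori estimates is indispensable.
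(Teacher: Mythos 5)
Your proposal is essentially the approach of the paper and of \cite{F18-1}, where Theorem~\ref{T1.1} is actually proved (the present paper only cites it and reuses the same machinery in Sections~\ref{S2}--\ref{S3} for Theorem~\ref{T1.2}): lifting $\mu$ by a boundary-potential-type function built on the eigenfunction expansion in $y$ and the linear estimates behind \eqref{1.7}, a contraction in $X^{3,\rho(x)}$ for the zero-trace problem, and globalization via the weighted multiplier identities of type \eqref{2.4}--\eqref{2.5} together with the interpolation inequality \eqref{1.10}, the $L_2$/$H^1$ conservation-type bounds and the $\rho'$-smoothing term that makes the higher-order Gronwall inequality linear. Minor imprecisions (e.g.\ invoking $\|u\|_{H^4_+}$ where $\|u\|_{H^3_+}$, square-integrable in $t$ by the lower-level smoothing, is what is needed for $\||Du|\|_{L_{\infty,+}}$, and the non-vanishing initial-trace term in the bilinear estimate, harmless for differences of iterates) do not affect the scheme.
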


Now introduce the following auxiliary functions for compatibility conditions of the higher orders on the boundary data.

\begin{definition}\label{D1.2}
Let $\Phi_0(x,y)\equiv u_0(x,y)$ and for $m\geq 1$
\begin{multline}\label{1.8}
\Phi_m(x,y) \equiv -
(\partial_x^3+\partial_x\partial_y^2+b\partial_x)\Phi_{m-1}(x,y)\\-
\sum\limits_{l=0}^{m-1} {\binom {m-1}l} \Phi_l(x,y)\partial_x\Phi_{m-l-1}(x,y).
\end{multline}
\end{definition}

The first main theorem of the present paper is the following result on global well-posedness in the classes of regular solutions.

\begin{theorem}\label{T1.2}
Let the types b) or d) of boundary conditions \eqref{1.4} are considered. Let $u_0\in \widetilde H_+^{k,\rho(x)}$, $\mu \in \widetilde H^{(k+1)/3,k+1}(B_T)$ for certain $T>0$, natural $k\geq 4$, such that $k=3i$ or $k=3i+1$, $i\in\mathbb N$, and an admissible weight function $\rho(x)$, such that $\rho'(x)$ is also an admissible weight function and $\rho'(x)\geq 1 \ \forall x\geq 0$. Let $\partial^m_t \mu(0,y)\equiv \Phi_m(0,y)$ for $0\leq m<k/3$. Then problem \eqref{1.1}--\eqref{1.4} is well-posed in the space $X^{k,\rho(x)}(\Pi_T^+)$.
\end{theorem}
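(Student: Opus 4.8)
The plan is to argue by induction on $i$ (equivalently on $k$ in steps of three), using Theorem~\ref{T1.1} as the base case $i=1$ (i.e. $k=3$, $k=4$) and upgrading regularity by differentiating the equation in $t$. Suppose the statement holds for $k-3$; given data $(u_0,\mu)$ with the regularity and compatibility hypotheses at level $k$, Theorem~\ref{T1.1} (or the inductive hypothesis) already produces a solution $u\in X^{k-3,\rho(x)}(\Pi_T^+)$, and it remains to show $u$ actually lies in $X^{k,\rho(x)}(\Pi_T^+)$. First I would introduce $w=\partial_t u$ (and, when $k=3i+1$, also treat one extra spatial derivative separately). Formally $w$ solves the linearized ZK equation $w_t+bw_x+w_{xxx}+w_{xyy}+(uw)_x=0$ with $w|_{t=0}=\Phi_1(x,y)$, $w|_{x=0}=\partial_t\mu$, and the same type b) or d) boundary conditions in $y$; the compatibility condition $\partial_t^m(\partial_t\mu)(0,y)=\Phi_{m+1}(0,y)$ for $m<(k-1)/3$ follows from Definition~\ref{D1.2}. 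The key point is that $\Phi_1=-(\partial_x^3+\partial_x\partial_y^2+b\partial_x)u_0-u_0\partial_x u_0\in\widetilde H_+^{k-3,\rho(x)}$ by the hypothesis $u_0\in\widetilde H_+^{k,\rho(x)}$ together with the algebra/multiplier properties of the weighted spaces (here one uses that $\rho$ is admissible so that $\rho^{1/2}$-weighted $H^{k-3}$ is a module over itself for $k-3\geq 1$, i.e. $k\geq 4$), and that $\partial_t\mu\in\widetilde H^{k/3,k-1}(B_T)\hookrightarrow\widetilde H^{((k-1)+1)/3,\,(k-1)+1}(B_T)$ — the boundary norm loses exactly the right amount under $\partial_t$.

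Next I would set up the linear theory for $w$. Rather than invoke a black box, the natural route is to reprove the a priori estimate behind Theorem~\ref{T1.1} for the linearized equation with variable (but smooth, decaying, and already controlled) coefficient $u$: multiply the equation for $w$ by $2w\rho(x)$ and integrate over $\Sigma_+$. The dispersive terms give, after integration by parts and use of the boundary conditions of type b) or d) (which kill all boundary contributions at $y=0,L$ — this is exactly why these two cases work and a),c) are excluded), a positive term $\int (3\rho'+\ldots)(w_x^2+w_y^2)$ controlling $\|w\|_{\widetilde H_+^{1,\rho'(x)}}$, using $\rho'\geq 1$; the trace at $x=0$ is absorbed by the $\widetilde H^{2/3,2}$-type norm of $\partial_t\mu$ via the smoothing relation \eqref{1.7}; the nonlinear term $\int (uw)_x\, w\rho$ is estimated by $\|u\|_{L_\infty}$ and $\|u_x\|_{L_\infty}$ times $\|w\sqrt\rho\|^2$ plus a small multiple of the $\widetilde H_+^{1,\rho'}$-norm, with $\|u\|_{W_\infty^1}$ already finite since $k-3\geq 1$ gives enough Sobolev embedding for the known solution. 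Gronwall then yields $w\in C([0,T];L_{2,+}^{\rho(x)})\cap L_2(0,T;\widetilde H_+^{1,\rho'(x)})$, and one bootstraps: knowing $\partial_t u=w$ has this regularity, the stationary relation $u_{xxx}+u_{xyy}=-u_t-bu_x-uu_x$ is an elliptic-type (in fact a second-order ODE in $x$ with parameter, after expanding in the basis $\psi_l$) equation from which $u\in C([0,T];\widetilde H_+^{3,\rho(x)})$ improves to $\widetilde H_+^{k,\rho(x)}$ by elliptic regularity in the $(x,y)$ variables, the spatial boundary conditions of type b)/d) being precisely the ones compatible with the even/periodic extension used in defining $\widetilde H^k$. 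Iterating the differentiation $i-1$ times (plus one spatial step in the $k=3i+1$ case) gives $\partial_t^m u\in C([0,T];\widetilde H_+^{k-3m,\rho(x)})\cap L_2(0,T;\widetilde H_+^{k-3m+1,\rho'(x)})$ for all $m\le i$, which is membership in $X^{k,\rho(x)}(\Pi_T^+)$.

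Finally, well-posedness means also uniqueness and continuous dependence; uniqueness is already contained in Theorem~\ref{T1.1} (the $X^{3,\rho(x)}$-solution is unique, and our solution is such a solution), and continuous dependence in the stronger norm follows by the same linearized energy estimate applied to the difference of two solutions, now treating the difference of the nonlinear terms $u^{(1)}u^{(1)}_x-u^{(2)}u^{(2)}_x=(u^{(1)}-u^{(2)})u^{(1)}_x+u^{(2)}(u^{(1)}-u^{(2)})_x$ and propagating the estimate through the same $i$ levels of $t$-differentiation. The main obstacle I anticipate is bookkeeping the compatibility conditions and the weighted-space algebra at each induction step — showing cleanly that $\Phi_m(0,y)$ generated by the recursion \eqref{1.8} matches $\partial_t^m\mu(0,y)$ at exactly the orders the linearized problem needs, and that each $\Phi_m$ lands in the correct weighted Sobolev space $\widetilde H_+^{k-3m,\rho(x)}$ and the correct subspace encoding the b)/d) boundary conditions; once the linear weighted estimate with a variable coefficient is in hand (the dispersive integration by parts is routine given \eqref{1.7}), the rest is a careful but standard induction.
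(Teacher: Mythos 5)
The central gap is your claim that, once $w=\partial_t u$ is controlled, the relation $u_{xxx}+u_{xyy}=-u_t-bu_x-uu_x$ upgrades $u$ from $\widetilde H^{3,\rho(x)}_+$ to $\widetilde H^{k,\rho(x)}_+$ ``by elliptic regularity''. The operator $\partial_x^3+\partial_x\partial_y^2=\partial_x(\partial_x^2+\partial_y^2)$ is not elliptic (its symbol vanishes on the whole plane $\xi=0$), and after expansion in the basis $\psi_l$ the relation is the third-order ODE $\partial_x(\partial_x^2-\lambda_l)\widehat u_l=\widehat g_l$. Since in the scaling of $X^{k,\rho(x)}$ the right-hand side $-u_t-bu_x-uu_x$ carries only $k-3$ derivatives in $y$, the equation can never produce the top-order pure $y$-derivatives: already for $k=6$, obtaining $\partial_y^6u$ would require $\partial_y^4 u_t\in L_{2,+}$, while $u_t$ lies only in $H^3_+$; your bootstrap stalls at $H^{k-1}$ (and even that needs an $x$-antiderivative argument you do not supply). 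Time differentiation plus the equation recovers derivatives containing enough $\partial_x$'s, but not $\partial_t^m\partial_y^{k-3m}u$. This is exactly why the paper's linear Lemma~\ref{L2.3} supplements the estimate \eqref{2.13} on $\partial_t^iu$ (identity \eqref{2.4}) with \emph{separate energy estimates} (identity \eqref{2.5}) applied to $v=\partial_t^m\partial_y^{k-3m-1}u$, giving \eqref{2.14}, and only afterwards recovers $x$-derivatives from the equation via \eqref{1.14}, \eqref{1.15} and induction in $j$. Moreover, it is precisely these $y$-differentiated estimates, not the basic integration by parts, that force the restriction to the cases b) and d): one needs $\partial_y^j(uu_x)$ to satisfy the $y$-boundary conditions up to order $k-2$ (Lemma~\ref{L3.1}, Remark~\ref{R3.1}), which fails for a) and c). Your explanation of the exclusion of a), c) (boundary terms at $y=0,L$ in the $L_2$ identity) cannot be the reason, since Theorem~\ref{T1.1} covers all four cases at the $H^3$ level.

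Two further points. First, your induction hypothesis is well-posedness of the \emph{nonlinear} problem at level $k-3$, whereas $w$ solves the linearized equation $w_t+bw_x+w_{xxx}+w_{xyy}+(uw)_x=0$ with variable coefficient $u$; you cannot invoke the induction hypothesis for it, so a full higher-order linear (or perturbative) theory must be built anyway. That is in essence the paper's route: linear estimates in $X^{k,\rho(x)}$ for right-hand sides in the class $Y^{k,\rho(x)}$ (Lemma~\ref{L2.3}), the bilinear estimate for $(uv)_x$ in $Y^{k,\rho(x)}$ (Lemma~\ref{L3.1}), reduction to zero data at $x=0$ via the boundary potential $\psi=J\eta(2-x)$, a contraction argument on a small time interval, and globalization through Theorem~\ref{T1.1} together with induction on $k$; the formal differentiations are justified by working with the smooth approximate solutions of \cite{F18-1}, not by a posteriori bootstrapping. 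Second, the boundary-data bookkeeping: $\mu\in\widetilde H^{(k+1)/3,k+1}(B_T)$ gives $\partial_t\mu\in\widetilde H^{(k-2)/3,k-2}(B_T)$ (one time derivative costs three units of $s$), which is the correct class for level $k-3$; your claimed inclusion $\widetilde H^{k/3,k-1}\hookrightarrow\widetilde H^{k/3,k}$ goes the wrong way. These two items are repairable, but the missing $y$-derivative energy estimates are an essential part of any proof and cannot be bypassed by the stationary relation.
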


\begin{remark}\label{R1.1}
We mean that the problem is well-posed in $X^{k,\rho(x)}(\Pi_T^+)$, if there exists a unique solution $u(t,x,y)$ in this space and the map $(u_0,\mu) \mapsto u$ is Lipschitz continuous on any ball in the norm of the map $H_+^{k,\rho(x)} \times H^{(k+1)/3,k+1}(B_T)$ into $X^{k,\rho(x)}(\Pi_T^+)$.
\end{remark}

\begin{remark}\label{R1.2}
According to \eqref{1.7} the assumptions on the boundary data $\mu$ are natural. Both the exponential weight $\rho(x)\equiv \frac1{2\alpha}e^{2\alpha x}$, $\alpha>0$, and the power weight $\rho(x)\equiv \frac1{2\alpha}(1+x)^{2\alpha}$, $\alpha\geq 1/2$,  satisfy the hypothesis of the theorem.  
\end{remark}

Introduce certain additional notation to formulate results on internal regularity. For any $x_0\geq 0$ let $\Sigma_{x_0} = (x_0,+\infty)\times (0,L)$, $\Pi_T^{x_0} = (0,T)\times \Sigma_{x_0}$ (then $\Sigma_+=\Sigma_0$, $\Pi_T^+ =\Pi_T^0$). For any $y_0\in [0,L/2)$ let $\Sigma_{x_0,y_0} = (x_0,+\infty)\times (y_0, L-y_0)$, $\Pi_T^{x_0,y_0} = (0,T)\times \Sigma_{x_0,y_0}$ (then $\Sigma_{x_0}=\Sigma_{x_0,0}$, $\Pi_T^{x_0} =\Pi_T^{x_0,0}$).

Let $L_{2,x_0}^{\rho(x)}= \{\varphi(x,y): \varphi\rho^{1/2}(x)\in L_{2,x_0}\}$, $L_{2,x_0,y_0}^{\rho(x)}= \{\varphi(x,y): \varphi\rho^{1/2}(x)\in L_{2,x_0,y_0}\}$ (then $L_{2,+}^{\rho(x)}= L_{2,0}^{\rho(x)}$, $L_{2,x_0}^{\rho(x)}= L_{2,x_0,0}^{\rho(x)}$).

Let $\widetilde H_{x_0}^{k,\rho(x)} = \{\varphi(x,y): \varphi\rho^{1/2}(x)\in \widetilde H_{x_0}^{k}\}$, where $\widetilde H_{x_0}^{k}$ is the restriction of $\widetilde H^{k}$ on $\Sigma_{x_0}$.

Finally, let $X^{k,\rho(x)}(\Pi_T^{x_0})$ for an admissible weight function $\rho(x)$, such that $\rho'(x)$ is also an admissible weight function, be a space consisting of functions $u(t,x,y)$, such that  
$$
\partial_t^m u\in C([0,T]; \widetilde H_{x_0}^{k-3m,\rho(x)})\cap L_2(0,T;\widetilde H_{x_0}^{k-3m+1,\rho'(x)}).
$$

\begin{theorem}\label{T1.3}
Let the hypothesis of Theorem~\ref{T1.1} be satisfied. Let, in addition, $\partial^n_x u_0 \in \widetilde H_{x_0}^{3,\rho(x)}$ for all $x_0>0$ and certain natural $n$, $\rho^{1/2}(x)\leq c\rho'(x)\ \forall x\geq 0$ and certain constant $c$. Consider the unique solution $u(t,x,y)$ to problem \eqref{1.1}--\eqref{1.4} from the space $X^{3,\rho(x)}(\Pi_T^+)$, then $\partial_x^n u\in X^{3,\rho(x)}(\Pi_T^{x_0})$ for all $x_0>0$.
\end{theorem}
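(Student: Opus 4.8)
The plan is to argue by induction on $n$. The case $n=0$ is exactly Theorem~\ref{T1.1} together with the trivial inclusion $X^{3,\rho(x)}(\Pi_T^+)\subset X^{3,\rho(x)}(\Pi_T^{x_0})$. Assume the assertion holds with $n$ replaced by $n-1$, so that $w:=\partial_x^{n-1}u\in X^{3,\rho(x)}(\Pi_T^{x_0})$ for every $x_0>0$; it remains to gain one further $x$-derivative, i.e. to put $v:=\partial_x^n u=w_x$ into $X^{3,\rho(x)}(\Pi_T^{x_0})$ for every $x_0>0$. The function $v$ formally satisfies
\begin{equation*}
v_t+bv_x+v_{xxx}+v_{xyy}+\partial_x^n(uu_x)=0
\end{equation*}
and inherits from $u$ exactly the same boundary conditions on $y=0$ and $y=L$ (differentiation in $x$ does not touch them), so that every integration by parts in $y$ below produces no lateral boundary terms in each of the four cases \eqref{1.4}. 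Since the localization used is supported strictly inside $\{x>0\}$, no boundary terms at $x=0$ arise either; this is why no extra compatibility conditions on $\mu$ appear in the statement. All the formal manipulations will be carried out either on the smooth approximate solutions from the construction of Theorem~\ref{T1.1} in \cite{F18-1}, or, equivalently, on the $x$-difference quotients of $w$ (which are as regular as $w$), the resulting bounds being uniform in the approximation parameter, and then one passes to the limit, identifying the limit with $u$ by uniqueness in $X^{3,\rho(x)}(\Pi_T^+)$.

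The basic tool is a family of weighted and localized energy estimates. Given $0<x_0'<x_0$, fix $\eta=\eta_{x_0',x_0}\in C^\infty(\overline{\mathbb R}_+)$ with $\eta\equiv 0$ on $[0,x_0']$, $\eta\equiv 1$ on $[x_0,+\infty)$, $\eta'\geq 0$, and such that $\eta\rho$ together with $(\eta\rho)'$, $(\eta\rho)''$, $(\eta\rho)'''$ behave like admissible weights where needed. Multiplying the equation for $v$ by $2v\,\eta(x)\rho(x)$ and integrating over $\Sigma_+$, an integration by parts (three times in $x$ in the term with $v_{xxx}$, and in $y$ and then in $x$ in the term with $v_{xyy}$) gives for the dispersive part
\begin{equation*}
\iint_{\Sigma_+}2v(v_{xxx}+v_{xyy})\eta\rho\,dx\,dy
=\iint_{\Sigma_+}\bigl(3v_x^2+v_y^2\bigr)(\eta\rho)'\,dx\,dy-\iint_{\Sigma_+}v^2(\eta\rho)'''\,dx\,dy ,
\end{equation*}
all the boundary terms vanishing ($\eta$ and its derivatives vanish at $x=0$, decay at $x=+\infty$, and the $y$-boundary terms are handled as in Theorem~\ref{T1.1}). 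On $\{x\geq x_0\}$ one has $(\eta\rho)'=\rho'$, so the first integral majorizes a nonnegative quantity $\gtrsim\iint_{\Sigma_{x_0}}|Dv|^2\rho'$, which is precisely the gain of one derivative in the $\rho'$-weighted $L_2$-norm. Every ``error'' term carried by $\eta'$, $\eta''$, $\eta'''$ is supported in the bounded transition layer $(x_0',x_0)$, where all weights are comparable to $1$, and involves $v$, $v_x$ only; hence such terms are bounded in $L_1(0,T)$ by the inductive hypothesis applied on $\Sigma_{x_0'}$ (which controls $w$, and thus $v=w_x$ and $v_x=w_{xx}$, on $\Pi_T^{x_0'}$). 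The term $bv_x$, the term $\iint v^2(\eta\rho)'''$, and the lower-order part of the nonlinearity are absorbed by Gronwall's lemma, using $|\rho^{(j)}|\leq c(j)\rho$ and the hypothesis $\rho^{1/2}(x)\leq c\rho'(x)$; the top-order part of $\partial_x^n(uu_x)$, after one integration by parts against $v\eta\rho$, reduces to terms of the form $\iint(\partial^\beta u)v^2\eta\rho$ with $|\beta|\leq 1$, controlled by $\|u\|_{L_\infty(\Pi_T^+)}+\|Du\|_{L_\infty(\Pi_T^+)}<\infty$ (Sobolev embedding $\widetilde H^3_+\hookrightarrow C_b(\overline{\Sigma_+})$ in dimension two and $u\in X^{3,\rho(x)}(\Pi_T^+)$). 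This yields $v\in L_\infty(0,T;L_{2,x_0}^{\rho(x)})\cap L_2(0,T;\widetilde H_{x_0}^{1,\rho'(x)})$ for every $x_0>0$.

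Next one bootstraps the $(x,y)$-regularity of $v$ from level $0$ up to level $3$: for $1\leq|\alpha|\leq 3$ apply $\partial^\alpha$ to the equation for $v$, multiply by $2\partial^\alpha v\,\eta\rho$ and integrate, each time over a half-strip shrunk by an arbitrarily small amount. The dispersive part again produces a nonnegative term $\gtrsim\iint|D\partial^\alpha v|^2\rho'$ on the region where the cut-off equals $1$; the input for the error terms is the bound on $v$ up to order $|\alpha|$ obtained at the previous step together with the inductive hypothesis on $n$ restricted to the bounded transition layer, and the nonlinear commutator terms (obtained from the Leibniz expansion of $\partial^\alpha\partial_x^n(uu_x)$) are closed by standard bilinear estimates in Sobolev spaces, arranged so that the factor of highest order is always the one absorbed by the dispersive smoothing term while the complementary factor is estimated in $L_\infty$ (directly by $u\in X^{3,\rho(x)}$, or, for large $n$, again by the inductive hypothesis). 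Once level $3$ is reached, the time derivatives are recovered from the equation, $\partial_t^m v=-(\partial_x^3+\partial_x\partial_y^2+b\partial_x)\partial_t^{m-1}v-\partial_t^{m-1}\partial_x^n(uu_x)$, and shown to lie in the spaces required by Definition~\ref{D1.1}; this gives $\partial_x^n u=v\in X^{3,\rho(x)}(\Pi_T^{x_0})$ for all $x_0>0$ and closes the induction.

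I expect the main obstacle to be the bookkeeping forced by the localization: at each stage of the double induction (over $n$ and over the differentiation order $|\alpha|\leq 3$) one must make sure that every term produced by the derivatives of the cut-off involves only quantities already under control on $\Sigma_{x_0'}$ by the inductive hypothesis, which dictates how much the half-strip may be shrunk at each step and forces a careful matching of the weights $\rho$ and $\rho'$ (this is where $\rho^{1/2}\leq c\rho'$ enters). The second delicate point is the top-order nonlinear term $\partial_x^n(uu_x)$ and its derivatives: one needs exactly the right splitting of the Leibniz sum so that the genuinely highest-order factor is always absorbed into the dispersive gain term rather than giving rise to an uncontrolled contribution.
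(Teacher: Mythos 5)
Your argument is sound in outline and rests on the same core device as the paper --- localized weighted energy identities with a cut-off supported away from $x=0$, combined with induction on $n$ --- but it is organized quite differently at the nonlinear step. The paper never differentiates the nonlinear equation: it writes $u=U+\psi$, where $\psi$ is the boundary potential \eqref{3.5} (smooth for $x>0$, property \eqref{3.4}), regards $U$ as a solution of the \emph{linear} problem \eqref{4.2} with forcing $f\equiv uu_x-\widetilde\psi$, and invokes the linear internal-regularity Lemma~\ref{L2.5} (itself obtained from the localized identity \eqref{2.7} applied to $\partial_x^{n+2-j}\partial_y^j u$, $j\le 2$, induction in $n$, and closure from smooth data). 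The only nonlinear work is the single estimate \eqref{4.3}, namely $\partial_x^n(uu_x)\in L_2(0,T;\widetilde H_{x_0}^{2,\rho^2(x)/\rho'(x)})$, in which the worst product $u\,\partial^{\alpha_2}u$ with $|\alpha_2|=n+3$ is handled by the $L_2$-in-time gain already contained in the inductive hypothesis together with $\sup|u\rho/\rho'|<\infty$; this is exactly where the hypothesis $\rho^{1/2}(x)\le c\rho'(x)$ enters, rather than through the diffuse ``matching of weights'' you describe. Your alternative --- differentiate the equation for $v=\partial_x^n u$, test with $2\partial^\alpha v\,\eta\rho$, absorb the top term $u\,\partial_x\partial^\alpha v$ by an integration by parts, bootstrap over $|\alpha|\le 3$, and recover the time derivatives from the equation --- can be made to work and is a legitimate, more hands-on route, but the two points you gloss over are precisely what the paper's reduction buys: (i) the justification layer --- difference quotients of $w=\partial_x^{n-1}u\in X^{3,\rho(x)}(\Pi_T^{x_0'})$ are not regular enough to legitimize the $|\alpha|=3$-level identities (these involve fourth derivatives), so one is thrown back on the smooth approximate solutions of the linear theory and a closure argument, i.e.\ on the framework of Lemmas~\ref{L2.3} and~\ref{L2.5}; and (ii) the nonzero boundary datum $\mu$ --- the smooth approximants of \cite{F18-1} are constructed for the problem with zero data at $x=0$, so to have admissible objects on which to perform your identities one does need the decomposition $u=U+\psi$ and the smoothness of $J$ for $x>0$; the cut-off alone removes boundary terms but does not specify the approximation scheme. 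With these two points repaired (essentially by importing the paper's Section~\ref{S2} machinery), your scheme yields a correct proof; the paper's route avoids all commutator and absorption bookkeeping at the modest price of measuring the forcing in the auxiliary weight $\rho^2(x)/\rho'(x)$.
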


\begin{remark}\label{R1.3}
Both the exponential weight $\rho(x)\equiv \frac1{2\alpha}e^{2\alpha x}$, $\alpha>0$, and the power weight $\rho(x)\equiv \frac1{2\alpha}(1+x)^{2\alpha}$, $\alpha\geq 1$,  satisfy the hypothesis of the theorem.  
\end{remark}

\begin{theorem}\label{T1.4}
Let the hypothesis of Theorem~\ref{T1.3} be satisfied. Let $\rho_j(x)$, $0\leq j\leq n$, be a set of admissible weight functions, such that all $\rho'_j$ are also admissible weight functions, $\rho_0(x)\equiv \rho(x)$, $\rho_j(x) \leq c\bigl(\rho'_j(x) \rho'_{j-1}(x)\bigr)^{1/2}$ for $j\geq 1$, all $x\geq 0$ and a certain positive constant $c$. Let $\partial_x^{m-j} \partial_y^{j+3} u_0 \in L_{2,x_0,y_0}^{\rho_j(x)}$ for all $x_0>0$, $y_0\in (0,L/2)$, $1\leq m\leq n$, $1\leq j\leq m$. Then for all $x_0>0$, $y_0\in (0,L/2)$ and $1\leq j \leq n$
\begin{gather*}
\partial_x^{n-j}\partial_y^{j+3} u \in C\bigl([0,T];L_{2,x_0,y_0}^{\rho_j(x)}\bigr),\\
\partial_x^{n-j}\partial_y^{j+4} u \in L_2\bigl(0,T;L_{2,x_0,y_0}^{\rho'_j(x)}\bigr).
\end{gather*}
\end{theorem}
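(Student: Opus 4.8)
The plan is to derive weighted energy estimates for the derivatives $w=\partial_x^{m-j}\partial_y^{j+3}u$, arranged as a double induction: the outer parameter runs over $m=1,\dots,n$, and for fixed $m$ the inner parameter runs over $j=1,\dots,m$; the assertion of the theorem is the case $m=n$. The level $j=0$ is the input: for every $x_0>0$ and $m'\le n$ the bounds $\partial_x^{m'}\partial_y^3u\in C([0,T];L_{2,x_0}^{\rho(x)})$ and $\partial_x^{m'}\partial_y^4u\in L_2(0,T;L_{2,x_0}^{\rho'(x)})$ are supplied by Theorem~\ref{T1.3} (applied with $m'$ in place of $n$, which is legitimate since the hypotheses on $u_0$ are inherited by lower $x$-derivatives by interpolation in $x$) and, for $m'=0$, by $u\in X^{3,\rho(x)}(\Pi_T^+)$; here $\rho_0\equiv\rho$. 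I also recall that an admissible weight function is positive and, since its derivative is again admissible and hence positive, nondecreasing, so it is bounded below by a positive constant; thus the $\rho_j$ may be normalised to satisfy $\rho_j\ge1$.

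Fix $m$ and $j$ and assume all strictly earlier cases of the double induction. Given $x_0>0$, $y_0\in(0,L/2)$, choose a smooth cut-off $\zeta(x,y)=\zeta_1(x)\zeta_2(y)$ with $0\le\zeta\le1$, $\zeta\equiv1$ on $\Sigma_{x_0,y_0}$, $\supp\zeta\subset\Sigma_{x_0/2,y_0/2}$, $\zeta_1'\ge0$; put $\varphi=\zeta^2\rho_j$. Differentiating \eqref{1.1}, multiplying the equation for $w$ by $2\varphi w$ and integrating over $\Sigma_+$ — the factor $\zeta_2$ kills all boundary terms at $y=0,L$, the factor $\zeta_1$ those at $x=0$, and the decay as $x\to+\infty$ furnished by the weighted a priori bounds removes the rest — yields
\begin{multline*}
\frac{d}{dt}\int_{\Sigma_+}\varphi w^2+3\int_{\Sigma_+}\varphi_xw_x^2+\int_{\Sigma_+}\varphi_xw_y^2\\
=\int_{\Sigma_+}(\varphi_{xxx}+\varphi_{xyy}+b\varphi_x)w^2-2\int_{\Sigma_+}\varphi_yw_xw_y-\int_{\Sigma_+}\varphi w\,\partial_x^{m-j+1}\partial_y^{j+3}(u^2).
\end{multline*}
Since $\varphi_x=2\zeta_1\zeta_1'\zeta_2^2\rho_j+\zeta_1^2\zeta_2^2\rho_j'$ with $\zeta_1'\ge0$ and $\rho_j'>0$, the left-hand terms $3\int\varphi_xw_x^2$ and $\int\varphi_xw_y^2$ are nonnegative and dominate $\int\zeta_1^2\zeta_2^2\rho_j'w_x^2$ and $\int\zeta_1^2\zeta_2^2\rho_j'w_y^2$; the latter is exactly the quantity yielding $\partial_x^{m-j}\partial_y^{j+4}u\in L_2(0,T;L_{2,x_0,y_0}^{\rho_j'(x)})$. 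On the right, in $\int(\varphi_{xxx}+\varphi_{xyy}+b\varphi_x)w^2$ the part in which no derivative hits $\zeta_1$ is $\le c\int\varphi w^2$ by admissibility of $\rho_j$, the parts carrying a derivative of $\zeta_1$ are supported in the bounded strip $\{x_0/2<x<x_0\}$ and those carrying $\partial_y^2(\zeta_2^2)$ in the strips near $y=0$ and $y=L$; on both, $\rho_j$ may be replaced by $c\rho_{j-1}'$ (from $\rho_j\le c(\rho_j'\rho_{j-1}')^{1/2}$ and $\rho_{j-1}'\le c\rho_{j-1}$ one gets $\rho_j\le c'\rho_{j-1}'$), and $w=\partial_x^{m-j}\partial_y^{j+3}u$ is controlled in $L_2(0,T;L_{2,x_0/2,y_0/2}^{\rho_{j-1}'(x)})$ by a strictly earlier step (or by $u\in X^{3,\rho(x)}(\Pi_T^+)$ if $m=1$), so these are finite constants. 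The coupling term is crucial: $\varphi_y=2\zeta_1^2\zeta_2\zeta_2'\rho_j$, and $w_x=\partial_x^{m-j+1}\partial_y^{j+3}u$ coincides with the function controlled in $L_2(0,T;L_{2,\cdot,\cdot}^{\rho_{j-1}'(x)})$ by the step $(m,j-1)$ (or by Theorem~\ref{T1.3} if $j=1$); hence, for small $\varepsilon>0$,
\begin{multline*}
\Bigl|2\int_{\Sigma_+}\varphi_yw_xw_y\Bigr|\le\varepsilon\int_{\Sigma_+}\zeta_1^2\zeta_2^2\rho_j'w_y^2+c_\varepsilon\int_{\Sigma_+}\frac{\zeta_1^2(\zeta_2')^2\rho_j^2}{\rho_j'}w_x^2\\
\le\varepsilon\int_{\Sigma_+}\zeta_1^2\zeta_2^2\rho_j'w_y^2+c'\|w_x\|_{L_2(0,T;L_{2,x_0/2,y_0/2}^{\rho_{j-1}'(x)})}^2,
\end{multline*}
where $\rho_j\le c(\rho_j'\rho_{j-1}')^{1/2}$ was used in the form $\rho_j^2/\rho_j'\le c^2\rho_{j-1}'$; the $\varepsilon$-term is absorbed on the left.

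It remains to estimate the nonlinearity. Expanding $\partial_x^{m-j+1}\partial_y^{j+3}(u^2)$ by the Leibniz rule produces a sum of terms $\int\varphi w\,(\partial_x^a\partial_y^cu)(\partial_x^{m-j+1-a}\partial_y^{j+3-c}u)$. The term containing $\partial_x^{m-j+1}\partial_y^{j+3}u=w_x$ (with $u$ the other factor), after an integration by parts in $x$, and the term containing $w$ and $\partial_xu$ reduce to Gronwall terms of the form $\int\varphi u_xw^2$, $\int\varphi_xu\,w^2$ plus a cut-off error of the already-handled kind. In every remaining term one factor of $u$ has total order at most $2$; placed in $L_\infty$ or $L_4$ via the two-dimensional embeddings $H^2(\Sigma_+)\hookrightarrow L_\infty$ and $\|g\|_{L_4}^2\le c\|g\|_{L_2}\|Dg\|_{L_2}$ (applied to $\zeta$-localised functions, using $u\in X^{3,\rho(x)}(\Pi_T^+)$), while the complementary factor — of total order at most $m+3$ — together with $w$ is put in the appropriate weighted $L_2$, the necessary weight comparisons $\rho_j\le c\rho_{j'}$ and $\rho_j\le c\rho_{j'}'$ for $j'<j$ again following from $\rho_j\le c(\rho_j'\rho_{j-1}')^{1/2}$ and admissibility; these terms are then bounded by quantities controlled at strictly earlier steps of the induction or by $u\in X^{3,\rho(x)}(\Pi_T^+)$, up to an $\varepsilon$-fraction of $\sup_{[0,T]}\int\varphi w^2+\int_0^T\int_{\Sigma_+}\zeta_1^2\zeta_2^2\rho_j'w_y^2\,dt$. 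Collecting the estimates, integrating in $t$, using $\int\varphi w^2\big|_{t=0}=\int\zeta^2\rho_j(\partial_x^{m-j}\partial_y^{j+3}u_0)^2<\infty$ by hypothesis, and applying Gronwall's inequality give $\sup_{[0,T]}\int\zeta^2\rho_jw^2<\infty$ and $\int_0^T\int_{\Sigma_+}\zeta_1^2\zeta_2^2\rho_j'w_y^2\,dt<\infty$, whence the two asserted inclusions once strong continuity in $t$ is recovered in the standard way from weak continuity and continuity of the norm. All of this is first carried out for the smooth approximate solutions used in the construction of the solution in Theorems~\ref{T1.1} and~\ref{T1.3}, for which the integrations by parts and the finiteness of the integrals are justified, and then one passes to the limit, the bounds being uniform. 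The main difficulty is organisational: one must check that every term of the Leibniz expansion and every cut-off--derivative error is, via the two-dimensional multiplicative inequalities and the weight relations, subordinate to a strictly earlier step of the double induction, so that it closes.
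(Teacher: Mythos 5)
Your linear skeleton is sound and is in fact the same mechanism as the paper's: the paper proves exactly your localized identity (its Lemma~\ref{L2.6}, obtained from \eqref{2.6} with $v\equiv\partial_x^{n-j}\partial_y^{j+3}u\,\psi_{y_0}$ and weight $\rho_j\eta_{x_0}$), absorbs the $y$-cut-off cross term through $\rho_j^2/\rho_j'\leq c\rho_{j-1}'$, and runs the induction on $j$ with the base $j=0$ supplied by Theorem~\ref{T1.3} (estimate \eqref{2.19}). The genuine gap is in your treatment of the nonlinearity. The claim ``in every remaining term one factor of $u$ has total order at most $2$'' is false as soon as $n\geq 2$: the Leibniz expansion of $\partial_x^{m-j+1}\partial_y^{j+3}(u^2)$ has total order $m+4\geq 6$, so it contains products in which \emph{both} factors have order $\geq 3$ (for $n=2$, $j=1$, e.g. $(\partial_y^3u)(\partial_x^2\partial_y u)$ or $(\partial_x\partial_y^2u)^2$), and these are precisely the terms your pairing (one factor in $L_\infty$ or $L_4$ ``using $u\in X^{3,\rho(x)}$'', the complementary factor paired with $w$ in weighted $L_2$) does not reach: $X^{3,\rho(x)}$ only controls derivatives of order $\leq 3$ in $C_t$ and $\leq 4$ in $L_2(0,T)$, whereas the factors here can have order up to $m+3$, and putting such a factor in $L_4$ via \eqref{1.10} requires its full gradient in weighted $L_2$, which comes only from earlier steps of the induction (and not all of it does: e.g.\ the pure extra $x$-derivative $\partial_x^{n-j+2}\partial_y^{j+2}u$ is not among the quantities your scheme has estimated). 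The paper closes exactly this point by a separate device: it treats $u$ (after subtracting the boundary potential) as a solution of the \emph{linear} problem with right-hand side $f=uu_x-\widetilde\psi$, and verifies $\partial_x^{m-j}\partial_y^{j+2}(uu_x)\in L_2(0,T;L_{2,x_0,y_0}^{\rho'_{j-1}(x)})$ through the auxiliary $L_4$ bound \eqref{4.4}--\eqref{4.5}, i.e.\ an $L_4\times L_4$ estimate in which \emph{both} high-order factors are placed in weighted $L_4$ using the previously established step-$(j-1)$ bounds together with $\rho_j\leq c\rho'_{j-1}\leq c\rho_{j-1}$. Your argument can be repaired along the same lines, but as written the hardest class of Leibniz terms is unaccounted for.

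A secondary weakness is the justification of the formal computation: you invoke ``smooth approximate solutions used in the construction of the solution'' of the nonlinear problem, but the solution of Theorems~\ref{T1.1}--\ref{T1.2} is produced by a contraction argument, not as a limit of smooth solutions of the nonlinear equation; the paper avoids this by performing the energy identities (and the smooth-approximation-plus-closure step) only for the linear problem (Lemmas~\ref{L2.3}--\ref{L2.6}) and then feeding in $uu_x$ as a known forcing term. Also, your intermediate inequality ``$\rho_j\leq c'(\rho'_{j-1})$ from $\rho_j\leq c(\rho_j'\rho_{j-1}')^{1/2}$ and $\rho_{j-1}'\leq c\rho_{j-1}$'' should instead use $\rho_j'\leq c\rho_j$; the conclusion is correct, but the cited chain does not give it.
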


\begin{remark}\label{R1.4}
Any exponential weight $\rho(x)\equiv \frac1{2\alpha}e^{2\alpha x}$, $\alpha>0$, verifies the hypothesis of the theorem ($\rho_j(x)=\rho(x)\ \forall j$). The power weight $\rho(x) \equiv \frac1{2\alpha}(1+x)^{2\alpha}$ for $\alpha > n$ verifies the hypothesis of the theorem ($\rho_j(x)= \frac1{2\alpha}(1+x)^{2(\alpha-j)}$).
\end{remark}

\begin{remark}\label{R1.5}
Note that Theorems~\ref{T1.3} and~\ref{T1.4} are valid for all types of boundary conditions \eqref{1.4}.
\end{remark}

Now we present one result of large-time decay of small solutions in the weighed $H^3$-norm for the cases a) and c). Here we use only exponential weights. It is based on some ideas from \cite{LT, L13}, where similar results were obtained in the exponentially weighted $L_2$ and $H^1$-norms (see also \cite{F18-1}). For the problem on a rectangle large-time decay of small solutions in $H^2$-norm was established in \cite{L16}. 

\begin{theorem}\label{T1.5}
Let the types a) or c) of boundary conditions \eqref{1.4} are considered. Let $L_0=+\infty$ if $b\leq 0$, and if $b>0$ there exists $L_0>0$, such that in both cases for any $L\in (0,L_0)$ there exist $\alpha_0>0$, $\epsilon_0>0$ and $\beta>0$, such that if $u_0\in \widetilde H_+^{3,\exp(2\alpha x)}$ for $\alpha\in (0,\alpha_0]$, $u_0(0,y)\equiv 0$, $\|u_0\|_{L_{2,+}}\leq\epsilon_0$, $\mu\equiv 0$,  the corresponding unique solution $u(t,x,y$) to problem \eqref{1.1}--\eqref{1.4} from the space $X^{3,exp(2\alpha x)}(\Pi_T^+)$ $\forall T>0$ satisfies an inequality
\begin{equation}\label{1.9}
\|e^{\alpha x}u(t,\cdot,\cdot)\|^2_{H_+^3} + \|e^{\alpha x}u_t(t,\cdot,\cdot)\|^2_{L_{2,+}}\leq ce^{-\alpha\beta t}\qquad \forall t\geq 0,
\end{equation}
where the constant $c$ depends on $b$, $\alpha$, $\beta$, $\|u_0\|_{ H_+^{3,exp(2\alpha x)}}$.
\end{theorem}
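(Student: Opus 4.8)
The plan is to prove \eqref{1.9} as an a priori estimate for the solution $u\in X^{3,\exp(2\alpha x)}(\Pi_T^+)$ supplied by Theorem~\ref{T1.1} (applied with $\rho(x)=\frac1{2\alpha}e^{2\alpha x}$, for which $\rho'(x)=e^{2\alpha x}\ge1$, and $\mu\equiv0$); the regularity of this class, together with the usual approximation of the weight $e^{2\alpha x}$ by bounded admissible weights to legitimise the integrations by parts, makes the manipulations below rigorous, and since the resulting differential inequalities carry constants independent of $T$ the bound propagates to every $t\ge0$. The starting point is the weight-free identity obtained by multiplying \eqref{1.1} by $2u$ and integrating over $\Sigma_+$: using $u|_{x=0}=\mu=0$ and the conditions in $y$ one gets $\frac{d}{dt}\|u(t,\cdot,\cdot)\|_{L_{2,+}}^2+\int_0^Lu_x^2(t,0,y)\,dy=0$, hence $\|u(t,\cdot,\cdot)\|_{L_{2,+}}\le\|u_0\|_{L_{2,+}}\le\epsilon_0$ for all $t$. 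This globally --- not merely initially --- small $L_2$-norm is the quantity that will tame the nonlinearity.

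Next I would derive the weighted $L_2$-estimate for $u$ by multiplying \eqref{1.1} by $2e^{2\alpha x}u$. Since $u|_{x=0}=0$ (whence also $u_y|_{x=0}=0$), $u=0$ on $\{y=0\}$, and $u=0$ or $u_y=0$ on $\{y=L\}$, every boundary contribution at $\{x=0\}$ and on $\{y=0,L\}$ either vanishes or has the favourable sign and may be discarded; since $(e^{2\alpha x})'=2\alpha e^{2\alpha x}$ and $(e^{2\alpha x})'''=8\alpha^3 e^{2\alpha x}$ this leaves, schematically,
\[
\frac{d}{dt}\|e^{\alpha x}u\|_{L_{2,+}}^2+6\alpha\|e^{\alpha x}u_x\|_{L_{2,+}}^2+2\alpha\|e^{\alpha x}u_y\|_{L_{2,+}}^2\le(8\alpha^3+2\alpha b)\|e^{\alpha x}u\|_{L_{2,+}}^2+\tfrac{4\alpha}{3}\!\iint_{\Sigma_+}\!e^{2\alpha x}u^3.
\]
The term $(8\alpha^3+2\alpha b)\|e^{\alpha x}u\|_{L_{2,+}}^2$ is absorbed into $2\alpha\|e^{\alpha x}u_y\|_{L_{2,+}}^2$ through the Poincar\'e inequality $\|e^{\alpha x}u\|_{L_{2,+}}^2\le\kappa(L)\|e^{\alpha x}u_y\|_{L_{2,+}}^2$, valid because $u=0$ on $\{y=0\}$ in cases a) and c), with $\kappa(L)=(L/\pi)^2$, resp.\ $(2L/\pi)^2$: for $b\le0$ this requires only $4\alpha^2\kappa(L)<1$, i.e.\ $\alpha$ small and $L$ arbitrary ($L_0=+\infty$), while for $b>0$ it forces $\kappa(L)<1/(2b)$, i.e.\ $L<L_0(b)$, and then $\alpha\le\alpha_0$. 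The cubic term I would bound by Ladyzhenskaya's inequality $\|e^{\alpha x}u\|_{L_4(\Sigma_+)}^2\le C\|e^{\alpha x}u\|_{L_{2,+}}\bigl(\|e^{\alpha x}u\|_{L_{2,+}}+\|\nabla(e^{\alpha x}u)\|_{L_{2,+}}\bigr)$, pulling out one factor $\|u\|_{L_{2,+}}\le\epsilon_0$, so that it is $\le\frac\alpha2$ times the dissipation plus $C(\epsilon_0+\epsilon_0^2)$ times $\|e^{\alpha x}u\|_{L_{2,+}}^2$; for $\epsilon_0$ small this yields $\frac{d}{dt}\|e^{\alpha x}u\|_{L_{2,+}}^2+\alpha\beta_0\|e^{\alpha x}u\|_{L_{2,+}}^2\le0$, and this lowest-order piece decays with a constant proportional to $\|e^{\alpha x}u_0\|_{L_{2,+}}^2$.

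The higher-order part rests on the observation that $v:=u_t$ solves the \emph{same} equation in divergence form, $v_t+bv_x+v_{xxx}+v_{xyy}+(uv)_x=0$, with the \emph{homogeneous} data $v|_{x=0}=\partial_t\mu=0$ (whence $v_y|_{x=0}=0$) and the same conditions in $y$; multiplying by $2e^{2\alpha x}v$ and repeating the computation gives the analogous inequality, the extra nonlinear contribution $\iint_{\Sigma_+}e^{2\alpha x}(u_x-2\alpha u)v^2$ being estimated by Gagliardo--Nirenberg with the low-norm factor controlled either by $\|u\|_{L_{2,+}}\le\epsilon_0$ or by the (already decaying) lower-order weighted norms. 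The equation then reads $u_{xxx}+u_{xyy}=-(u_t+bu_x+uu_x)=:F$, and pairing $\partial_x\Delta u=F$ with $2e^{2\alpha x}\Delta u$ (using $\Delta u|_{x=0}=u_{xx}|_{x=0}$ and discarding the resulting good boundary term) gives $\|e^{\alpha x}\Delta u\|_{L_{2,+}}\le\alpha^{-1}\|e^{\alpha x}F\|_{L_{2,+}}$; a weighted elliptic estimate for the Laplacian on $\Sigma_+$ with the conditions in $y$ then controls $\|e^{\alpha x}u\|_{H^2_+}$, and one more differentiation of $\partial_x\Delta u=F$ together with elliptic regularity controls the top order, the intermediate weighted norms being recovered by interpolation. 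Collecting everything into $\mathcal E(t):=\|e^{\alpha x}u(t,\cdot,\cdot)\|_{H^3_+}^2+\|e^{\alpha x}u_t(t,\cdot,\cdot)\|_{L_{2,+}}^2$, combining the inequalities, and --- since over a possibly long transient the lower-order weighted norms are only bounded by quantities depending on $\|u_0\|_{H^{3,\exp(2\alpha x)}_+}$ --- running a Gronwall step there, one arrives at $\frac{d}{dt}\mathcal E+\alpha\beta\,\mathcal E\le0$ for a suitable $\beta>0$ provided $\alpha\le\alpha_0$ and $\epsilon_0$ is small, which is \eqref{1.9} with $c\le C\|u_0\|_{H^{3,\exp(2\alpha x)}_+}^2$.

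The main obstacle I anticipate is the top-order weighted estimate: the derivatives $u_x,u_{xx}$ satisfy no convenient boundary conditions at $\{x=0\}$, so one cannot simply differentiate \eqref{1.1} in $x$ and re-run the energy argument; the detour through the $v=u_t$ equation and the elliptic relation $\partial_x\Delta u=F$ circumvents this, but demands meticulous control of the commutators of the exponential weight with third-order derivatives --- all of lower order and absorbable since $\bigl|(e^{2\alpha x})^{(j)}\bigr|=(2\alpha)^je^{2\alpha x}$ --- and of the edge and corner terms produced on $\{x=0\}$ and $\{y=0,L\}$ when integrating by parts with the weight present. The second delicate point is the smallness bookkeeping: the only small quantity available is $\|u_0\|_{L_{2,+}}$ (the weighted $H^3$-norm may be large, and $c$ grows with it), so each nonlinear term at each order must be split by Gagliardo--Nirenberg into a small constant times the energy plus a small constant times the dissipation, the small constant being furnished either by $\epsilon_0$ or by the already-proved decay of the lower-order weighted norms; this is exactly why the hierarchy must be climbed in order, and why the Poincar\'e inequality in $y$ --- available only in cases a) and c) --- is indispensable.
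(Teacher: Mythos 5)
Your lower-order steps coincide with the paper's (the $L_2$ bound, the weighted $L_2$ identity, the Steklov/Poincar\'e inequality in $y$ available in cases a) and c), the interpolation treatment of the cubic term, and the same choices of $L_0$, $\alpha_0$, $\epsilon_0$, $\beta$), and passing to $v=u_t$ is also the paper's next move (Lemma~\ref{L2.4}, identity \eqref{2.16}). But already there your smallness bookkeeping has a gap: in the term $\iint e^{2\alpha x}(u_x-2\alpha u)u_t^2\,dxdy$ the factor $u_x$ is not a ``low norm''. After the $\varepsilon$-Young splitting one is left with a coefficient of $\iint u_t^2\rho\,dxdy$ of the form $c\bigl(\|u\|_{H_+^1}^4+\|u\|_{H_+^1}\bigr)$ (cf.\ \eqref{5.10}), which must be beaten by the Poincar\'e margin $c_0/L^2-2b-8\alpha^2$; neither $\|u_0\|_{L_{2,+}}\leq\epsilon_0$ nor the weighted $L_2$ decay you have established at that point controls $\|u(t)\|_{H_+^1}$, which may initially be large, and $\epsilon_0$ is not allowed to depend on $\|u_0\|_{H_+^{3,\exp(2\alpha x)}}$. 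The paper closes this by first invoking the exponential decay of the weighted $H^1$ norm, estimate \eqref{5.8} quoted from \cite{F18-1}, and then choosing $T_0$ so that the coefficient is small for $t\geq T_0$; your outline neither proves nor quotes this $H^1$ decay, although it is also indispensable later, since your source term $F=-(u_t+bu_x+uu_x)$ contains $u_x$ with the weight.

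The more serious problem is the elliptic recovery of the top order. Pairing $\partial_x\Delta u=F$ with $2e^{2\alpha x}\Delta u$ does give $\|e^{\alpha x}\Delta u\|_{L_{2,+}}\leq\alpha^{-1}\|e^{\alpha x}F\|_{L_{2,+}}$ and hence (modulo corner regularity for the mixed condition of case c)) a weighted $H^2$ bound, but ``one more differentiation'' does not yield $H^3$: differentiating the relation in $x$ or in $y$ brings in $u_{tx}$ or $u_{ty}$, and your $u_t$-estimate controls $Du_t$ only in the time-integrated sense, not pointwise in $t$ with the weight; the relation itself controls only the single combination $u_{xxx}+u_{xyy}$, so the individual third derivatives $u_{xxy}$, $u_{xyy}$, $u_{yyy}$ (and consequently $u_{xxx}$) remain uncontrolled, and no elliptic estimate for $\Delta$ can produce them from $\Delta u\in L_2$ alone. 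This is exactly why the paper supplements the $u_t$ estimate with the energy identities \eqref{2.17} for $u_y$ and \eqref{2.18} for $u_{yy}$ (their boundary terms at $x=0$ being absorbed via \eqref{1.11}), obtaining decay of $u_{xy},u_{yy}$ and then of $u_{xyy},u_{yyy}$, and only afterwards recovers $u_{xx}$ and $u_{xxx}$ from $u_{xxx}=-u_t-bu_x-uu_x-u_{xyy}$ by means of \eqref{1.14}, the remaining mixed derivative coming from \eqref{1.15}. Without these additional $y$-derivative energy estimates (or pointwise-in-time weighted control of $u_{tx}$, $u_{ty}$), your scheme does not reach the full weighted $H^3$ norm appearing in \eqref{1.9}.
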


Further, let $\eta(x)$ denotes a cut-off function, namely, $\eta$ is an infinitely smooth non-decreasing function on $\mathbb R$  such that $\eta(x)=0$ when $x\leq 0$, $\eta(x)=1$ when $x\geq 1$, $\eta(x)+\eta(1-x)\equiv 1$.

We drop limits of integration in integrals over the whole half-strip $\Sigma_+$.

\medskip

In our study we use the following interpolating inequality from \cite{F18-1}.
If $\rho_1(x)$, $\rho_2(x)$ are two admissible weight functions, such that $\rho_1(x)\leq c_0\rho_2(x)$ $\forall x\geq 0$ for some constant $c_0>0$, then there exists a constant $c>0$, such that for every function $\varphi(x,y)$, satisfying $|D\varphi|\rho_1^{1/2}(x)\in L_{2,+}$, $\varphi\rho_2^{1/2}(x)\in L_{2,+}$,
\begin{equation}\label{1.10}
\bigl\| \varphi\rho_1^{1/4}(x)\rho_2^{1/4}(x)\bigr\|_{L_{4,+}} \leq c
\bigl\| |D\varphi|\rho_1^{1/2}(x)\bigr\|^{1/2}_{L_{2,+}}
\bigl\| \varphi\rho_2^{1/2}(x)\bigr\|^{1/2}_{L_{2,+}}
+c\bigl\|\varphi\rho_2^{1/2}(x)\bigr\|_{L_{2,+}}.
\end{equation}
If $\varphi\big|_{y=0}=0$ or $\varphi\big|_{y=L}=0$, then the constant $c$ in \eqref{1.10} is uniform with respect to $L$.

We also use the following obvious  interpolating inequalities: 
\begin{equation}\label{1.11}
\int_0^L \varphi^2\big|_{x=0}\,dy \leq c \Bigl(\iint \varphi_x^2\rho'\,dxdy\Bigr)^{1/2}
\Bigl(\iint \varphi^2\rho\,dxdy\Bigr)^{1/2}+
c\iint \varphi^2\rho\,dxdy
\end{equation}
and
\begin{equation}\label{1.12}
\|\varphi\rho^{1/2} \|_{L_{\infty,+}} \leq c \|\varphi\|_{H^{2,\rho(x)}_+}
\end{equation}
(the constants $c$ depend on the properties of an admissible weight function $\rho$).

For the decay results we need Steklov inequalities in the following form: 
\begin{equation}\label{1.13}
\int_0^L \psi^2(y)\,dy \leq \frac{\sigma L^2}{\pi^2} \int_0^L \bigl(\psi'(y)\bigr)^2\,dy,
\end{equation}
where $\sigma=1$ if $\psi\in H_0^1(0,L)$, $\sigma=4$ if $\psi\in H^1(0,L)$, $\psi\big|_{y=0}=0$.

We also use the following interpolating inequality (see, for example, \cite{F18-1}): for any admissible weight function $\rho(x)$ if $\varphi\in H_+^{1,\rho(x)}$, $\varphi_{xxx} =\varphi_0+ \varphi_{1x}$, where $\varphi_0, \varphi_1 \in L_{2,+}^{\rho(x)}$, then $\varphi\in H_+^{2,\rho(x)}$ and
\begin{equation}\label{1.14}
\|\varphi_{xx}\|_{L_{2,+}^{\rho(x)}} \leq c(\rho)\bigl(\|\varphi_{0}\|_{L_{2,+}^{\rho(x)}} + \|\varphi_{1}\|_{L_{2,+}^{\rho(x)}} +\|\varphi\|_{H_+^{1,\rho(x)}}\bigr).
\end{equation}

It follows from \cite{BIN} and properties of the admissible weight function $\rho(x)$, that
\begin{equation}\label{1.15}
\|\varphi_{xy}\|_{L_{2,+}^{\rho(x)}} \leq c(\rho)\Bigl(\|\varphi_{xx}\|_{L_{2,+}^{\rho(x)}} +\|\varphi_{yy}\|_{L_{2,+}^{\rho(x)}} +\|\varphi\|_{L_{2,+}^{\rho(x)}}\Bigr).
\end{equation}

The paper is organized as follows. Auxiliary linear problems are considered in Section~\ref{S2}.  Section~\ref{S3} is devoted to the well-posedness results for the original problems in regular classes. Results on internal regularity are proved in Section~\ref{S4}. Decay of solutions is studied in Section~\ref{S5}.

\section{Auxiliary linear problems}\label{S2}

Consider an initial-boundary value in $\Pi_T^+$ for a linear equation
\begin{equation}\label{2.1}
u_t+bu_x+u_{xxx}+u_{xyy}=f(t,x,y)
\end{equation}
with initial and boundary conditions \eqref{1.2}--\eqref{1.4}. It was shown in \cite{F18-1}, that weak solutions to this problem are unique in the space $L_2(\Pi_T^+)$.

We introduce certain additional function space. Let $\EuScript S_{exp}(\overline{\Sigma}_+)$ denotes a space of infinitely smooth functions $\varphi(x,y)$ on $\overline{\Sigma}_+$,  such that $e^{n x}|\partial^\alpha\varphi(x,y)|\leq c(n,\alpha)$ for any $n$, multi-index $\alpha$, $(x,y)\in \overline{\Sigma}_+$. 

We start with two simple technical assertions.

\begin{lemma}\label{L2.1}
Let $v\in C^\infty([0,T]; \EuScript S_{exp}(\overline{\Sigma}_+))$ and satisfy the following boundary conditions: $v\big|_{x=0}=0$ and 
\begin{equation}\label{2.2}
\begin{split}
\mbox{either}\quad &1)\mbox{ } v\big|_{y=0}=0\ \mbox{or } v_y\big|_{y=0}=0,\quad
v\big|_{y=L}=0\ \mbox{or } v_y\big|_{y=L}=0,\\
\mbox{or}\qquad &2)\mbox{ } v\big|_{y=0}= v\big|_{y=L},\quad
v_y\big|_{y=0}= v_y\big|_{y=L}.
\end{split}
\end{equation}
Let 
\begin{equation}\label{2.3}
F\equiv v_t +bv_x +v_{xxx} +v_{xyy}.
\end{equation}
Then for any admissible weight function $\rho(x)$
\begin{multline}\label{2.4}
\frac{d}{dt} \iint v^2\rho\,dxdy + \rho(0)\int_0^L v_x^2\big|_{x=0}\,dy +
\iint (3v_x^2+v_y^2-bv^2)\rho'\,dxdy \\ -
\iint v^2\rho'''\,dxdy = 2\iint Fv\rho\,dxdy,
\end{multline}
\begin{multline}\label{2.5}
\frac{d}{dt} \iint (v_x^2+v_y^2)\rho\,dxdy +
\int_0^L (v_{xx}^2\rho +2v_{xx}v_x\rho' -v_x^2\rho'' +bv_x^2\rho)\big|_{x=0}\,dy \\ + \iint (3v_{xx}^2 +4v_{xy}^2 +v_{yy}^2- bv_x^2 -bv_y^2)\rho'\,dxdy -
\iint (v_x^2 +v_y^2)\rho'''\,dxdy \\ =
-2\iint F (v_{xx}\rho +v_x \rho' +v_{yy}\rho)\,dxdy.
\end{multline}
\end{lemma}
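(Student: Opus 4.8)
The plan is to prove both identities by the energy method: multiply the defining relation \eqref{2.3} for $F$ by a suitable weighted multiplier, integrate over $\Sigma_+$, and integrate by parts, tracking every boundary contribution. The regularity and exponential decay built into $v\in C^\infty([0,T];\EuScript S_{exp}(\overline\Sigma_+))$, together with the at most exponential growth of the admissible weight $\rho$, will guarantee that all integrals converge, that every boundary term at $x=+\infty$ vanishes, and that the $t$-derivative may be taken under the integral sign. I would use $v\big|_{x=0}=0$ repeatedly, both directly and through its tangential consequences $v_t\big|_{x=0}=v_y\big|_{x=0}=v_{yy}\big|_{x=0}=0$ (all derivatives not involving $\partial_x$ vanish on $\{x=0\}$); by contrast $v_x\big|_{x=0}$, $v_{xx}\big|_{x=0}$, $v_{xy}\big|_{x=0}$ need not vanish, which is precisely why the integrals over $\{x=0\}$ occur in \eqref{2.4}, \eqref{2.5}.

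For \eqref{2.4} I would multiply \eqref{2.3} by $2v\rho$ and integrate. The term $2\iint v_tv\rho\,dxdy$ gives $\frac{d}{dt}\iint v^2\rho\,dxdy$; one integration by parts in $x$ converts $2b\iint v_xv\rho\,dxdy$ into $-b\iint v^2\rho'\,dxdy$ (the $x=0$ term dropping since $v\big|_{x=0}=0$); two integrations by parts in $x$ convert $2\iint v_{xxx}v\rho\,dxdy$ into $3\iint v_x^2\rho'\,dxdy+\rho(0)\int_0^Lv_x^2\big|_{x=0}\,dy-\iint v^2\rho'''\,dxdy$; and integration by parts in $y$, then in $x$, converts $2\iint v_{xyy}v\rho\,dxdy$ into $\iint v_y^2\rho'\,dxdy$. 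Here the $y$-boundary term $v_{xy}v\rho\big|_{y=0,L}$ vanishes in every case of \eqref{2.2}: via the factor $v$ in the Dirichlet-type cases, via the factor $v_{xy}$ in the Neumann-type cases (since $v_y\big|_{y=0}\equiv0$ in $x$ forces $v_{xy}\big|_{y=0}=0$), and by periodic cancellation in case~2); the residual $x=0$ term $v_y^2\rho\big|_{x=0}$ drops because $v_y\big|_{x=0}=0$. Summing the four contributions yields \eqref{2.4}.

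For \eqref{2.5} I would multiply \eqref{2.3} by $-2(v_{xx}\rho+v_x\rho'+v_{yy}\rho)$ and integrate. The middle term $v_x\rho'$ of the multiplier is chosen precisely to cancel the cross terms $v_xv_t\rho'$ generated when $-2\iint v_tv_{xx}\rho\,dxdy$ is integrated by parts, so that the whole $v_t$-part collapses to $\frac{d}{dt}\iint(v_x^2+v_y^2)\rho\,dxdy$ (again using $v_t\big|_{x=0}=0$ and the vanishing of the $y$-boundary terms). Integration by parts in $x$ in the contributions from $bv_x$, $v_{xxx}v_{xx}$ and $v_{xxx}v_x$ produces the boundary integral $\int_0^L(v_{xx}^2\rho+2v_{xx}v_x\rho'-v_x^2\rho''+bv_x^2\rho)\big|_{x=0}\,dy$ together with the bulk terms $3v_{xx}^2\rho'$, $-bv_x^2\rho'$, $-v_x^2\rho'''$ (once moved to the left-hand side). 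The one nonroutine step is the mixed third-order contributions: I would combine $2\iint v_{xxx}v_{yy}\rho\,dxdy$ with $2\iint v_{xyy}v_{xx}\rho\,dxdy$ by first integrating the latter by parts \emph{in $x$} --- using $v_{yy}\big|_{x=0}=0$ so that no $x=0$ term appears --- whereupon the two cancel up to $-2\iint v_{yy}v_{xx}\rho'\,dxdy$, and a further integration by parts (in $y$, then in $x$, all $y$-boundary terms vanishing by \eqref{2.2}) rewrites this as $-2\iint v_{xy}^2\rho'\,dxdy+\iint v_y^2\rho'''\,dxdy$. Together with $2\iint v_{xyy}v_x\rho'\,dxdy=-2\iint v_{xy}^2\rho'\,dxdy$ and $2\iint v_{xyy}v_{yy}\rho\,dxdy=-\iint v_{yy}^2\rho'\,dxdy$ (by parts in $x$, using $v_{yy}\big|_{x=0}=0$), this supplies the remaining terms $4v_{xy}^2\rho'$, $v_{yy}^2\rho'$, $-v_y^2\rho'''$ of \eqref{2.5}.

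The computations are elementary; all the work is in the bookkeeping of boundary terms. The hard part is to be certain that every $y$-boundary term really does vanish in each of the four cases of \eqref{2.2} (Dirichlet, Neumann, mixed, periodic), and, in \eqref{2.5}, to route the mixed third-order terms through integration by parts in $x$ rather than in $y$: the $y$-route would generate a term $\int_0^L v_{xy}^2\rho\big|_{x=0}\,dy$, which does not appear in \eqref{2.5} and cannot be discarded since $v_{xy}\big|_{x=0}$ need not vanish, so the correct cancellation only emerges along the $x$-route.
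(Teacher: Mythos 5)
Your proposal is correct and follows exactly the paper's own route: the multipliers $2v\rho$ and $-2(v_{xx}\rho+v_x\rho'+v_{yy}\rho)$ coincide with the paper's $2v\rho$ and $-2\bigl((v_x\rho)_x+v_{yy}\rho\bigr)$, and your integrations by parts, boundary-term bookkeeping at $x=0$ and $y=0,L$, and the cancellations producing the $4v_{xy}^2\rho'$, $v_{yy}^2\rho'$ and $-v_y^2\rho'''$ terms all check out. (Only your closing remark is overstated: handling the mixed terms via integration by parts in $y$ also works, since the resulting $\int_0^L v_{xy}^2\rho\big|_{x=0}\,dy$ contributions cancel between the two mixed terms when the computation is carried through completely.)
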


\begin{proof}
The proof is performed via multiplication of equality \eqref{2.3} correspondingly by $2v\rho$ and $-2\bigl((v_x\rho)_x +v_{yy}\rho\bigr)$ and consequent integration.
\end{proof}

\begin{lemma}\label{L2.2}             
Let $v\in C^\infty([0,T]; \EuScript S_{exp}(\overline{\Sigma}_+))$ and satisfy boundary conditions \eqref{2.2}. Then for any admissible weight function $\rho(x)$, any $x_0>0$ and $\eta_{x_0}(x) \equiv \eta\bigl((2x-x_0)/x_0\bigr)$
\begin{multline}\label{2.6}
\frac{d}{dt} \iint v^2\rho\eta_{x_0}\,dxdy + 
\iint (3v_x^2+v_y^2-bv^2)(\rho\eta_{x_0})'\,dxdy \\ -
\iint v^2(\rho\eta_{x_0})'''\,dxdy = 2\iint Fv\rho\eta_{x_0}\,dxdy,
\end{multline}
\begin{multline}\label{2.7}
\frac{d}{dt} \iint (v_x^2+v_y^2)\rho\eta_{x_0}\,dxdy +
\iint (3v_{xx}^2 +4v_{xy}^2 +v_{yy}^2- bv_x^2 -bv_y^2)(\rho\eta_{x_0})'\,dxdy  \\ -
\iint (v_x^2 +v_y^2)(\rho\eta_{x_0})'''\,dxdy =
-2\iint F (v_{xx}\rho\eta_{x_0} +v_x (\rho\eta_{x_0})' +v_{yy}\rho\eta_{x_0})\,dxdy,
\end{multline}
where the function $F$ is given by formula \eqref{2.3}.
\end{lemma}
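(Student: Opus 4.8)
\textbf{Proof proposal for Lemma~\ref{L2.2}.}

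The plan is to mimic the proof of Lemma~\ref{L2.1}, but now multiplying equality \eqref{2.3} by the localized weights $2v\rho\eta_{x_0}$ for \eqref{2.6} and by $-2\bigl((v_x\rho\eta_{x_0})_x + v_{yy}\rho\eta_{x_0}\bigr)$ for \eqref{2.7}, and then integrating over $\Sigma_+$. The key point is that $\eta_{x_0}(x) = \eta\bigl((2x-x_0)/x_0\bigr)$ vanishes identically for $x\leq x_0/2$; in particular it vanishes together with all its derivatives in a neighbourhood of $x=0$. Consequently every boundary term at $x=0$ that appeared in \eqref{2.4}--\eqref{2.5} — namely $\rho(0)\int_0^L v_x^2|_{x=0}\,dy$ and $\int_0^L(v_{xx}^2\rho + 2v_{xx}v_x\rho' - v_x^2\rho'' + bv_x^2\rho)|_{x=0}\,dy$ — is now multiplied by $\eta_{x_0}(0)=0$ and therefore drops out. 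This is exactly why \eqref{2.6}--\eqref{2.7} have no $x=0$ contributions.

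First I would verify that the product $\rho\eta_{x_0}$ is smooth on $\overline{\mathbb R}_+$, bounded together with all derivatives on the support issues (it need not be an admissible weight function in the strict sense, since $\eta_{x_0}$ is not positive everywhere, but all the integrations by parts below only use that $\rho\eta_{x_0}\in C^\infty(\overline{\mathbb R}_+)$ with all derivatives of at most exponential growth, and that it and its derivatives vanish near $x=0$). Then, for \eqref{2.6}, multiply \eqref{2.3} by $2v\rho\eta_{x_0}$ and integrate: the term $2\iint v_t v\rho\eta_{x_0} = \frac{d}{dt}\iint v^2\rho\eta_{x_0}$; the term $2b\iint v_x v\rho\eta_{x_0} = -b\iint v^2(\rho\eta_{x_0})'$; the term $2\iint v_{xxx}v\rho\eta_{x_0}$, after integrating by parts three times in $x$ (no boundary contributions, since $\rho\eta_{x_0}$ vanishes near $x=0$ and $v\in\EuScript S_{exp}$ decays as $x\to+\infty$), yields $3\iint v_x^2(\rho\eta_{x_0})' - \iint v^2(\rho\eta_{x_0})'''$; finally $2\iint v_{xyy}v\rho\eta_{x_0}$, integrating by parts once in $x$ and once in $y$, gives $\iint v_y^2(\rho\eta_{x_0})'$ — here one uses boundary conditions \eqref{2.2} in $y$ to kill the $y$-boundary terms (in case 1) each of $v$ and $v_y$ vanishes at $y=0$ and at $y=L$, so the relevant product vanishes; in case 2) the periodicity of $v$ and $v_y$ makes the $y=0$ and $y=L$ terms cancel). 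Collecting these identities gives \eqref{2.6}.

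For \eqref{2.7} I would multiply \eqref{2.3} by $-2\bigl((v_x\rho\eta_{x_0})_x + v_{yy}\rho\eta_{x_0}\bigr)$, which is the same multiplier structure as in \eqref{2.5} with $\rho$ replaced by $\rho\eta_{x_0}$, and integrate. The algebra is identical to that of Lemma~\ref{L2.1}: the $v_t$ term produces $\frac{d}{dt}\iint (v_x^2+v_y^2)\rho\eta_{x_0}$ after integration by parts (using that $v|_{x=0}$-type and $y$-boundary terms vanish — the $x=0$ ones because of $\eta_{x_0}$, the $y$-ones because of \eqref{2.2}); the $v_x$, $v_{xxx}$, $v_{xyy}$ terms produce the stated $(\rho\eta_{x_0})'$ and $(\rho\eta_{x_0})'''$ integrands. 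The only real bookkeeping is to confirm once more that all boundary terms at $x=0$ carry a factor of $\eta_{x_0}$ or one of its derivatives evaluated at $0$, hence vanish — this is where the cut-off does its job — and that all boundary terms at $y=0,L$ vanish by \eqref{2.2}, exactly as in Lemma~\ref{L2.1}. Thus the main (and essentially only) obstacle is the careful tracking of the $x=0$ boundary terms through the three $x$-integrations by parts in the $v_{xxx}$ contribution of \eqref{2.7}, to see that each one is proportional to a derivative of $\rho\eta_{x_0}$ at $x=0$ and therefore drops; once that is checked, \eqref{2.7} follows by the same collection of terms as in \eqref{2.5}.
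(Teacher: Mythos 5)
Your proposal is correct and follows exactly the paper's own argument: multiply \eqref{2.3} by $2v\rho\eta_{x_0}$ and by $-2\bigl((v_x\rho\eta_{x_0})_x+v_{yy}\rho\eta_{x_0}\bigr)$, integrate, and use that $\eta_{x_0}$ vanishes near $x=0$ (killing the $x=0$ boundary terms of \eqref{2.4}--\eqref{2.5}) together with \eqref{2.2} for the $y$-boundary terms. Your additional bookkeeping of the integrations by parts is accurate and simply spells out what the paper leaves implicit.
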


\begin{proof}
The proof is performed via multiplication of equality \eqref{2.3} correspondingly by $2v\rho\eta_{x_0}$ and $-2\bigl((v_x\rho\eta_{x_0})_x +v_{yy}\rho\eta_{x_0}\bigr)$ and consequent integration.
\end{proof}

Now we introduce the function spaces to describe properties of the right side of equation \eqref{2.1}.

\begin{definition}\label{D2.1}
Let $\rho(x)$ be an admissible weight function, such that $\rho'(x)$ is also admissible. 
For $k=3i$, $i\geq 0$, define a space $Y^{k,\rho(x)}(\Pi_T^+)$, consisting of functions $f(t,x,y)$, such that $f\equiv f_0 +f_{1x}$, where
\begin{gather*}
\partial_t^i f_0 \in L_1(0,T;L_{2,+}^{\rho(x)}), \quad
\partial_t^i f_1 \in L_2(0,T;L_{2,+}^{\rho^2(x)/\rho'(x)}), \\
\partial_t^m f \in L_2(0,T; \widetilde H_+^{(0,k-3m-1), \rho^2(x)/\rho'(x)})\quad \mbox{for}\ m\leq i-1\quad \mbox{if}\ i\geq 1,\\
f\in X^{k-3,\rho(x)}(\Pi_T^+)\quad \mbox{if}\ i\geq 1,
\end{gather*}
endowed with the natural norm. For $k=3i+1$, $i\geq 0$, define a space $Y^{k,\rho(x)}(\Pi_T^+)$, consisting of functions $f(t,x,y)$, such that
\begin{gather*}
\partial_t^m f \in L_2(0,T; \widetilde H_+^{(0,k-3m-1), \rho^2(x)/\rho'(x)})\quad \mbox{for}\ m\leq i,\\
f\in X^{k-3,\rho(x)}(\Pi_T^+)\quad \mbox{if}\ i\geq 1,
\end{gather*}
endowed with the natural norm.
\end{definition}

\begin{definition}\label{D2.2}
Let $\widetilde\Phi_0(x,y)\equiv u_0(x,y)$ and for $m\geq 1$
\begin{equation}\label{2.8}
\widetilde\Phi_m(x,y) \equiv \partial_t^{m-1} f(0,x,y) -
(\partial_x^3+\partial_x\partial_y^2+b\partial_x)\widetilde\Phi_{m-1}(x,y).
\end{equation}
\end{definition}

\begin{lemma}\label{L2.3}
Let $\rho(x)$ be an admissible weight function such that $\rho'(x)$ is also admissible. Let either $k=3i$ or $k=3i+1$, $i\geq 0$, $u_0\in \widetilde H_+^{k,\rho(x)}$, $\mu\equiv 0$, $f\in Y^{k,\rho(x)}(\Pi_T^+)$, $\widetilde\Phi_m(0,y)\equiv 0$ for $m< k/3$. Then there exists a unique solution to problem \eqref{2.1}, \eqref{1.2}--\eqref{1.4} $u\in X^{k,\rho(x)}(\Pi_T^+)$ and for any $t_0\in (0,T]$
\begin{equation}\label{2.9}
\|u\|_{X^{k,\rho(x)}(\Pi_{t_0}^+)} \leq c(T)\left( \|u_0\|_{H_+^{k,\rho(x)}}+ 
\|f\|_{Y^{k,\rho(x)}(\Pi_{t_0}^+)}\right).
\end{equation}
\end{lemma}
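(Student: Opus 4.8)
The plan is to establish the estimate \eqref{2.9} a priori for sufficiently smooth solutions and then pass to the limit; uniqueness is already known in $L_2(\Pi_T^+)$ from \cite{F18-1}, so the main work is existence together with the bound. I would argue by induction on $i$, where $k=3i$ or $k=3i+1$. The base case $i=0$ (so $k=0$ or $k=1$) is exactly the regularity level covered by the already-established theory in \cite{F18-1} (the weak/less regular solutions), so nothing new is needed there beyond citing that construction and the corresponding linear estimates. For the inductive step, assume the assertion holds for $i-1$ and prove it for $i$.

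\textbf{Inductive step.} First I would differentiate equation \eqref{2.1} in $t$: setting $w=\partial_t u$, $w$ solves the same linear equation with right side $\partial_t f$, initial data $w|_{t=0}=\widetilde\Phi_1$ (which vanishes on $\{x=0\}$ by hypothesis when $i\geq 1$), and homogeneous boundary data $\partial_t\mu\equiv 0$. By Definition~\ref{D2.1} the function $\partial_t f$ lies in $Y^{k-3,\rho(x)}(\Pi_T^+)$, and $\widetilde\Phi_1\in\widetilde H_+^{k-3,\rho(x)}$ with the right compatibility conditions $\widetilde\Phi_{m+1}(0,y)\equiv 0$ for $m<(k-3)/3$; hence the induction hypothesis applies to $w$ and gives $\partial_t u\in X^{k-3,\rho(x)}(\Pi_T^+)$ together with the estimate. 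Likewise, since $f\in X^{k-3,\rho(x)}(\Pi_T^+)$ (the last clause of Definition~\ref{D2.1}) and $u_0\in\widetilde H_+^{k,\rho(x)}\subset\widetilde H_+^{k-3,\rho(x)}$, the induction hypothesis gives $u\in X^{k-3,\rho(x)}(\Pi_T^+)$. What remains is to upgrade from $k-3$ to $k$ regularity in the elliptic (spatial) variables: one must show $u(t,\cdot)\in\widetilde H_+^{k,\rho(x)}$ with $\|u\|_{C([0,t_0];\widetilde H_+^{k,\rho(x)})}\lesssim$ data, and $u\in L_2(0,t_0;\widetilde H_+^{k+1,\rho'(x)})$. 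This is done by viewing \eqref{2.1} as a stationary relation: $u_{xxx}+u_{xyy} = f - u_t - bu_x =: g$, where the right side is already controlled at level $k-3$ in $C([0,t_0];\widetilde H_+^{k-3,\rho(x)})$ and $L_2(0,t_0;\widetilde H_+^{k-2,\rho'(x)})$. One then recovers the missing spatial derivatives of $u$ from $g$: the $\partial_t$-independent estimates come from the elliptic-type regularity for the operator $\partial_x(\partial_x^2+\partial_y^2)$ on the half-strip, using \eqref{1.14} to recover $u_{xx}$ from a decomposition of $u_{xxx}$, \eqref{1.15} to recover $u_{xy}$, the boundary/Steklov-type inequalities \eqref{1.11}--\eqref{1.13} to handle the trace on $\{x=0\}$ (where $u|_{x=0}=\mu\equiv 0$), and the $y$-regularity coming from the anisotropic spaces $\widetilde H_+^{(0,k-3m-1),\rho^2/\rho'}$ built into the definition of $Y^{k,\rho(x)}$. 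Iterating these one-derivative gains, combined with $\partial_t u$ controlled at order $k-3$, closes the full $X^{k,\rho(x)}$ norm. In parallel, the energy identities \eqref{2.4}--\eqref{2.5} of Lemma~\ref{L2.1}, applied to $v=\partial^\alpha u$ for the relevant multi-indices $\alpha$ (after a standard approximation of $u$ by functions in $C^\infty([0,T];\EuScript S_{exp}(\overline\Sigma_+))$ respecting the boundary conditions \eqref{2.2}), provide the dissipative term $\iint 3v_x^2\rho'\,dxdy$ etc. that yields the $L_2(0,t_0;\widetilde H_+^{k+1,\rho'(x)})$ bound, while Gronwall applied to the $\frac{d}{dt}\iint v^2\rho$ terms (absorbing the $\rho'''$ and $b$ terms using that $\rho'''$ is dominated by $\rho'$ for admissible weights) yields the $C([0,t_0];\widetilde H_+^{k,\rho(x)})$ bound. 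For existence, I would construct the solution either by a Galerkin scheme in $y$ using the eigenfunctions $\psi_l$ (which automatically encode the boundary conditions b), d)) together with a regularization/parabolic-regularization in $x$, or — more economically — by the standard approach of solving the lower-regularity problem via \cite{F18-1} and showing the constructed solution is as regular as claimed via the a priori estimates just described; the compatibility conditions $\widetilde\Phi_m(0,y)\equiv 0$ are exactly what make the successive $\partial_t$-differentiated problems have admissible (vanishing) traces.

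\textbf{Main obstacle.} The delicate point is the recovery of the full set of mixed spatial derivatives up to order $k$ (and $k+1$ in the weighted-$L_2$-in-time sense) from the equation $u_{xxx}+u_{xyy}=g$ on the half-strip, keeping track of the admissible weights and the boundary contributions at $x=0$. Unlike the whole-plane case, one cannot simply use Fourier multipliers; the regularity in $x$ for the third-order operator and the regularity in $y$ for the second-order operator must be combined through the anisotropic interpolation-type inequalities \eqref{1.14}--\eqref{1.15}, and one must verify that the boundary terms generated in \eqref{2.5} (the $\int_0^L(v_{xx}^2\rho+2v_{xx}v_x\rho'-v_x^2\rho''+bv_x^2\rho)|_{x=0}\,dy$ term) are either nonnegative or controllable via \eqref{1.11}; this is where the hypothesis $\rho'(x)\geq 1$ (implicitly, that $\rho'$ is admissible and comparable to the weights appearing) and the structure of $Y^{k,\rho(x)}$ do the real work. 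The bookkeeping across the two congruence classes $k=3i$ and $k=3i+1$ — which differ precisely in whether the ``$f_0+f_{1x}$'' splitting of the top-order term is needed — is the other place where care is required, but it is routine once the one-step gain is in hand.
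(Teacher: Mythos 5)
Your plan is essentially the paper's own proof: the paper works with the smooth solutions from \cite{F18-1} and closes, applying \eqref{2.4} to $v=\partial_t^i u$ (with the $f_0+f_{1x}$ splitting when $k=3i$) and \eqref{2.5} to $v=\partial_t^m\partial_y^{k-3m-1}u$ (controlling the $x=0$ trace via \eqref{1.11} and using the anisotropic part of $Y^{k,\rho(x)}$), and then recovers the $x$- and mixed derivatives inductively from the equation together with \eqref{1.15} --- exactly the ingredients you list. The only differences are organizational (you run the induction on $i$ by applying the lemma to $\partial_t u$, and you label the derivative-recovery step ``elliptic-type'' even though $\partial_x(\partial_x^2+\partial_y^2)$ is not elliptic; the pure $y$-derivatives must indeed come from the energy identities and the anisotropic data, as you yourself note), so the substance coincides with the paper's argument.
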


\begin{proof}
Without loss of generality assume that $u_0\in \widetilde{\EuScript S}(\overline{\Sigma})\cap \EuScript S_{exp}(\overline{\Sigma}_+)$, $f\in C^\infty([0,2T]; \widetilde{\EuScript S}(\overline{\Sigma})\cap\EuScript S_{exp}(\overline{\Sigma}_+))$, $\widetilde\Phi_m(0,y)\equiv 0$ for all $m$ and consider solutions $u \in C^\infty([0,T]; .
\widetilde{\EuScript S}(\overline{\Sigma})\cap\EuScript S_{exp}(\overline{\Sigma}_+))$, constructed in \cite{F18-1}.

Note that
\begin{equation}\label{2.10}
\widetilde\Phi_m = (-1)^m(\partial_x^3+\partial_x\partial_y^2+b\partial_x)^m u_0 +
\sum\limits_{l=0}^{m-1} (-1)^{m-l-1}(\partial_x^3+\partial_x\partial_y^2+b\partial_x)^{m-l-1} \partial_t^l f\big|_{t=0}
\end{equation}
and, thus, for $m\leq k/3$
\begin{equation}\label{2.11}
\|\widetilde\Phi_m\|_{H^{k-3m,\rho(x)}} \leq c \Bigl(\|u_0\|_{H_+^{k,\rho(x)}} +
\sum\limits_{l=0}^{m-1} \|\partial_t^l f\|_{C([0,t_0];H_+^{k-3(l+1),\rho(x)})}\Bigr).
\end{equation}
Moreover, 
\begin{equation}\label{2.12}
\partial_t^m u\big|_{t=0} = \widetilde\Phi_m.
\end{equation}

In the case $k=3i$ apply for $v\equiv \partial_t^i u$ equality \eqref{2.4}, then since $F= \partial_t^i f_0 + \partial_t^i f_{1x}$ and for an arbitrary $\varepsilon>0$
\begin{multline*}
2\iint \partial_t^i f_{1x} v\rho\,dxdy =- 2\iint \partial_t^i f_1 (v\rho)_x\,dxdy \\ \leq
\varepsilon \iint (v_x^2 +v^2)\rho'\,dxdy +c(\varepsilon) \iint (\partial_t^i f_1)^2 \frac{\rho^2}{\rho'}\,dxdy
\end{multline*}
with the use of \eqref{2.11} and \eqref{2.12} one can derive that
\begin{equation}\label{2.13}
\|\partial_t^i u\|_{C([0,t_0];L_{2,+}^{\rho(x)})} + \|\partial_t^i u\|_{L_2(0,t_0;H_+^{1,\rho'(x)})} \leq c(T)\left( \|u_0\|_{H_+^{k,\rho(x)}}+ \|f\|_{Y^{k,\rho(x)}(\Pi_{t_0}^+)}\right).
\end{equation}

Next, for $v\equiv \partial_t^m\partial_y^{k-3m-1} u$, where $m\leq i-1$ in the case $k=3i$ and $m\leq i$ in the case $k=3i+1$, apply equality \eqref{2.5}, then since
\begin{multline*}
-2\iint \partial_t^m\partial_y^{k-3m-1} f (v_{xx}\rho +v_x \rho' +v_{yy}\rho)\,dxdy  \\ \leq \varepsilon \iint (v_{xx}^2 +v_{yy}^2 +v_x^2)\rho'\,dxdy +
c(\varepsilon) \iint (\partial_t^m\partial_y^{k-3m-1} f)^2\frac{\rho^2}{\rho'}\,dxdy
\end{multline*} 
and by virtue of \eqref{1.11}
$$
\int_0^L v_x^2\big|_{x=0}\,dy \leq \varepsilon \iint v_{xx}^2\rho'\,dxdy +
c(\varepsilon)\iint v_x^2\rho\,dxdy
$$
similarly to \eqref{2.13} the following estimate holds:
\begin{multline}\label{2.14}
\|\partial_t^m \partial_y^{k-3m-1} u\|_{C([0,t_0];H_+^{1,\rho(x)})} + 
\|\partial_t^m \partial_y^{k-3m-1} u\|_{L_2(0,t_0;H_+^{2,\rho(x)})} \\ \leq c(T)\left( \|u_0\|_{H_+^{k,\rho(x)}}+ \|f\|_{Y^{k,\rho(x)}(\Pi_{t_0}^+)}\right).
\end{multline}

In particular, the desired result is already proved for $k=0$ and $k=1$ (in fact, it was obtained in \cite{F18-1}). In the next step we assume that for the smaller values of $k$ the result is established and derive with use of induction with respect to $l=i-m$ the following inequality: for $j\leq k-3m$
\begin{multline}\label{2.15}
\|\partial_t^m\partial_x^j\partial_y^{k-3m-j} u\|_{C([0,t_0];L_{2,+}^{\rho(x)})} +
\|\partial_t^m\partial_x^j\partial_y^{k-3m-j} u\|_{L_2(0,t_0;H_+^{1,\rho(x)})} \\
\leq c(T)\left( \|u_0\|_{H_+^{k,\rho(x)}}+ \|f\|_{Y^{k,\rho(x)}(\Pi_{t_0}^+)}\right).
\end{multline}
Note that for $m=i$ and $m<i$, $j\leq 1$ it succeeds from \eqref{2.13} and \eqref{2.14}. If $m<i$ and $j\geq 3$ then equality \eqref{2.1} yields that
$$
\partial_t^m\partial_x^j\partial_y^{k-3m-j} u = \partial_t^m\partial_x^{j-3}\partial_y^{k-3m-j} (f-u_t-bu_x-u_{xyy}).
$$
In particular, we obtain \eqref{2.15} for $j=3$. Application of \eqref{1.15} yields that \eqref{2.15} succeeds for $j=2$. Finally, use induction with respect to $j$.
\end{proof}

The next lemma will be used in the last section.
\begin{lemma}\label{L2.4}
Let the hypothesis of Lemma~\ref{L2.3} be satisfied for $k=3$. Then there exist functions $\nu_1, \nu_2\in L_2(0,T)$, such that for the corresponding solution to problem \eqref{2.1}, \eqref{1.2}--\eqref{1.4} and a.e. $t\in (0,T)$
\begin{multline}\label{2.16}
\frac{d}{dt} \iint u_t^2\rho\,dxdy + \rho(0)\int_0^L \nu_1^2\,dy +
\iint (3u_{tx}^2+u_{ty}^2-bu_t^2)\rho'\,dxdy \\ -
\iint u_t^2\rho'''\,dxdy = 2\iint f_t u_t\rho\,dxdy,
\end{multline}
\begin{multline}\label{2.17}
\frac{d}{dt} \iint (u_{xy}^2+u_{yy}^2)\rho\,dxdy +
\int_0^L (u_{xxy}^2\rho +2u_{xxy}u_{xy}\rho' -u_{xy}^2\rho'' +bu_{xy}^2\rho)\big|_{x=0}\,dy \\ + \iint (3u_{xxy}^2 +4u_{xyy}^2 +u_{yyy}^2- bu_{xy}^2 -bu_{yy}^2)\rho'\,dxdy -
\iint (u_{xy}^2 +u_{yy}^2)\rho'''\,dxdy \\ =
-2\iint f_y (u_{xxy}\rho +u_{xy} \rho' +u_{yyy}\rho)\,dxdy.
\end{multline}
\begin{multline}\label{2.18}
\frac{d}{dt} \iint (u_{xyy}^2+u_{yyy}^2)\rho\,dxdy +
\int_0^L (\nu_2^2\rho +2\nu_2u_{xyy}\rho' -u_{xyy}^2\rho'' +bu_{xyy}^2\rho)\big|_{x=0}\,dy \\ + \iint (3u_{xxyy}^2 +4u_{xyyy}^2 +u_{yyyy}^2- bu_{xyy}^2 -bu_{yyy}^2)\rho'\,dxdy -
\iint (u_{xyy}^2 +u_{yyy}^2)\rho'''\,dxdy \\ =
-2\iint f_{yy} (u_{xxyy}\rho +u_{xyy} \rho' +u_{yyyy}\rho)\,dxdy.
\end{multline}
\end{lemma}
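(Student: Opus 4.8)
The plan is to obtain the three identities \eqref{2.16}--\eqref{2.18} by differentiating the linear equation \eqref{2.1} with respect to the appropriate variables and testing against a suitably weighted multiplier, exactly as in the proofs of Lemmas~\ref{L2.1} and~\ref{L2.3}. Since by Lemma~\ref{L2.3} the solution $u$ belongs to $X^{3,\rho(x)}(\Pi_T^+)$, the function $u_t$ lies in $C([0,T];\widetilde H_+^{0,\rho(x)})\cap L_2(0,T;\widetilde H_+^{1,\rho'(x)})$ and $u$ lies in $C([0,T];\widetilde H_+^{3,\rho(x)})\cap L_2(0,T;\widetilde H_+^{4,\rho'(x)})$; in particular all the spatial derivatives appearing on the left of \eqref{2.16}--\eqref{2.18} are in the stated spaces for a.e.\ $t$, and the right-hand sides make sense because $f\in Y^{3,\rho(x)}(\Pi_T^+)$ supplies $f_t\in L_1(0,T;L_{2,+}^{\rho(x)})$ together with $f_0,f_1$ controlled as in Definition~\ref{D2.1} and $f_y,f_{yy}\in L_2(0,T;L_{2,+}^{\rho^2/\rho'})$. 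As in Lemma~\ref{L2.3}, it suffices to prove the identities for smooth rapidly decaying data and then pass to the limit, so the boundary terms at $x=0$ are legitimate.

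The three identities are produced as follows. For \eqref{2.16}: differentiate \eqref{2.1} in $t$, so $u_t$ solves $(u_t)_t+b(u_t)_x+(u_t)_{xxx}+(u_t)_{xyy}=f_t$ with $u_t\big|_{x=0}=\mu_t\equiv 0$ (here $\mu\equiv 0$), multiply by $2u_t\rho$ and integrate over $\Sigma_+$; this is precisely formula \eqref{2.4} of Lemma~\ref{L2.1} applied to $v\equiv u_t$, $F\equiv f_t$, which gives \eqref{2.16} with $\nu_1\equiv u_{tx}\big|_{x=0}$. For \eqref{2.17}: differentiate \eqref{2.1} in $y$, note that under boundary conditions b) or d) the function $u_y$ again satisfies admissible boundary conditions of type \eqref{2.2} in $y$ (in case b), $u_y\big|_{y=0}=u_y\big|_{y=L}=0$ forces $v=u_y$ to vanish on the lateral boundary; in case d), periodicity is preserved), multiply by $-2\bigl((v_x\rho)_x+v_{yy}\rho\bigr)$ with $v\equiv u_y$, $F\equiv f_y$; this is formula \eqref{2.5} with $v_x\big|_{x=0}=u_{xy}\big|_{x=0}$, $v_{xx}\big|_{x=0}=u_{xxy}\big|_{x=0}$ retained as genuine boundary integrals, giving \eqref{2.17} verbatim. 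For \eqref{2.18}: differentiate \eqref{2.1} twice in $y$ and apply \eqref{2.5} with $v\equiv u_{yy}$, $F\equiv f_{yy}$; the only new point is that $v_{xx}\big|_{x=0}=u_{xxyy}\big|_{x=0}$ need not a priori be an $L_2(0,L)$ function for a.e.\ $t$ (since $u\in X^{3,\rho(x)}$ only gives four spatial derivatives in $L_2$ with weight $\rho'$, and the trace of a fourth derivative at $x=0$ is one order short), so one sets $\nu_2\equiv u_{xxyy}\big|_{x=0}$ and proves $\nu_2\in L_2(0,T)$ separately.

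The main obstacle is exactly the claim $\nu_1,\nu_2\in L_2(0,T)$ and, relatedly, the well-definedness of the boundary traces. For $\nu_1=u_{tx}\big|_{x=0}$: integrating \eqref{2.16} in $t$ over $(0,T)$, every other term is integrable --- the $\frac{d}{dt}$ term by $u_t\in C([0,T];L_{2,+}^{\rho(x)})$, the $\rho'$-gradient term by $u_t\in L_2(0,T;H_+^{1,\rho'(x)})$, the $\rho'''$-term likewise ($\rho'''$ is dominated by $\rho'$ up to a constant by admissibility), and the right side by $f_t\in L_1(0,T;L_{2,+}^{\rho(x)})$ together with, after writing $f_t=\partial_t f_0+\partial_t f_{1x}$ and integrating by parts in $x$, the bounds on $\partial_t f_1$ --- hence $\rho(0)\int_0^T\!\!\int_0^L\nu_1^2\,dy\,dt<\infty$, i.e.\ $\nu_1\in L_2(B_T)\subset L_2(0,T)$ after possibly fixing $y$; more precisely one gets $\nu_1(t,\cdot)\in L_2(0,L)$ for a.e.\ $t$ with $L_2$-in-$t$ norm. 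A cleaner route, mirroring \eqref{2.14} via \eqref{1.11}, is to use the interpolation inequality \eqref{1.11} to absorb the trace $\int_0^L u_{tx}^2\big|_{x=0}\,dy$ into $\varepsilon\iint u_{txx}^2\rho'\,dxdy+c(\varepsilon)\iint u_{tx}^2\rho\,dxdy$, which are finite by Lemma~\ref{L2.3}; this simultaneously shows the trace is well-defined and square-integrable. The same argument applied to \eqref{2.18}, using \eqref{1.11} on $v=u_{xyy}$ so that $\int_0^L u_{xxyy}^2\big|_{x=0}\,dy\leq\varepsilon\iint u_{xxxyy}^2\rho'\,dxdy+c(\varepsilon)\iint u_{xxyy}^2\rho\,dxdy$ --- both finite since for $k=3$ the solution has the regularity needed after one differentiation via \eqref{2.1}, \eqref{1.14}, \eqref{1.15} --- yields $\nu_2\in L_2(0,T)$. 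Once the boundary traces are under control, the identities \eqref{2.16}--\eqref{2.18} follow by the same integration-by-parts bookkeeping as in Lemma~\ref{L2.1}, and the density/limiting argument of Lemma~\ref{L2.3} transfers them from smooth data to the general case.
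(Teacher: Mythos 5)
Your overall strategy is the same as the paper's: in the smooth case the three identities are exactly \eqref{2.4} applied to $v\equiv u_t$ and \eqref{2.5} applied to $v\equiv u_y$ and $v\equiv u_{yy}$ (with $F\equiv f_t$, $f_y$, $f_{yy}$ and $\nu_1\equiv u_{tx}\big|_{x=0}$, $\nu_2\equiv u_{xxyy}\big|_{x=0}$), and the general case is obtained by closure from the smooth approximations of Lemma~\ref{L2.3}. One point of coverage: you verify the lateral boundary compatibility of $v=u_y$, $v=u_{yy}$ only in cases b) and d), but the lemma is stated under the hypotheses of Lemma~\ref{L2.3} for all four types and is used in Section~\ref{S5} precisely in cases a) and c). The compatibility holds there too, because membership in $\widetilde H^3_+$ (or in $\widetilde{\EuScript S}(\overline{\Sigma})$ for the approximations) forces, e.g., $u_{yy}\big|_{y=0}=u_{yy}\big|_{y=L}=0$ in case a), so $v=u_y$ satisfies the option $v_y\big|_{y=0}=v_y\big|_{y=L}=0$ of \eqref{2.2} and $v=u_{yy}$ vanishes on the lateral boundary; the analogous mixed conditions work in case c). This should be checked, not restricted to b), d).

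The genuine gap is your justification that $\nu_2\in L_2$: applying \eqref{1.11} to $\varphi=u_{xyy}$ so as to bound $\int_0^L u_{xxyy}^2\big|_{x=0}\,dy$ requires $u_{xxxyy}(\rho')^{1/2}\in L_{2}$, a fifth spatial derivative that is not controlled by $u\in X^{3,\rho(x)}(\Pi_T^+)$ (nor can it be recovered from \eqref{2.1}, since that would need $u_{tyy}$ and $u_{xyyyy}$); the same objection defeats your ``cleaner route'' for $\nu_1$, because $u_{txx}$ is likewise uncontrolled ($u_t$ lies only in $L_2(0,T;\widetilde H^{1,\rho'(x)}_+)$). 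Indeed, these traces are exactly one derivative beyond the interior regularity, which is the whole reason the lemma states them as functions $\nu_1,\nu_2$ rather than as traces of $u$. The workable argument is the one you sketch for $\nu_1$, and it must be used for both: for the smooth approximations integrate \eqref{2.16} and \eqref{2.18} in time; in \eqref{2.18} the boundary term contains $\nu_2^2\rho(0)$ with a favorable sign, the cross term $2\nu_2 u_{xyy}\rho'\big|_{x=0}$ is absorbed by Cauchy--Schwarz together with the trace estimate \eqref{1.11} applied to $u_{xyy}$ itself (which needs only $u_{xxyy}$ with weight $\rho'$ and $u_{xyy}$ with weight $\rho$, both available), and all interior and right-hand-side terms are bounded by the $X^{3,\rho(x)}$ and $Y^{3,\rho(x)}$ norms. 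This yields uniform $L_2$ bounds on $u_{tx}\big|_{x=0}$ and $u_{xxyy}\big|_{x=0}$ along the approximating sequence; weak convergence then defines $\nu_1,\nu_2$ and carries \eqref{2.16}--\eqref{2.18} to the limit, which is the closure step the paper intends.
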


\begin{proof}
In the smooth case equality \eqref{2.16} coincide with \eqref{2.4} for $v\equiv u_t$, $\nu_1\equiv u_{tx}\big|_{x=0}$, equality \eqref{2.17} --- with \eqref{2.5} for $v\equiv u_{xy}$, equality \eqref{2.18} --- with \eqref{2.5} for $v\equiv u_{xyy}$, $\nu_2\equiv 
u_{xxyy}\big|_{x=0}$ and in the general case is obtained via closure.
\end{proof}

\begin{lemma}\label{L2.5}
Let the hypothesis of Lemma~\ref{L2.3} be satisfied for $k=3$. Assume, in addition, that for all $x_0>0$ and certain natural $n$
$$
\partial_x^n u_0 \in \widetilde H_{x_0}^{3,\rho(x)},\qquad
\partial_x^n f \in L_2(0,T; \widetilde H_{x_0}^{2,\rho^2(x)/\rho'(x)}).
$$
Then $\partial_x^n u \in X^{3,\rho(x)}(\Pi_T^{x_0})$ for all $x_0>0$.
\end{lemma}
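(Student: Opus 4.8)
The plan is to establish internal regularity of $\partial_x^n u$ by induction on $n$, using the localized energy identities of Lemma~\ref{L2.2} together with the a~priori bounds already available from Lemma~\ref{L2.3}. The base case $n=0$ is precisely the conclusion of Lemma~\ref{L2.3} (with $x_0=0$, hence trivially on every $\Sigma_{x_0}$). For the inductive step, suppose $\partial_x^{n-1} u\in X^{3,\rho(x)}(\Pi_T^{x_0})$ for all $x_0>0$; we want the same for $\partial_x^n u$. First I would work at the smooth level (as in the proof of Lemma~\ref{L2.3}, justified by the density/closure argument of \cite{F18-1}), so that all manipulations are legitimate, and then pass to the limit. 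Differentiating equation \eqref{2.1} $n$ times in $x$ shows that $w\equiv\partial_x^n u$ solves the same linear equation with right-hand side $\partial_x^n f$, but it no longer satisfies the homogeneous boundary condition at $x=0$; this is exactly why the cut-off $\eta_{x_0}$ is needed and why Lemma~\ref{L2.2} (rather than Lemma~\ref{L2.1}) is the right tool — on $\Sigma_{x_0}$ the function $w$ still obeys the $y$-boundary conditions \eqref{2.2}, and the weight $\rho\eta_{x_0}$ kills all boundary terms at $x=0$.

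The key steps, in order. (1) Apply \eqref{2.6} of Lemma~\ref{L2.2} to $v\equiv \partial_x^n u$ with $F=\partial_x^n f = \partial_x^{n-1}f_0 + (\partial_x^{n-1}f_1)_x$ (using the splitting $f=f_0+f_{1x}$ from Definition~\ref{D2.1} for $k=3$, so $i=1$); integrating by parts the $f_{1x}$-term against $(v\rho\eta_{x_0})$ and absorbing with an $\varepsilon$-argument the terms $\varepsilon\iint(v_x^2+v^2)(\rho\eta_{x_0})'\,dxdy$, while the terms involving $(\rho\eta_{x_0})'''$ and the factor $\eta_{x_0}'$, $\eta_{x_0}''$, $\eta_{x_0}'''$ — which are supported in the bounded set $\{x_0/2<x<x_0\}$ — are controlled by the inductive hypothesis $\partial_x^{n-1}u\in X^{3,\rho(x)}(\Pi_T^{x_0/2})$ (here one uses that $\rho,\rho'$ are admissible and comparable on a bounded interval, and that $\rho^2/\rho'$ is dominated accordingly, exactly as in Definition~\ref{D2.1}). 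This yields, via Gronwall, control of $\|\partial_x^n u\|_{C([0,t_0];L_{2}^{\rho})}$ on $\Sigma_{x_0}$ and of $\|\partial_x^{n}u\|_{L_2(0,t_0;H^{1,\rho'})}$ on $\Sigma_{x_0}$, for every $x_0>0$. (2) Next apply \eqref{2.7} to $v\equiv\partial_x^n\partial_y^{j} u$ for $j=0,1,2$, using $\partial_x^n f\in L_2(0,T;\widetilde H_{x_0}^{2,\rho^2/\rho'})$, again with the $\varepsilon$-absorption and Gronwall, to obtain $\partial_x^n\partial_y^j u\in C([0,t_0];H^{1,\rho})\cap L_2(0,t_0;H^{2,\rho})$ on $\Sigma_{x_0}$; the mixed derivatives of the form $\partial_x^{n+1}\partial_y^{j}u$ in $L_2(0,T;L_2^{\rho'})$ on $\Sigma_{x_0}$ are read off. (3) Recover the remaining $x$-derivatives: from the equation, $\partial_x^{n+3}u = \partial_x^n(f - u_t - bu_x - u_{xyy})$, which puts $\partial_x^{n+3}u$ and then, by \eqref{1.14}, $\partial_x^{n+2}u$ and, by \eqref{1.15}, $\partial_x^{n+1}\partial_y u$ into the right spaces on $\Sigma_{x_0}$; an induction on the order of the $x$-derivative (exactly as in the last paragraph of the proof of Lemma~\ref{L2.3}, displays \eqref{2.15}) then fills in all components of the $X^{3,\rho(x)}(\Pi_T^{x_0})$-norm. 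Finally (4), pass from the smooth case to the general one by approximation, as in Lemma~\ref{L2.3}.

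The main obstacle is the bookkeeping in step (1)–(2): one must check that \emph{every} term produced by the cut-off — i.e. every occurrence of a derivative of $\eta_{x_0}$ multiplying lower-order derivatives of $u$ — is, on its bounded support $\{x_0/2<x<x_0\}$, already controlled by the inductive hypothesis applied with the smaller parameter $x_0/2$, and simultaneously that the genuinely new highest-order terms are handled purely by the $\varepsilon$-absorption into the coercive terms $3v_x^2+v_y^2$ (resp.\ $3v_{xx}^2+4v_{xy}^2+v_{yy}^2$) carrying the factor $(\rho\eta_{x_0})'\ge 0$ on $x\ge x_0$. The nesting of cut-offs (to bound the $\Sigma_{x_0}$-norm one invokes the hypothesis on $\Sigma_{x_0/2}$) is what makes the induction on $n$ close; one should state this explicitly so that the constants $c(T)$ are seen to depend only on $x_0$, $T$, $\rho$, $n$ and the data norms.
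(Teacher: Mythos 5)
Your overall framework (smooth solutions plus closure, the cut-off $\eta_{x_0}$ with $\supp\eta'_{x_0}\subset[x_0/2,x_0]$, the identities of Lemma~\ref{L2.2}, nesting $\Sigma_{x_0}$ against $\Sigma_{x_0/2}$, induction on $n$) is the right one and matches the paper. The genuine gap is in your step (3). You apply \eqref{2.7} only to $v\equiv\partial_x^{n}\partial_y^{j}u$, $j\le 2$, and then plan to recover the remaining high $x$-derivatives ($\partial_x^{n+2}u$, $\partial_x^{n+3}u$, $\partial_x^{n+2}\partial_y u$, and the corresponding $L_2$-in-time fourth-order terms) from the equation via $\partial_x^{n+3}u=\partial_x^{n}(f-u_t-bu_x-u_{xyy})$ together with \eqref{1.14}, \eqref{1.15}, "as in \eqref{2.15}". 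That recovery worked in Lemma~\ref{L2.3} only because there $f\in Y^{3,\rho(x)}$ includes $f\in X^{0,\rho(x)}(\Pi_T^+)$, i.e. $f$ (and hence $u_t$) is controlled in $C([0,T];L_{2,+}^{\rho(x)})$. Here the additional hypothesis gives only $\partial_x^{n}f\in L_2(0,T;\widetilde H^{2,\rho^2/\rho'}_{x_0})$ — merely $L_2$ in time — and $\partial_t\partial_x^{n}u$ is not estimated by your steps (1)–(2) (trying to get it from the equation is circular, and an energy identity for $\partial_t\partial_x^n u$ would need $\partial_t\partial_x^n f$, which is not assumed). Hence the $C([0,T];L_{2,x_0}^{\rho(x)})$ bounds on $\partial_x^{n+3}u$ and $\partial_x^{n+2}\partial_y u$, i.e. the $C([0,T];\widetilde H^{3,\rho(x)}_{x_0})$ component of the conclusion, cannot be obtained along this route. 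The paper avoids the issue by putting the extra $x$-derivatives on $v$ itself: it applies \eqref{2.7} to $v\equiv\partial_x^{n+2-j}\partial_y^{j}u$, $j\le2$, so that all third-order derivatives $\partial_x^{n+3-j}\partial_y^{j}u$, $j=0,\dots,3$, appear directly inside the time-differentiated energy, and the right-hand sides involve exactly $\partial_x^{n+2-j}\partial_y^{j}f$, $j\le2$, which is precisely what the hypothesis $\partial_x^{n}f\in L_2(0,T;\widetilde H^{2,\rho^2/\rho'}_{x_0})$ controls; this yields estimate \eqref{2.19} and the induction on $n$ closes via the term supported in $(x_0/2,x_0)$ (for $n=1$ it is covered by $u\in L_2(0,T;H^{4,\rho'(x)}_+)$).

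Two smaller points. In step (1) you split $\partial_x^{n}f$ through $f=f_0+f_{1x}$; the hypotheses give no control of $\partial_x^{n}f_0$, $\partial_x^{n}f_1$ separately (and your formula $\partial_x^{n}f=\partial_x^{n-1}f_0+(\partial_x^{n-1}f_1)_x$ is off by one derivative), but this is harmless because the splitting is unnecessary: locally on $x>x_0/2$ one estimates $2\iint \partial_x^{n}f\,v\,\rho\eta_{x_0}\,dxdy$ directly, since $\partial_x^{n}f\in L_2(0,T;L_{2,x_0/2}^{\rho^2(x)/\rho'(x)})$. Repair step (3) by replacing it with the paper's choice of $v$ in \eqref{2.7} and your argument becomes essentially the paper's proof.
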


\begin{proof}
Consider first smooth solutions as in the proof of Lemma~\ref{L2.3}. Apply equality \eqref{2.7} for $v\equiv \partial_x^{n+2-j}\partial_y^j u$, $j\leq 2$, then since $\supp \eta'_{x_0}\subset [x_0/2,x_0]$ similarly to \eqref{2.14} we derive the following inequality:
\begin{multline}\label{2.19}
\sum\limits_{j=0}^3 \|\partial_x^{n+3-j}\partial_y^j 
u\|_{C([0,T];L_{2,x_0}^{\rho(x)})} +
\sum\limits_{j=0}^4 \|\partial_x^{n+4-j}\partial_y^j 
u\|_{L_2(0,T;L_{2,x_0}^{\rho'(x)})} \\ \leq 
c(x_0)\Bigl(\sum\limits_{j=0}^3 \|\partial_x^{n+3-j}\partial_y^j u_0\|_{L_{2,x_0/2}^{\rho(x)}} +
\sum\limits_{j=0}^2 \|\partial_x^{n+2-j}\partial_y^j 
f\|_{L_2(0,T;L_{2,x_0/2}^{\rho^2(x)/\rho'(x)})} \\ +
\sum\limits_{j=0}^3  \|\partial_x^{n+3-j}\partial_y^j 
u\|_{L_2\bigl((0,T)\times (x_0/2, x_0) \times (0,L)\bigr)}\Bigr).
\end{multline}
Note that for $n=1$ the last term in the right side of \eqref{2.19} is already estimated since $u\in L_2(0,T; H_+^{4,\rho'(x)})$. Then induction with respect to $n$ and closure provide the desired result.
\end{proof}

\begin{lemma}\label{L2.6}
Let the hypothesis of Lemma~\ref{L2.5} be satisfied. Let $\rho_j(x)$, $0\leq j\leq n$, be the same set of functions as in the hypothesis of Theorem~\ref{T1.4}. Let for all $x_0>0$, $y_0\in (0,L/2)$, $1\leq m\leq n$, $1\leq j\leq m$
$$
\partial_x^{m-j} \partial_y^{j+3} u_0 \in L_{2,x_0,y_0}^{\rho_j(x)},\quad
\partial_x^{m-j} \partial_y^{j+2} f \in L_2(0,T;L_{2,x_0,y_0}^{\rho'_{j-1}(x)}).
$$
Then for all $x_0>0$, $y_0\in (0,L/2)$ and $1\leq j \leq n$
\begin{gather*}
\partial_x^{n-j}\partial_y^{j+3} u \in C\bigl([0,T];L_{2,x_0,y_0}^{\rho_j(x)}\bigr),\\
\partial_x^{n-j}\partial_y^{j+4} u \in L_2\bigl(0,T;L_{2,x_0,y_0}^{\rho'_j(x)}\bigr).
\end{gather*}
\end{lemma}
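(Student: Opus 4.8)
The plan is to proceed by induction on $j$ (from $1$ to $n$), treating the $y$-derivatives one order at a time, exactly as in the proof of Lemma~\ref{L2.5} but now localizing in $y$ as well as in $x$. As before, I would first argue for smooth solutions $u\in C^\infty([0,T];\widetilde{\EuScript S}(\overline{\Sigma})\cap\EuScript S_{exp}(\overline{\Sigma}_+))$ and obtain the general case by closure. The base of the induction, $j=0$, is precisely the conclusion of Lemma~\ref{L2.5}: $\partial_x^n u\in X^{3,\rho(x)}(\Pi_T^{x_0})$ for all $x_0>0$, which in particular gives $\partial_x^{n+3-i}\partial_y^i u\in C([0,T];L_{2,x_0}^{\rho(x)})$ for $i\le 3$ and $\partial_x^{n+4-i}\partial_y^i u\in L_2(0,T;L_{2,x_0}^{\rho'(x)})$ for $i\le 4$, with no $y$-localization needed yet since the boundary conditions on $y=0,L$ are already built into $\widetilde H$.

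For the inductive step, suppose the conclusion holds up to order $j-1$, i.e.\ $\partial_x^{n-(j-1)}\partial_y^{(j-1)+3}u\in C([0,T];L_{2,x_0,y_0}^{\rho_{j-1}(x)})$ and $\partial_x^{n-(j-1)}\partial_y^{(j-1)+4}u\in L_2(0,T;L_{2,x_0,y_0}^{\rho'_{j-1}(x)})$ for all $x_0>0$, $y_0\in(0,L/2)$. To gain one more $y$-derivative I would introduce a second cut-off $\zeta_{y_0}(y)$, infinitely smooth, supported in $(y_0/2,L-y_0/2)$ and equal to $1$ on $(y_0,L-y_0)$, and apply the energy identity of type \eqref{2.7} to $v\equiv \partial_x^{n+2-j-i}\partial_y^{j+i}u$ for $i\le 2$, but now multiplying also by $\zeta_{y_0}$ — equivalently, differentiate the equation, multiply by $-2\bigl((v_x\rho_j\eta_{x_0}\zeta_{y_0})_x+v_{yy}\rho_j\eta_{x_0}\zeta_{y_0}\bigr)$ and integrate. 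The commutator terms coming from the $y$-cut-off, $\zeta'_{y_0}$ and $\zeta''_{y_0}$, are supported on $(y_0/2,y_0)\cup(L-y_0,L-y_0/2)$ and involve at most $j+3$ derivatives in $y$ (one fewer than the quantity being estimated), hence are controlled by the inductive hypothesis at level $j-1$ applied with $y_0$ replaced by $y_0/2$. The terms with $\eta'_{x_0}$ are handled as in Lemma~\ref{L2.5}: supported in $(x_0/2,x_0)$ and estimated by the previously established regularity on a slightly larger $x$-interval, feeding in an induction on $n$. The right-hand side $f$-term is absorbed by a Cauchy–Schwarz/$\varepsilon$-argument against the $\rho'_j$-weighted dissipative terms, using the hypothesis $\partial_x^{m-j}\partial_y^{j+2}f\in L_2(0,T;L_{2,x_0,y_0}^{\rho'_{j-1}(x)})$ together with the relation $\rho_j\le c(\rho'_j\rho'_{j-1})^{1/2}$, which is exactly what makes the weight $\rho_j^2/\rho'_j$ dominated by $c\,\rho'_{j-1}$, so that $\iint f^2\rho_j^2/\rho'_j$ (more precisely the relevant $\eta_{x_0}\zeta_{y_0}$-localized version) is finite.

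There is one subtlety I would flag: the identity \eqref{2.7} as stated holds for functions satisfying the homogeneous $y$-boundary conditions \eqref{2.2}, which hold for $\partial_y^{2m}v$ or $\partial_y^{2m+1}v$ of the original $u$; after inserting the cut-off $\zeta_{y_0}$, the boundary conditions at $y=0,L$ become irrelevant because $\zeta_{y_0}$ vanishes there, so the integration by parts in $y$ produces no boundary contributions on $y=0,L$ at all — this is in fact what makes the interior estimate work uniformly for all four boundary types (cf.\ Remark~\ref{R1.5}). The main obstacle is bookkeeping: organizing the double induction (outer on $j$, inner on $n$ within each step, plus the descending induction on the number of $x$-derivatives as in the end of Lemma~\ref{L2.3} to pass from $\partial_y$-dominated to mixed derivatives) so that at each stage every commutator term genuinely involves strictly fewer derivatives — or the same derivatives on a strictly larger domain already controlled — than the quantity being estimated, and verifying that the chain of weight inequalities $\rho_j\le c(\rho'_j\rho'_{j-1})^{1/2}$ closes the estimate. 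Once the a priori bound analogous to \eqref{2.19}, now with the extra factor $\zeta_{y_0}$ and weights $\rho_j$, is in place for smooth solutions, the general case follows by the same closure argument used throughout Section~\ref{S2}.
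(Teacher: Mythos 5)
Your overall strategy (localized weighted energy estimates with cut-offs in $x$ and $y$, induction on $j$ with base given by Lemma~\ref{L2.5}, a supplementary induction on $n$, and closure from smooth solutions) is the same as the paper's, but the concrete implementation has a genuine gap in the treatment of the commutators produced by the $y$-cut-off. You apply the $H^1$-level identity of type \eqref{2.7} to $v\equiv\partial_x^{n-j}\partial_y^{j+2}u$ with $\zeta_{y_0}$ inserted into the multiplier, and you claim that all resulting commutator terms ``involve at most $j+3$ derivatives in $y$'' and are therefore controlled by the level-$(j-1)$ hypothesis. This is not true: the cross pairings $v_t\cdot v_{yy}\,\rho_j\eta_{x_0}\zeta_{y_0}$ and $v_{xyy}\cdot v_y\,\rho_j\eta_{x_0}\zeta'_{y_0}$ (after one integration by parts in $y$, or after replacing $v_t$ via the equation) produce terms containing $\partial_x^{n-j+1}\partial_y^{j+4}u$ --- that is $j+4$ derivatives in $y$ with one extra $x$-derivative --- and, depending on the order of integrations by parts, also terms like $\partial_x^{n-j+3}\partial_y^{j+2}u$ with $n+5$ total derivatives; none of these is covered by the inductive hypothesis at level $j-1$ (which only controls $\partial_x^{n-j+1}\partial_y^{j+3}u$ in $L_2$ with weight $\rho'_{j-1}$) nor by Lemma~\ref{L2.5}. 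To close the estimate one must integrate by parts once more so that the bad product becomes $\partial_x^{n-j+1}\partial_y^{j+3}u\cdot\partial_x^{n-j}\partial_y^{j+4}u$ times the cut-off derivative, absorb the second factor with $\varepsilon$ into the \emph{current} dissipative term (weight $\rho'_j$ times the cut-off), and estimate the first factor with weight $\rho_j^2/\rho'_j\leq c\rho'_{j-1}$; moreover, for this absorption the $y$-cut-off must have its derivative dominated by a constant times its square root (e.g.\ be the square of a smooth cut-off), since the dissipation carries the cut-off itself while the commutator carries its derivative. Your proposal identifies neither the offending terms nor this absorption mechanism, so as written the inductive step does not close.

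For comparison, the paper sidesteps all of this by a lighter device: it applies the zeroth-order identity \eqref{2.6} (not \eqref{2.7}) to $v\equiv\partial_x^{n-j}\partial_y^{j+3}u\,\psi_{y_0}$ with the cut-off placed \emph{inside} $v$, so the dissipation automatically carries $\psi_{y_0}^2$ and the only dangerous commutator is $\partial_x^{n-j+1}\partial_y^{j+4}u\,\psi'_{y_0}$ appearing in $F$; pairing with $v$, integrating by parts once in $y$, and splitting as above (with $\rho_j^2/\rho'_j\leq c\rho'_{j-1}$) yields estimate \eqref{2.20}, whose remaining right-hand term $\partial_x^{n-j}\partial_y^{j+3}u$ on the enlarged region is handled through the induction in $n$ (as you anticipated) and in $j$, with base \eqref{2.19}. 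Your route can likely be repaired along these lines, but the repair is exactly the missing idea.
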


\begin{proof}
Again first consider smooth solutions. Let $\psi_{y_0}(y) \equiv \eta\bigl((2y-y_0)/y_0\bigr)\eta\bigl((2L-2y-y_0)/y_0\bigr)$. Apply equality \eqref{2.6} for $v\equiv \partial_x^{n-j}\partial_y^{j+3}u\psi_{y_0}$, $\rho(x)\equiv \rho_j(x)$, $1\leq j\leq n$, then 
$$
F\equiv \partial_x^{n-j}\partial_y^{j+3}f\psi_{y_0} + 
2\partial_x^{n-j+1}\partial_y^{j+4}u\psi'_{y_0} +
\partial_x^{n-j+1}\partial_y^{j+3}u\psi''_{y_0}.
$$
Note that
$$
v_y^2 \geq \frac12 (\partial_x^{n-j}\partial_y^{j+4}u\psi_{y_0})^2 -
(\partial_x^{n-j}\partial_y^{j+3}u\psi'_{y_0})^2,
$$
\begin{multline*}
2\int_0^L Fv\,dy = -2\int_0^L \partial_x^{n-j}\partial_y^{j+2} f (\partial_x^{n-j}\partial_y^{j+3}u\psi^2_{y_0})_y\,dy \\
-4\int_0^L \partial_x^{n-j+1}\partial_y^{j+3}u \partial_x^{n-j}\partial_y^{j+4}u\psi_{y_0} \psi'_{y_0} \\-
\int_0^L \partial_x^{n-j+1}\partial_y^{j+3}u \partial_x^{n-j}\partial_y^{j+3}u
\bigl(4(\psi'_{y_0})^2 +2\psi_{y_0}\psi''_{y_0}\bigr)\,dy,
\end{multline*}
where
\begin{multline*}
-4\iint \partial_x^{n-j+1}\partial_y^{j+3}u \partial_x^{n-j}\partial_y^{j+4}u\psi_{y_0} \psi'_{y_0}\rho_j\eta_{x_0}\,dxdy  \\ \leq \varepsilon
\iint (\partial_x^{n-j}\partial_y^{j+4}u\psi_{y_0})^2 \rho'_j\eta_{x_0}\,dxdy  \\ +
c(\varepsilon) \iint (\partial_x^{n-j+1}\partial_y^{j+3}u\psi'_{y_0})^2 \frac{\rho_j^2}{\rho'_j}\eta_{x_0}\,dxdy.
\end{multline*}
As a result, since $\rho_j^2/\rho'_j \leq c \rho'_{j-1}$
\begin{multline}\label{2.20}
\|\partial_x^{n-j}\partial_y^{j+3} u\|_{C([0,T];L_{2,x_0,y_0}^{\rho_j(x)})} +
\|\partial_x^{n-j}\partial_y^{j+4} u\|_{L_2(0,T;L_{2,x_0,y_0}^{\rho'_j(x)})} \\ \leq
\|\partial_x^{n-j}\partial_y^{j+3} u_0\|_{L_{2,x_0/2,y_0/2}^{\rho_j(x)}} + c(x_0,y_0) \Bigl(\|\partial_x^{n-j}\partial_y^{j+2} f\|_{L_2(0,T;L_{2,x_0/2,y_0/2}^{\rho'_j(x)})} \\+
\|\partial_x^{n-j+1}\partial_y^{j+3} u\|_{L_2(0,T;L_{2,x_0/2,y_0/2}^{\rho'_{j-1}(x)})} +
\|\partial_x^{n-j}\partial_y^{j+3} u\|_{L_2(0,T;L_{2,x_0/2,y_0/2}^{\rho'_j(x)})}\Bigr).
\end{multline}
Note that for $j=0$ the left part of this inequality is estimated in \eqref{2.19} (for $y_0=0$). Therefore, induction with respect to $j$ provides an appropriate estimate on
$\|\partial_x^{n-j}\partial_y^{j+3} u\|_{C\bigl([0,T];L_{2,x_0,y_0}^{\rho_j(x)}\bigr)}$
and $\|\partial_x^{n-j}\partial_y^{j+4} u\|_{L_2\bigl(0,T;L_{2,x_0,y_0}^{\rho'_j(x)}\bigr)}$. Finally, closure finishes the proof.
\end{proof}

\section{Well-posedness in regular classes}\label{S3}

First of all, we establish one bilinear estimate.

\begin{lemma}\label{L3.1}
Let the type b) or d) of boundary conditions \eqref{1.4} is considered. Let $u,v \in X^{k,\rho(x)}(\Pi_T^+)$ for certain $T>0$, natural $k\geq 4$, such that $k=3i$ or $k=3i+1$, $i\in\mathbb N$, and an admissible weight function $\rho(x)$, such that $\rho'(x)$ is also an admissible weight function and $\rho'(x)\geq 1 \ \forall x\geq 0$. Then $(uv)_x\in Y^{k,\rho(x)}(\Pi_T^+)$ and for any $t_0\in (0,T]$
\begin{multline}\label{3.1}
\|(uv)_x\|_{Y^{k,\rho(x)}(\Pi_{t_0}^+)} \\ \leq c(T)t_0^{1/2}\Bigl(\|u\|_{X^{k-2,\rho(x)}(\Pi_{t_0}^+)}\|v\|_{X^{k,\rho(x)}(\Pi_{t_0}^+)} + \|u\|_{X^{k,\rho(x)}(\Pi_{t_0}^+)}\|v\|_{X^{k-2,\rho(x)}(\Pi_{t_0}^+)}\Bigr) \\+
c\sum\limits_{3(m_1+m_2)\leq k-2} \|\partial_t^{m_1} u\big|_{t=0}\|_{H_+^{k-3m_1-2,\rho(x)}} \|\partial_t^{m_2} v\big|_{t=0}\|_{H_+^{k-3m_2-2,\rho(x)}}.
\end{multline}
\end{lemma}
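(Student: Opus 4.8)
The plan is to estimate the bilinear term $(uv)_x = u_x v + u v_x$ in the norm of $Y^{k,\rho(x)}(\Pi_{t_0}^+)$ by distributing derivatives according to the structure of Definition~\ref{D2.1}, handling the $k=3i$ and $k=3i+1$ cases in parallel. Write $f = (uv)_x$. For the case $k=3i$ we must produce a splitting $f = f_0 + f_{1x}$: the natural choice is $f_1 \equiv uv$ and $f_0 \equiv 0$ for the top-order ($\partial_t^i$) part, so that $\partial_t^i f_1 = \partial_t^i(uv)$ must be controlled in $L_2(0,t_0; L_{2,+}^{\rho^2(x)/\rho'(x)})$, while the lower-order temporal derivatives $\partial_t^m f$, $m \le i-1$, are placed in $L_2(0,t_0; \widetilde H_+^{(0,k-3m-1),\rho^2(x)/\rho'(x)})$ and, finally, the requirement $f \in X^{k-3,\rho(x)}(\Pi_{t_0}^+)$ is met by an internal induction on $k$ (the case $k=4$ has $i=1$ so $k-3=1$, and we separately verify $(uv)_x \in Y^{1,\rho(x)}$, which falls under the $k=3i+1$ branch with $i=0$). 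For $k=3i+1$ the space $Y^{k,\rho(x)}$ asks only for $\partial_t^m f \in L_2(0,t_0;\widetilde H_+^{(0,k-3m-1),\rho^2(x)/\rho'(x)})$ for $m \le i$, plus $f \in X^{k-3,\rho(x)}$ when $i\ge1$; the base case $k=1$, $i=0$ is just $(uv)_x \in L_2(0,t_0; L_{2,+}^{\rho^2(x)/\rho'(x)})$.

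The core estimates are of the form: for a product of two factors, each a $y$-derivative and/or $t$-derivative of $u$ or $v$, with total order matching $k-3m-1$ in $y$ (for the anisotropic norm) or the relevant order in $X^{k-3,\rho(x)}$, bound the $L_2(0,t_0; L_{2,+}^{\rho^2/\rho'})$-norm of the product by a constant times $t_0^{1/2}$ times a product of $X$-norms. The mechanism is: put the lower-order factor (at most $k-2$ derivatives, so it lives in $C([0,t_0]; \widetilde H_+^{\ge 2,\rho(x)})$ after accounting, hence in $L_{\infty,+}^{\rho^{1/2}(x)}$ by \eqref{1.12}) in $L_\infty$ in both $t$ and $(x,y)$, absorbing a weight $\rho^{1/2}(x)$; put the higher-order factor in $L_2$, absorbing the remaining weight $\rho^{1/2}(x)$ (here we use $\rho^2/\rho' \le \rho$ since $\rho' \ge 1$, so a single $\rho^{1/2}$ on each factor suffices, and in fact $\rho^2/\rho' \le \rho \le \rho \cdot \rho'$ gives room to spare). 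The factor $t_0^{1/2}$ comes from $\|g\|_{L_2(0,t_0)} \le t_0^{1/2}\|g\|_{L_\infty(0,t_0)}$ applied to the sup-in-time of the low-order factor. Derivative counting must be done carefully: when $u,v \in X^{k,\rho(x)}$ the top data for $\partial_t^{m}$ is $\widetilde H_+^{k-3m,\rho(x)}$ spatially, and one factor may carry as many as $k$ total derivatives — then it goes in $L_2$ and the partner carries at most $k-2$ and goes in $L_\infty$ via \eqref{1.12}; this explains the appearance of $\|u\|_{X^{k-2,\rho(x)}}\|v\|_{X^{k,\rho(x)}}$ and its symmetric counterpart on the right-hand side of \eqref{3.1}.

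The remaining non-$t_0^{1/2}$ sum in \eqref{3.1}, namely $\sum_{3(m_1+m_2)\le k-2}\|\partial_t^{m_1}u|_{t=0}\|_{H_+^{k-3m_1-2,\rho(x)}}\|\partial_t^{m_2}v|_{t=0}\|_{H_+^{k-3m_2-2,\rho(x)}}$, arises when we verify the condition $f \in X^{k-3,\rho(x)}(\Pi_{t_0}^+)$ by the internal induction: membership in $X^{k-3,\rho(x)}$ includes continuity in time into $\widetilde H_+^{k-3-3m,\rho(x)}$, and controlling this $C([0,t_0];\cdot)$-norm (rather than an $L_2(0,t_0;\cdot)$-norm) does not yield a $t_0^{1/2}$ smallness gain — instead one estimates $\sup_t$ of the relevant Sobolev norm of $\partial_t^{m}f$ starting from its value at $t=0$ plus a time-integral, and the $t=0$ contribution is exactly $\partial_t^{m}(uv)_x|_{t=0}$ expanded by the Leibniz rule, each term a product of traces $\partial_t^{m_1}u|_{t=0}$ and $\partial_t^{m_2}v|_{t=0}$; the total spatial order being at most $k-2$ (the "$-2$" again from using \eqref{1.12} on one factor and $L_2$ on the other) gives the exponents $k-3m_j-2$. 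The main obstacle I expect is bookkeeping: correctly matching, for each of the finitely many norm components in Definition~\ref{D2.1} and in the $X^{k-3,\rho(x)}$ requirement, which factor to place in $L_\infty$ and which in $L_2$ so that no component ever demands more than $k$ derivatives on a single factor (which would exit the available regularity) and so that the weight arithmetic $\rho^2/\rho' \le \rho$, together with $\rho' \ge 1$, always closes — and then threading the internal induction on $k$ (from $k-3$ to $k$) and, for the base cases, the induction "with respect to $l = i-m$" and "with respect to $j$" as in the proof of Lemma~\ref{L2.3}, rather than any single hard inequality; the interpolation inputs \eqref{1.10}, \eqref{1.12}, \eqref{1.14}, \eqref{1.15} are all quoted and do the analytic work.
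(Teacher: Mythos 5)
Your estimating scheme is essentially the paper's: take $f_1\equiv uv$, $f_0\equiv 0$ for the top temporal order when $k=3i$; bound each component of the $Y^{k,\rho(x)}$-norm by Leibniz expansion, putting the lower-order factor in $L_\infty$ (via \eqref{1.12}) or $L_4$ (via \eqref{1.10}) with weight $\rho^{1/2}$ and the higher-order factor in $L_2$ with weight $\rho^{1/2}$; gain $t_0^{1/2}$ by integrating a $C([0,t_0];\cdot)$-factor in time; and produce the initial-data sum from $\partial_t^m(uv)_x\big|_{t=0}$ when the $C([0,t_0];H_+^{k-3m-3,\rho(x)})$ part of the $X^{k-3,\rho(x)}$ requirement is controlled by its value at $t=0$ plus an $L_1$-in-time integral of the next time derivative. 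However, there is a genuine gap: membership in $Y^{k,\rho(x)}(\Pi_T^+)$ means membership in the spaces $\widetilde H_+^{(0,k-3m-1),\rho^2(x)/\rho'(x)}$ (and $X^{k-3,\rho(x)}$ built on $\widetilde H$), i.e.\ the product must inherit the boundary conditions at $y=0$ and $y=L$ encoded by the tilde, and you never verify this --- yet it is the only place where the hypothesis ``type b) or d)'' enters. The paper's proof begins precisely with this check: writing $\partial_y^j(uv)=\sum_l\binom{j}{l}\partial_y^l u\,\partial_y^{j-l}v$, for odd $j<k-1$ in case b) every term contains an odd-order $y$-derivative of $u$ or of $v$ and hence vanishes at $y=0,L$; case d) is immediate. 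As written, your argument would ``prove'' the lemma equally for cases a) and c), where the conclusion fails (cf.\ Remark~\ref{R3.1}), so this step cannot be omitted.

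Two smaller inaccuracies. First, the inequality $\rho^2/\rho'\leq\rho$ that you invoke is equivalent to $\rho\leq\rho'$ and is false in general (e.g.\ for power weights); what is actually needed, and what the paper uses in \eqref{3.2}--\eqref{3.3}, is that attaching $\rho^{1/2}$ to each factor dominates the required weight $(\rho^2/\rho')^{1/2}=\rho/(\rho')^{1/2}\leq\rho$, which does follow from $\rho'\geq1$; so your conclusion (``a single $\rho^{1/2}$ on each factor suffices'') is right, but your stated justification is not. Second, the proposed ``internal induction on $k$'' to obtain $f\in X^{k-3,\rho(x)}$ conflates $X^{k-3}$ with $Y^{k-3}$: the lemma at level $k-3$ only yields $Y^{k-3}$-membership, which is weaker than the $X^{k-3}$-membership demanded by Definition~\ref{D2.1}; the paper (and, in effect, your own third paragraph) estimates the $X^{k-3}$ components directly, so the induction is both unnecessary and, as stated via the base case ``$(uv)_x\in Y^{1,\rho(x)}$'', would not close.
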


\begin{proof}
Note that since
$$
\partial_y^j (uv) = \sum\limits_{l=0}^j {\binom jl} \partial_y^l u\partial_y^{j-l} v,
$$
in the case b) for odd values of $j<k-1$ either $\partial_y^l u\big|_{y=0}= \partial_y^l u\big|_{y=L}=0$ or $\partial_y^{j-l} v\big|_{y=0}= \partial_y^{j-l} v\big|_{y=L}=0$, therefore, $\partial_y^j (uv)_x\big|_{y=0}= \partial_y^j (uv)_x\big|_{y=L}=0$. In the case d) it is obvious that $\partial_y^j (uv)_x\big|_{y=0}= \partial_y^j (uv)_x\big|_{y=L}$ for $j<k-1$.

Let $k=3i$. In order to estimate $\partial_t^i (uv)$ in $L_2(0,t_0;L_{2,+}^{\rho^2(x)/\rho'(x)})$, consider $\partial_t^{m_1}u\partial_t^{m_2}v$, where $m_1+m_2=i$, $m_1\leq m_2$. Then since $\rho'(x)\geq 1$ with the use of \eqref{1.12}
\begin{multline}\label{3.2}
\|u\partial_t^iv\|_{L_2(0,t_0;L_{2,+}^{\rho^2(x)/\rho'(x)})} \leq 
\Bigl(\int_0^{t_0} \|u\rho^{1/2}\|^2_{L_{\infty,+}}
\|\partial_t^i v\|^2_{L_{2,+}^{\rho(x)}}\,dt\Bigr)^{1/2}  \\ \leq 
ct_0^{1/2} \|u\|_{C([0,t_0];H_+^{2,\rho(x)})}
\|\partial_t^i v\|_{C([0,t_0];L_{2,+}^{\rho(x)})} \leq
ct_0^{1/2} \|u\|_{X^{2,\rho(x)}(\Pi_{t_0}^+)}
\|v\|_{X^{k,\rho(x)}(\Pi_{t_0}^+)},
\end{multline}
and if $m_1\geq 1$ with the use of \eqref{1.10}
\begin{multline}\label{3.3}
\|\partial_t^{m_1}u \partial_t^{m_2}v\|_{L_2(0,t_0;L_{2,+}^{\rho^2(x)/\rho'(x)})} \leq  \Bigl(\int_0^{t_0} \|\partial_t^{m_1}u\rho^{1/2}\|^2_{L_{4,+}}
\|\partial_t^{m_2}v\rho^{1/2}\|^2_{L_{4,+}}\,dt\Bigr)^{1/2} \\ \leq
ct_0^{1/2} \|\partial_t^{m_1}u\|_{C([0,t_0];H_+^{1,\rho(x)})}
\|\partial_t^{m_2}v\|_{C([0,t_0];H_+^{1,\rho(x)})}  \\ \leq
ct_0^{1/2} \|u\|_{X^{k-2,\rho(x)}(\Pi_{t_0}^+)} 
\|v\|_{X^{k-2,\rho(x)}(\Pi_{t_0}^+)},
\end{multline}
since $3m_2+1\leq k-2$. 

Next, in order to estimate $\partial_t^m \partial_y^{k-3m-1}(uv)_x$ in $L_2(0,t_0;L_{2,+}^{\rho^2(x)/\rho'(x)})$, where $m\leq i-1$ if $k=3i$ and $m\leq i$ if $k=3i+1$, consider $\partial_t^{m_1}\partial^{\alpha_1}u \partial_t^{m_2}\partial^{\alpha_2}v$, where $3(m_1+m_2)+|\alpha_1|+|\alpha_2|=k$, $3m_1+|\alpha_1|\leq 3m_2+|\alpha_2|$. If $m_1=|\alpha_1|=0$ then similarly to \eqref{3.2}
$$
\|u\partial_t^{m_2}\partial^{\alpha_2}v\|_{L_2(0,t_0;L_{2,+}^{\rho^2(x)/\rho'(x)})} \leq ct_0^{1/2} \|u\|_{X^{2,\rho(x)}(\Pi_{t_0}^+)}
\|v\|_{X^{k,\rho(x)}(\Pi_{t_0}^+)},
$$
if $3m_2+|\alpha_2|\leq k-1$ then either $3m_1+|\alpha_1|\leq k-3$ or $k=4$, $m_1=m_2=0$, $|\alpha_1|=|\alpha_2|=2$. In the first case similarly to \eqref{3.3}
$$
\|\partial_t^{m_1}\partial^{\alpha_1}u \partial_t^{m_2}\partial^{\alpha_2}v\|_{L_2(0,t_0;L_{2,+}^{\rho^2(x)/\rho'(x)})} \leq ct_0^{1/2} \|u\|_{X^{k-2,\rho(x)}(\Pi_{t_0}^+)} \|v\|_{X^{k,\rho(x)}(\Pi_{t_0}^+)}.
$$
In the second case similarly to \eqref{3.2}
\begin{multline*}
\|\partial^{\alpha_1}u\partial^{\alpha_2}v\|_{L_2(0,t_0;L_{2,+}^{\rho^2(x)/\rho'(x)})} \leq 
\Bigl(\int_0^{t_0} \|\partial^{\alpha_1} u\|^2_{L_{2,+}^{\rho(x)}}
\|\partial^{\alpha_2}v\rho^{1/2}\|^2_{L_{\infty,+}} \,dt\Bigr)^{1/2}  \\ \leq 
ct_0^{1/2} \|u\|_{C([0,t_0];H_+^{2,\rho(x)})}
\|\partial^{\alpha_2} v\|_{C([0,t_0];H_+^{2,\rho(x)})} \leq
ct_0^{1/2} \|u\|_{X^{2,\rho(x)}(\Pi_{t_0}^+)}
\|v\|_{X^{4,\rho(x)}(\Pi_{t_0}^+)}.
\end{multline*}

In order to estimate $\partial_t^m (uv)_x$ in $C([0,t_0];H_+^{k-3m-3,\rho(x)})$ for $3m+3\leq k$, evaluate first $\partial_t^{m+1} (uv)_x$ in $L_1(0,t_0;H_+^{k-3m-3,\rho(x)})$. To this end consider $\partial_t^{m_1}\partial^{\alpha_1}u \partial_t^{m_2}\partial^{\alpha_2}v$, where $m_1+m_2=m+1$, $|\alpha_1|+|\alpha_2|\leq k-3m-2$, $3m_1+|\alpha_1|\leq 3m_2+|\alpha_2|$. If $m_1=|\alpha_1|=0$ then again  
since $\rho'(x)\geq 1$ with the use of \eqref{1.12}
\begin{multline*}
\|u\partial_t^{m_2}\partial^{\alpha_2}v\|_{L_1(0,t_0;L_{2,+}^{\rho(x)})} \leq
\|u\rho^{1/2}\|_{L_2(0,t_0;L_{\infty,+})} 
\|\partial_t^{m_2}\partial^{\alpha_2}v\|_{L_2(0,t_0;L_{2,+}^{\rho'(x)})} \\ \leq
ct_0^{1/2} \|u\|_{C([0,t_0];H_+^{2,\rho(x)})}
\|\partial_t^{m_2}v\|_{L_2(0,t_0;H_+^{k-3m_2+1,\rho'(x)})}  \\ \leq
ct_0^{1/2} \|u\|_{X^{2,\rho(x)}(\Pi_{t_0}^+)}
\|v\|_{X^{k,\rho(x)}(\Pi_{t_0}^+)}.
\end{multline*}
If $3m_2+|\alpha_2|\leq k$ then either $3m_1+|\alpha_1|\leq k-3$ or $k=4$, $m_1=0$, $m_2=1$, $|\alpha_1|=2$, $|\alpha_2|=0$. In the first case similarly to \eqref{3.3}
\begin{multline*}
\|\partial_t^{m_1}\partial^{\alpha_1}u \partial_t^{m_2}\partial^{\alpha_2}v\|_{L_1(0,t_0;L_{2,+}^{\rho(x)})} \\  \leq
\|\partial_t^{m_1}\partial^{\alpha_1}u (\rho')^{1/2}\|_{L_2(0,t_0;L_{\infty,+})}
\|\partial_t^{m_2}\partial^{\alpha_2}v\|_{L_2(0,t_0;L_{2,+}^{\rho(x)})} \\ \leq
ct_0^{1/2} \|\partial_t^{m_1} u\|_{L_2(0,t_0;H_+^{k-3m_1-1,\rho'(x)})}
\|\partial_t^{m_2}v\|_{C([0,t_0];H_+^{k-3m_2,\rho(x)})} \\ \leq
ct_0^{1/2} \|u\|_{X^{k-2,\rho(x)}(\Pi_{t_0}^+)}
\|v\|_{X^{k,\rho(x)}(\Pi_{t_0}^+)}.
\end{multline*}
In the second case
\begin{multline*}
\|\partial^{\alpha_1}u v_t\||_{L_1(0,t_0;L_{2,+}^{\rho(x)})}   \leq
\Bigl(\int_0^{t_0} \|\partial^{\alpha_1} u\|_{L_{2,+}^{\rho(x)}} \|v_t(\rho')^{1/2}\|_{L_{\infty,+}}\,dt\Bigr)^{1/2} \\ \leq ct_0^{1/2} \|u\|_{C([0,t_0];H_+^{2,\rho(x)})} \|v_t\|_{L_2(0,t_0;H_+^{2,\rho'(x)})} \leq 
ct_0^{1/2}\|u\|_{X^{2,\rho(x)}(\Pi_{t_0}^+)}
\|v\|_{X^{4,\rho(x)}(\Pi_{t_0}^+)}.
\end{multline*}
Now evaluate $\partial_t^m(uv)_x\big|_{t=0}$ in $H_+^{k-3m-3,\rho(x)}$. To this end consider $(\partial_t^{m_1}\partial^{\alpha_1}u \partial_t^{m_2}\partial^{\alpha_2}v)\big|_{t=0}$, where $3(m_1+m_2)+|\alpha_1|+|\alpha_2|\leq k-2$, $3m_1+|\alpha_1|\leq 3m_2+|\alpha_2|$. If $m_1=|\alpha_1|=0$ then
\begin{multline*}
\|(u\partial_t^{m_2}\partial^{\alpha_2}v)\big|_{t=0}\|_{L_{2,+}^{\rho(x)}} \leq	
\|u\big|_{t=0}\|_{L_{\infty,+}} 
\|\partial_t^{m_2}\partial^{\alpha_2}v\big|_{t=0}\|_{L_{2,+}^{\rho(x)}} \\ \leq
\|u\big|_{t=0}\|_{H_+^{2,\rho(x)}}
c\|\partial_t^{m_2}v\big|_{t=0}\|_{H_+^{k-3m_2-2,\rho(x)}}.
\end{multline*}
If $3m_2+|\alpha_2|\leq k-3$ then
\begin{multline*}
\|(\partial_t^{m_1}\partial^{\alpha_1}u \partial_t^{m_2}\partial^{\alpha_2}v)\big|_{t=0}\|_{L_{2,+}^{\rho(x)}} \leq c
\|\partial_t^{m_1}\partial^{\alpha_1}u\big|_{t=0}\rho^{1/2}\|_{L_{4,+}}
\|\partial_t^{m_2}\partial^{\alpha_2}v\big|_{t=0}\rho^{1/2}\|_{L_{4,+}} \\ \leq c
\|\partial_t^{m_1}u\big|_{t=0}\|_{H_+^{k-3m_1-2,\rho(x)}}
\|\partial_t^{m_2}v\big|_{t=0}\|_{H_+^{k-3m_2-2,\rho(x)}}.
\end{multline*}

Finally, appropriate estimates on $\partial_t^m (uv)_x$ in $L_2(0,t_0;H_+^{k-3m-2,\rho'(x)})$ for $3m+3\leq k$ can be obtained quite similarly to \eqref{3.2}, \eqref{3.3}.
\end{proof}

Now we can prove the main result of this section.

\begin{proof}[Proof of Theorem~\ref{T1.2}]
 In order to set to zero boundary data at $x=0$ we use special functions $J(t,x,y;\mu)$ of "boundary potential" type, constructed in \cite{F18-1} (without lost of generality we assume that $\mu\in \widetilde H^{(k+1)/3,k+1}(B)$). We do not intend to repeat here the definition of these functions but only describe their main properties, proved in \cite{F18-1}.

Any function $J$ is infinitely smooth for $x>0$ and satisfy equality \eqref{2.1} for $f\equiv 0$. For any $T>0$, $x_0>0$, $n$ and $j$
\begin{equation}\label{3.4}
\sup\limits_{x\geq x_0} \|J(\cdot,x,\cdot;\mu\|_{\widetilde H^{j,3j}(B_T)} \leq 
c(T,x_0,n,j,k)\|\mu\|_{\widetilde H^{(k+1)/3,k+1}(B)}.
\end{equation}
Next, for $m\leq k/3$ 
$$ 
\|\partial_t^m J\|_{C_b(\mathbb R^t;\widetilde H_+^{k-3m})} \leq c(k) \|\mu\|_{\widetilde H^{(k+1)/3,k+1}(B)}
$$
and if $3m+|\alpha|\leq k+1$
$$
\|\partial_t^m \partial^\alpha J\|_{C_b(\overline{\mathbb R}_+^x;L_2(B))} \leq c(k) \|\mu\|_{\widetilde H^{(k+1)/3,k+1}(B)},
$$
moreover, $J(t,0,y;\mu)\equiv \mu(t,y)$.

Let
\begin{equation}\label{3.5}
\psi(t,x,y)\equiv J(t,x,y;\mu)\eta(2-x).
\end{equation}
The aforementioned properties of the function $J$ provide that $\psi\in X^{k,\rho(x)}(\Pi_T^+)$ for any admissible function $\rho$, moreover, $\psi\big|_{x=0}=\mu$ and
\begin{equation}\label{3.6}
\|\psi\|_{X^{k,\rho(x)}(\Pi_T^+)} \leq c(\rho,k)\|\mu\|_{\widetilde H^{(k+1)/3,k+1}(B)}.
\end{equation}
Let
\begin{equation}\label{3.7}
\widetilde\psi\equiv \psi_t +b\psi_x +\psi_{xxx} +\psi_{xyy}.
\end{equation}
It is easy to see that
$$
\widetilde\psi =-bJ\eta'(2-x) -J\eta'''(2-x) +3J_x\eta''(2-x) -3J_{xx}\eta'(2-x) -J_{yy}\eta'(2-x),
$$
therefore, $\widetilde\psi\in C^\infty(\overline{\Pi}_T^+)$, $\widetilde\psi =0$ if $x\in [0,1]\cup[2,+\infty)$. In particular, $\widetilde\psi \in Y^{k,\rho(x)}(\Pi_T^+)$ and
\begin{equation}\label{3.8}
\|\widetilde\psi\|_{Y^{k,\rho(x)}(\Pi_T^+)} \leq c(T) \|\mu\|_{\widetilde H^{(k+1)/3,k+1}(B)}.
\end{equation}

Let
\begin{gather}\label{3.9}
U(t,x,y) \equiv u(t,x,y) -\psi(t,x,y), \\
\label{3.10}
U_0\equiv u_0 -\psi\big|_{t=0}, \qquad F\equiv -\widetilde\psi -\psi\psi_x.
\end{gather}
Instead of \eqref{1.1}--\eqref{1.4} consider in $\Pi_T^+$ an initial-boundary value problem for an equation
\begin{equation}\label{3.11}
U_t+bU_x+U_{xxx}+U_{xyy}+UU_x+(\psi U)_x=F,
\end{equation}
with initial and boundary conditions
\begin{equation}\label{3.12}
U\big|_{t=0} = U_0,\qquad U\big|_{x=0}=0
\end{equation}
and the same boundary conditions on $\Omega_{T,+}$ as \eqref{1.4}. 
Note that $U_0\in \widetilde H^{k,\rho(x)}_+$, $F\in Y^{k,\rho(x)}(\Pi_T^+)$. Define also for this problem special functions by analogy with $\Phi_m$: let $\Phi^*_0\equiv U_0$ and for $m\geq 1$
\begin{multline*}
\Phi^*_m\equiv \partial_t^{m-1} F\big|_{t=0} - (\partial_x^3 +\partial_x\partial_y^2 +b\partial_x)\Phi^*_{m-1} \\ -
\sum\limits_{l=0}^{m-1} {\binom {m-1}l} \Bigl[\Phi^*_l\partial_x\Phi^*_{m-l-1} +
\bigl(\partial_t^l\psi\big|_{t=0} \Phi^*_{m-l-1}\bigr)_x\Bigr].
\end{multline*}
It is easy to see, that $\Phi_m^*=\Phi_m -\partial_t^m\psi\big|_{t=0}$.

For $t_0\in (0,T]$ consider a set of functions
$$
\widetilde X^{k,\rho(x)}(\Pi_{t_0}^+) =\bigl\{v\in X^{k,\rho(x)}(\Pi_{t_0}^+):\ 
\partial_t^m v\big|_{t=0} =\Phi^*_m\ \mbox{for } m<k/3\bigr\}.
$$
Define on this set a map $u=\Lambda v$, where $u \in \widetilde X^{k,\rho(x)}(\Pi_{t_0}^+)$ is a solution to linear problem 
\begin{equation}\label{3.13}
u_t+bu_x+u_{xxx}+u_{xyy} = f\equiv F-(\psi v)_x -vv_x,
\end{equation}
with initial and boundary conditions $u\big|_{t=0}=U_0$, $u\big|_{x=0}=0$ and \eqref{1.4}. Note that by virtue of Lemma~\ref{L3.1} $f\in Y^{k,\rho(x)}(\Pi_{t_0}^+)$. It easy to see that the corresponding functions $\widetilde\Phi_m$, written for this problem in accordance with Definition~\ref{D2.2} for $m<k/3$, coincide with $\Phi^*_m$, therefore, $\widetilde\Phi_m\big|_{x=0}=0$. Then Lemma~\ref{L2.3} provides that the map $\Lambda$ exists. Moreover, inequalities \eqref{2.9}, \eqref{3.1}, \eqref{3.6} and \eqref{3.8} yield that
\begin{multline}\label{3.14}
\|\Lambda v\|_{X^{k,\rho(x)}(\Pi_{t_0}^+)} \leq c(T)\Bigl(\|u_0\|_{H_+^{k,\rho(x)}}
+ \|u_0\|^2_{H_+^{k,\rho(x)}} + \|\mu\|_{H^{(k+1)/3,k+1}(B_T)}  \\+ \|\mu\|^2_{H^{(k+1)/3,k+1}(B_T)}  +
t_0^{1/2}\|\mu\|_{H^{(k+1)/3,k+1}(B_T)}\|v\|_{X^{k,\rho(x)}(\Pi_{t_0}^+)}  \\+
t_0^{1/2}\|v\|_{X^{k-2,\rho(x)}(\Pi_{t_0}^+)}\|v\|_{X^{k,\rho(x)}(\Pi_{t_0}^+)}
\Bigr).
\end{multline}
Similarly,
\begin{multline}\label{3.15}
\|\Lambda v_1 -\Lambda v_2\| _{X^{k,\rho(x)}(\Pi_{t_0}^+)} \leq c(T) t_0^{1/2}\Bigl(
\|\mu\|_{H^{(k+1)/3,k+1}(B_T)}\|v_1-v_2\|_{X^{k,\rho(x)}(\Pi_{t_0}^+)} \\+
\bigl(\|v_1\|_{X^{k,\rho(x)}(\Pi_{t_0}^+)} +\|v_2\|_{X^{k,\rho(x)}(\Pi_{t_0}^+)}\bigr)
\|v_1-v_2\|_{X^{k,\rho(x)}(\Pi_{t_0}^+)}\Bigr).
\end{multline}
Therefore, existence of the unique fixed point $U$ of the map $\Lambda$ for certain $t_0$, depending on $\|u_0\|_{H_+^{k,\rho(x)}}$ and $\|\mu\|_{H^{(k+1)/3,k+1}(B_T)}$, follows by the standard argument. Then $u\equiv U+\psi\in X^{k,\rho(x)}(\Pi_{t_0}^+)$ is the unique solution to the original problem.

Note that Theorem~\ref{T1.1} provides that $U\in X^{3,\rho(x)}(\Pi_T^+)$, then application of inequality \eqref{3.14} to the function $U$ and induction with respect to $k$ imply that $u\in X^{k,\rho(x)}(\Pi_T^+)$.

Finally, we prove continuous dependence. Let $M>0$, let the functions $u_{0j}$, $\mu_j$,  satisfy the hypothesis of Theorem~\ref{T1.2} and $\|(u_{0j},\mu_j)\|_{\widetilde H_+^{k,\rho(x)} \times \widetilde H^{(k+1)/3,k+1}(B_T)}\leq M$ for $j=1$ and $2$, then for the corresponding solutions $\|u_j\|_{X^{k,\rho(x)}(\Pi_T^+)}\leq c_0(M)$. Define the functions $\psi_j$ and $U_j$ by the corresponding analogs of formulas \eqref{3.5} and \eqref{3.9}. Then similarly to \eqref{3.15} for $t_0\in (0,T]$
\begin{multline*}
\|U_1- U_2\|_{X^{k,\rho(x)}(\Pi_{t_0}^+)} \leq c(M)\Bigl(
\|u_{01}-u_{02}\|_{\widetilde H_+^{k,\rho(x)}} +
\|\mu_1-\mu_2\|_{\widetilde H^{(k+1)/3,k+1}(B_T)} \\+
t_0^{1/2}\|U_1- U_2\|_{X^{k,\rho(x)}(\Pi_{t_0}^+)}\Bigr),
\end{multline*}
whence the desired result immediately succeeds.
\end{proof}

\begin{remark}\label{R3.1}
The reason why the implemented scheme does not work for the types of boundary conditions a) and c) is that in the linear case solutions are constructed as limits of smooth ones where the condition $f_{yy}\big|_{y=0}=0$ is necessary. Then this condition is inherited in the spaces $Y^{k,\rho(x)}$ for $k\geq 4$. For ZK equation itself it means that $u_y\big|_{y=0}=0$ since here $f_{yy}=-(uu_{xyy}+2u_yu_{xy}+u_xu_{yy})$, but such boundary condition is superfluous.
\end{remark}

\section{Internal regularity}\label{S4}

Establish, first, one auxiliary lemma, which is similar to Lemma~\ref{L3.1}, but is valid for all types of boundary conditions.
In fact, in an implicit form it was established in \cite{F18-1}.

\begin{lemma}\label{L4.1}
Let $u,v \in X^{3,\rho(x)}(\Pi_T^+)$ for certain $T>0$ and an admissible weight function $\rho(x)$, such that $\rho'(x)$ is also an admissible weight function and $\rho'(x)\geq 1 \ \forall x\geq 0$. Then $(uv)_x\in Y^{3,\rho(x)}(\Pi_T^+)$ and for any $t_0\in (0,T]$
\begin{multline}\label{4.1}
\|(uv)_x\|_{Y^{3,\rho(x)}(\Pi_{t_0}^+)} \\ \leq c(T)t_0^{1/2}\Bigl(\|u\|_{X^{2,\rho(x)}(\Pi_{t_0}^+)}\|v\|_{X^{3,\rho(x)}(\Pi_{t_0}^+)} + \|u\|_{X^{3,\rho(x)}(\Pi_{t_0}^+)}\|v\|_{X^{2,\rho(x)}(\Pi_{t_0}^+)}\Bigr) \\+
c\|u\big|_{t=0}\|_{H_+^{2,\rho(x)}} \|v\big|_{t=0}\|_{H_+^{2,\rho(x)}}.
\end{multline}
\end{lemma}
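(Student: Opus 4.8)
The plan is to repeat, with obvious simplifications, the argument of Lemma~\ref{L3.1} in the particular case $k=3$ (so that $i=1$ in Definition~\ref{D2.1}); in fact all the estimates involved are precisely the ones used in \cite{F18-1} to construct the solutions of Theorem~\ref{T1.1}. Write $(uv)_x=f_{1x}$ with $f_1\equiv uv$ and $f_0\equiv 0$ (so the condition on $\partial_t f_0$ is trivial), so that the $Y^{3,\rho(x)}$-norm of $(uv)_x$ reduces to estimating $\partial_t(uv)$ in $L_2(0,t_0;L_{2,+}^{\rho^2(x)/\rho'(x)})$, $(uv)_x$ in $L_2(0,t_0;\widetilde H_+^{(0,2),\rho^2(x)/\rho'(x)})$, and $(uv)_x$ in $C([0,t_0];L_{2,+}^{\rho(x)})\cap L_2(0,t_0;\widetilde H_+^{1,\rho'(x)})$. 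The first step is to check that $(uv)_x$ satisfies the boundary conditions built into $\widetilde H_+^{(0,2)}$ and $\widetilde H_+^{1}$. Since $k=3$, only the conditions on $\varphi\big|_{y=0,L}$ (cases a), c), d)) and on $\varphi_y\big|_{y=0,L}$ (cases b), c), d)) are inherited under closure, and the corresponding traces of $(uv)_x$ are obtained by differentiating in $x$ the traces $(uv)\big|_{y=0,L}$ and $(uv)_y\big|_{y=0,L}$, which by the Leibniz rule are combinations of traces of $u,v,u_y,v_y$, each of which vanishes or satisfies the appropriate matching relation because $u$ and $v$ do. This is exactly why the estimate holds for all four types of boundary conditions, in contrast to Lemma~\ref{L3.1}: no second-order $y$-trace of $u$ or $v$ is ever required.

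For the norm estimates themselves I would expand each quantity by the Leibniz rule into a sum of products $\partial_t^{m_1}\partial^{\alpha_1}u\cdot\partial_t^{m_2}\partial^{\alpha_2}v$ whose order of differentiation does not exceed that allowed by $u,v\in X^{3,\rho(x)}(\Pi_{t_0}^+)$; in each such product one of the two factors always has spatial order at most $1$. That factor is placed into $L_{\infty,+}$ by \eqref{1.12}, using $\rho'(x)\ge 1$ (so that $(\rho'(x))^{-1/2}\le 1$) to deal with the weight $\rho^2/\rho'$; its $L_{\infty,+}$-norm is then controlled by $\|\cdot\|_{H_+^{2,\rho(x)}}$, where, when the low-order factor has order exactly $1$, one has to invoke the smoothing component $L_2(0,t_0;\widetilde H_+^{\cdot,\rho'(x)})$ of the $X$-space, so that its host function need lie only in $X^{2,\rho(x)}$. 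The remaining factor is estimated in $L_{2,+}$ with the appropriate weight, the $\rho$--$\rho'$ mismatch being handled by $\rho'(x)\le c\rho(x)$ (which holds for every admissible $\rho$), and is bounded by $\|\cdot\|_{X^{3,\rho(x)}(\Pi_{t_0}^+)}$. This produces the two cross terms $\|u\|_{X^{2,\rho(x)}(\Pi_{t_0}^+)}\|v\|_{X^{3,\rho(x)}(\Pi_{t_0}^+)}$ and $\|u\|_{X^{3,\rho(x)}(\Pi_{t_0}^+)}\|v\|_{X^{2,\rho(x)}(\Pi_{t_0}^+)}$ on the right side of \eqref{4.1}.

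To extract the factor $t_0^{1/2}$ I would argue as in \eqref{3.2}--\eqref{3.3}: whenever both factors of a product are kept in spaces of the type $C([0,t_0];\cdot)$, the bound $\int_0^{t_0}\|g(t)\|^2\,dt\le t_0\,\|g\|^2_{C([0,t_0])}$ supplies $t_0^{1/2}$; in the one place where a factor must be taken from an $L_2(0,t_0;\widetilde H_+^{\cdot,\rho'(x)})$-component — namely in $\partial_t(uv)_x$, which is needed to bound the continuous-in-time norm — the Cauchy--Schwarz inequality in $t$ yields $t_0^{1/2}$ once more. Finally, the $C([0,t_0];L_{2,+}^{\rho(x)})$-part of $f=(uv)_x$ is treated via $\|(uv)_x\|_{C([0,t_0];L_{2,+}^{\rho(x)})}\le\|(uv)_x|_{t=0}\|_{L_{2,+}^{\rho(x)}}+\int_0^{t_0}\|\partial_t(uv)_x\|_{L_{2,+}^{\rho(x)}}\,dt$, where the first summand, estimated by \eqref{1.10} and \eqref{1.12}, yields the last term $c\|u|_{t=0}\|_{H_+^{2,\rho(x)}}\|v|_{t=0}\|_{H_+^{2,\rho(x)}}$ of \eqref{4.1}, and the second summand has already been bounded.

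I do not expect a genuine obstacle here: the only real work is the systematic bookkeeping of the three weights $\rho$, $\rho'$, $\rho^2/\rho'$ across the (numerous) Leibniz terms, together with the verification that in each term the low-order factor can be arranged to require at most the $X^{2,\rho(x)}$-norm; once the two relations $\rho'(x)\le c\rho(x)$ and $\rho'(x)\ge 1$ are used consistently, every term falls into one of the patterns already displayed in \eqref{3.2}--\eqref{3.3}. As in \cite{F18-1}, the argument is first carried out for $u,v$ smooth and rapidly decaying in $x$ and then extended to the general case by closure.
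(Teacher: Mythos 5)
Your general scheme coincides with the paper's: take $f_1\equiv uv$, $f_0\equiv 0$, check only the first-order $y$-traces of $uv$ via the Leibniz rule (which is exactly why all four boundary conditions are admissible, in contrast to Lemma~\ref{L3.1}), extract $t_0^{1/2}$ by Cauchy--Schwarz in $t$ or by $\int_0^{t_0}\|g\|^2dt\le t_0\|g\|^2_{C([0,t_0])}$, and isolate the $t=0$ contribution when bounding $(uv)_x$ in $C([0,t_0];L_{2,+}^{\rho(x)})$ through $\partial_t(uv)_x\in L_1(0,t_0;L_{2,+}^{\rho(x)})$.

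However, there is a genuine gap in your weight bookkeeping for the mixed terms of order $(1,2)$, such as $u_xv_{yy}$, $u_yv_{xy}$, $u_{yy}v_x$, which must be estimated in $L_2(0,t_0;L_{2,+}^{\rho^2(x)/\rho'(x)})$ (the $\widetilde H_+^{(0,2),\rho^2(x)/\rho'(x)}$-norm of $(uv)_x$). You propose to put the order-one factor into $L_{\infty,+}$ via \eqref{1.12}, invoking the smoothing component $L_2(0,t_0;\widetilde H_+^{3,\rho'(x)})$ of $X^{2,\rho(x)}(\Pi_{t_0}^+)$; but that component controls only $\|\partial^{\alpha_1}u\,(\rho')^{1/2}\|_{L_{\infty,+}}$, not $\|\partial^{\alpha_1}u\,\rho^{1/2}\|_{L_{\infty,+}}$, and the inequality $\rho'\le c\rho$ you invoke goes the wrong way here: after pairing with the order-two factor in $L_{2,+}^{\rho(x)}$ the available weight is $(\rho\rho')^{1/2}$ while the required multiplier is $\rho/(\rho')^{1/2}$, so closing the estimate along your lines would need $\rho\le c\rho'$ (or $\rho^{1/2}\le c\rho'$), which is not assumed in Lemma~\ref{L4.1} and fails, e.g., for the admissible power weights $\rho(x)=\frac1{2\alpha}(1+x)^{2\alpha}$. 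Replacing the smoothing component by the $C$-in-time bound $\|\partial^{\alpha_1}u\,\rho^{1/2}\|_{L_{\infty,+}}\le c\|u\|_{H_+^{3,\rho(x)}}$ would force the $X^{3,\rho(x)}$-norm of that factor and produce $\|u\|_{X^{3,\rho(x)}}\|v\|_{X^{3,\rho(x)}}$, which is not the form claimed in \eqref{4.1}. The paper handles precisely these terms with the weighted $L_4$ interpolation \eqref{1.10}, in the pattern of \eqref{3.3}: $\|\partial^{\alpha_1}u\,\rho^{1/2}\|_{L_{4,+}}\le c\|u\|_{H_+^{2,\rho(x)}}$ and $\|\partial^{\alpha_2}v\,\rho^{1/2}\|_{L_{4,+}}\le c\|v\|_{H_+^{3,\rho(x)}}$, combined with $\rho^2/\rho'\le\rho\cdot\rho$ (from $\rho'\ge1$); both factors then come from the $C([0,t_0];\cdot)$ components and give $c\,t_0^{1/2}\|u\|_{X^{2,\rho(x)}}\|v\|_{X^{3,\rho(x)}}$. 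You use \eqref{1.10} only for the $t=0$ term; it (or an equivalent $L_4\times L_4$ splitting) is indispensable for these bulk terms as well. The remaining parts of your argument --- the order-$(0,3)$ terms via \eqref{1.12} with weight $\rho$ as in \eqref{3.2}, the $\partial_t(uv)$ estimate, the $L_1$-in-time estimates where the target weight is only $\rho$ and the extra $(\rho')^{1/2}\ge1$ is harmless, and the final $L_2(0,t_0;\widetilde H_+^{1,\rho'(x)})$ bound --- are sound and follow the paper.
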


\begin{proof}
The type of boundary conditions here is irrelevant, since $(uv)\big|_{y=0}= (uv)\big|_{y=L}=0$ in the case a), $(uv)_y\big|_{y=0}=(uv)_y\big|_{y=L}=0$ in the case b), $(uv)\big|_{y=0}=(uv)_y\big|_{y=L}= 0$ in the case c), $(uv)\big|_{y=0}= (uv)\big|_{y=L}$, $(uv)_y\big|_{y=0}=(uv)_y\big|_{y=L}$ in the case d)  and it is sufficient for the following argument.

Estimate \eqref{3.2} remains the same, estimate \eqref{3.3} is omitted. 

In order to estimate $\partial_y^{2}(uv)_x$ in $L_2(0,t_0;L_{2,+}^{\rho^2(x)/\rho'(x)})$, consider $\partial^{\alpha_1}u \partial^{\alpha_2}v$, where $|\alpha_1|+|\alpha_2|=3$, $|\alpha_1|\leq |\alpha_2|$. If $|\alpha_1|=0$ then similarly to \eqref{3.2}
$$
\|u\partial^{\alpha_2}v\|_{L_2(0,t_0;L_{2,+}^{\rho^2(x)/\rho'(x)})} \leq ct_0^{1/2} \|u\|_{X^{2,\rho(x)}(\Pi_{t_0}^+)}
\|v\|_{X^{3,\rho(x)}(\Pi_{t_0}^+)},
$$
if $|\alpha_2|= 2$ then $|\alpha_1|= 1$ and similarly to \eqref{3.3}
$$
\|\partial^{\alpha_1}u \partial^{\alpha_2}v\|_{L_2(0,t_0;L_{2,+}^{\rho^2(x)/\rho'(x)})} \leq ct_0^{1/2} \|u\|_{X^{2,\rho(x)}(\Pi_{t_0}^+)} \|v\|_{X^{3,\rho(x)}(\Pi_{t_0}^+)}.
$$

In order to estimate $(uv)_x$ in $C([0,t_0];L_{2,+}^{\rho(x)})$, evaluate first $\partial_t (uv)_x$ in $L_1(0,t_0;L_{2,+}^{\rho(x)})$. To this end, consider $\partial^{\alpha_1}u \partial^{\alpha_2}v_t$, where $|\alpha_1|+|\alpha_2|\leq 1$. If $|\alpha_1|=0$ then since $\rho'(x)\geq 1$ with the use of \eqref{1.12}
\begin{multline*}
\|u\partial^{\alpha_2}v_t\|_{L_1(0,t_0;L_{2,+}^{\rho(x)})} \leq
t_0^{1/2}\|u\rho^{1/2}\|_{C([0,t_0];L_{\infty,+})} 
\|\partial^{\alpha_2}v_t\|_{L_2(0,t_0;L_{2,+}^{\rho'(x)})} \\ \leq
ct_0^{1/2} \|u\|_{C([0,t_0];H_+^{2,\rho(x)})}
\|v_t\|_{L_2(0,t_0;H_+^{1,\rho'(x)})}   \leq
ct_0^{1/2} \|u\|_{X^{2,\rho(x)}(\Pi_{t_0}^+)}
\|v\|_{X^{3,\rho(x)}(\Pi_{t_0}^+)},
\end{multline*}
and if $|\alpha_2|=0$ then
\begin{multline*}
\|\partial^{\alpha_1}u v_t\|_{L_1(0,t_0;L_{2,+}^{\rho(x)})}   \leq
t_0^{1/2} \|\partial^{\alpha_1} u\rho^{1/2}\|_{C([0,t_0];L_{4,+})} \|v_t (\rho')^{1/2}\|_{L_2(0,t_0;L_{4,+})} \\ \leq
ct_0^{1/2} \|u\|_{C([0,t_0];H_+^{2,\rho(x)})}
\|v_t\|_{L_2(0,t_0;H_+^{1,\rho'(x)})}   \leq
ct_0^{1/2} \|u\|_{X^{2,\rho(x)}(\Pi_{t_0}^+)}
\|v\|_{X^{3,\rho(x)}(\Pi_{t_0}^+)}.
\end{multline*}
It is also obvious, that
$$
\|(uv)_x\big|_{t=0}\|_{L_{2,+}^{\rho(x)}} \leq c\|u\big|_{t=0}\|_{H_+^{2,\rho(x)}} \|v\big|_{t=0}\|_{H_+^{2,\rho(x)}}.
$$

Finally, it is easy to see that
$$
\|(uv)_x\|_{L_2(0,t_0;H_+^{1,\rho'(x)})} \leq ct_0\|u\|_{X^{2,\rho(x)}(\Pi_{t_0}^+)}
\|v\|_{X^{2,\rho(x)}(\Pi_{t_0}^+)}.
$$
\end{proof}

\begin{proof}[Proof of Theorem~\ref{T1.3}]
We use induction with respect to $n$. For $n=0$ the result, of course, is a simple corollary of Theorem~\ref{T1.1}. Let $n\geq 1$ and $\partial_x^{n-1} u\in X^{3,\rho(x)}(\Pi_T^{x_0})$ for any $x_0>0$.

Introduce the function $U$ by formula \eqref{3.9}, where $u\in X^{3,\rho(x)}(\Pi_T^+)$ is the solution to problem \eqref{1.1}--\eqref{1.4} from Theorem~\ref{T1.1}. Then $U\in X^{3,\rho(x)}(\Pi_T^+)$ can be considered as a solution to a linear initial-boundary value problem in $\Pi_T^+$ to an equation
\begin{equation}\label{4.2}
U_t+bU_x +U_{xxx}+U_{xyy}=f\equiv uu_x-\widetilde\psi 
\end{equation}
with initial and boundary conditions \eqref{3.10}, \eqref{1.4} (the function $\widetilde\psi$ is defined in \eqref{3.7}). Property \eqref{3.4} of the boundary potential implies that $\partial_x^l\psi\in X^{3,\rho(x)}(\Pi_T^{x_0})$ for all $x_0>0$ and natural $l$. Lemma~\ref{L4.1} implies that $uu_x\in Y^{3,\rho(x)}(\Pi_T^+)$. Taking into account also \eqref{3.8}, one can see that the hypothesis of Lemma~\ref{L2.3} is verified for the function $U$ in the case $k=3$. 
It is obvious that $\partial_x^l\widetilde\psi\in C([0,T];\widetilde H_{x_0}^{2,\rho^2(x)/\rho'(x)})$ for any $x_0>0$ and natural $l$.

Next, we show that
\begin{equation}\label{4.3}
\partial_x^n (uu_x) \in L_2(0,T; \widetilde H_{x_0}^{2,\rho^2(x)/\rho'(x)}).
\end{equation}
To this end, consider $\partial^{\alpha_1}u \partial^{\alpha_2}u$, where $\alpha_1=(i_1,j_1)$, $\alpha_2=(i_2,j_2)$, $|\alpha_1|+|\alpha_2|=n+3$, $|\alpha_1|\leq |\alpha_2|$, $j_1+j_2\leq 2$. If $|\alpha_1|=0$, then
$$
\iiint_{\Pi_T^{x_0}} ( u \partial^{\alpha_2}u)^2\frac{\rho^2}{\rho'}\,dxdydt   \leq
\sup\limits_{\Pi_T^+} \Bigl(u \frac{\rho}{\rho'}\Bigr)^2 
\iiint_{\Pi_T^{x_0}} (\partial^{\alpha_2}u)^2\rho'\,dxdydt <\infty,
$$
since $\rho/\rho'\leq c\rho^{1/2}$, $u\in C([0,T];H_+^{3,\rho(x)})$ and, therefore,
$u \rho^{1/2}\in C_b(\overline{\Pi}_T^+)$, $\partial^{\alpha_2} u \in L_2(0,T;L_{2,x_0}^{\rho'(x)})$ by virtue of the inductive hypothesis. 

If $|\alpha_2|\leq n+2$ then for any $t\in [0,T]$ with the use of \eqref{1.10} 
$$
\Bigl(\iint_{\Sigma_{x_0}} (\partial^{\alpha_2}u)^4\rho^2\,dxdy\Bigr)^{1/2}   \leq \sum\limits_{l=0}^{n-1}
\|\partial_x^{l} u\|^2_{C([0,T];H_{x_0}^{3,\rho(x)})} <\infty,
$$
and, therefore, $\partial^{\alpha_1}u \partial^{\alpha_2}u \in L_2(0,T;L_{2,x_0}^{\rho^2(x)/\rho'(x)})$.

Once \eqref{4.3} is established, it is suffice to apply Lemma~\ref{L2.5}.
\end{proof} 

\begin{proof}[Proof of Theorem~\ref{T1.4}]
We start with the following auxiliary assertion. Let for certain $x_0>0$, $y_0\in (0,L/2)$ and an admissible weight function $\rho(x)$, such that $\rho'$ is also admissible,
\begin{equation}\label{4.4}
v\in C([0,T];L_{2,x_0,y_0}^{\rho(x)}),\quad
|Dv| \in L_2(0,T;_{L_2,x_0,y_0}^{\rho'(x)}).
\end{equation}  
Then by virtue of \eqref{1.10}
\begin{equation}\label{4.5}
\iiint_{\Pi_T^{x_0,y_0}} v^4 \rho'\rho\,dxdydt <\infty.
\end{equation}

Now as in the proof of Theorem~\ref{T1.3} consider the function $U$ as the solution to the linear initial-boundary value problem for equation \eqref{4.1}. Assume that for certain $1\leq j\leq n$, any $x_0>0$, $y_0\in (0,L/2)$ and any multi-index $\alpha=(l_1,l_2)$, such that $|\alpha|\leq n+3$, $l_2\leq j+2$, for $v\equiv \partial^\alpha u$ and $\rho\equiv \rho_{j-1}$ property \eqref{4.4} is verified. Then \eqref{4.5} implies that since $\rho'_{j-1}(x)\geq c\rho_j(x)>0$ and, therefore, $\rho_{j-1}(x)\geq c>0$
$$
\partial_x^{m-j}\partial_y^{j+2}(uu_x)\in L_2(0,T;L_{2,x_0,y_0}^{\rho'_{j-1}(x)})
$$
if $j\leq m\leq n$. Then it follows from Lemma~\ref{L2.6} that
$$
\partial_x^{n-j}\partial_y^{j+3} U \in C\bigl([0,T];L_{2,x_0,y_0}^{\rho_j(x)}\bigr),\quad
\partial_x^{n-j}\partial_y^{j+4} U \in L_2\bigl(0,T;L_{2,x_0,y_0}^{\rho'_j(x)}\bigr).
$$
Therefore, for any multi-index $\alpha=(l_1,l_2)$, such that $|\alpha|\leq n+3$, $l_2\leq j+3$,
$$
\partial^\alpha u\in C([0,T];L_{2,x_0,y_0}^{\rho_{j}(x)}),\quad
|D\partial^\alpha u| \in L_2(0,T;_{L_2,x_0,y_0}^{\rho'_{j}(x)}).
$$
Since for $j=1$, $v\equiv \partial^\alpha u$ property \eqref{4.4} follows from Theorem~\ref{T1.3}, induction with respect to $j$ provides the desired result.
\end{proof}

\section{Large-time decay of small solutions}\label{S5}

\begin{proof}[Proof of Theorem~\ref{T1.4}]
Let $\alpha>0$, $\rho(x)\equiv \frac1{2\alpha}e^{2\alpha x}$, $u_0\in \widetilde H_+^{3,\rho(x)}$, $u_0(0,y)\equiv 0$, $\mu\equiv 0$. Consider the solution to problem \eqref{1.1}--\eqref{1.4} (in the cases a) and c)) $u\in X^{3,\rho(x)}(\Pi_T^+)\ \forall T$. 

Multiplication of equation \eqref{1.1} by $2u(t,x,y)$ and consequent integration obviously provides an inequality
\begin{equation}\label{5.1}
\|u(t,\cdot,\cdot)\|_{L_{2,+}}\leq \|u_0\|_{L_{2,+}} \quad \forall t\geq 0,
\end{equation} 
which is, of course, the analog of the conservation law for Zakharov--Kuznetsov equation. Multiplication of \eqref{1.1} by $2u(t,x,y)\rho(x)$ provides an equality
\begin{multline}\label{5.2}
\frac{d}{dt}\iint u^2\rho\,dxdy  + \int_0^L u_x^2\big|_{x=0}\,dy+
2\alpha \iint (3u_x^2+u_y^2)\rho\,dxdy \\
-2\alpha(b+4\alpha^2) \iint u^2\rho\,dxdy
= \frac{2\alpha}3 \iint u^3\rho\,dxdy.
\end{multline} 
With the use of \eqref{1.10} and \eqref{5.1} one can easily show that uniformly with respect to $L$
\begin{equation}\label{5.3}
\frac{2}3 \iint u^3\rho\,dxdy \leq \frac12 \iint |Du|^2\rho\,dxdy  +
c (\|u_0\|_{L_{2,+}}+\|u_0\|_{L_{2,+}}^2)\iint u^2\rho\,dxdy
\end{equation}
(for more details see \cite{F18-1}).
Inequality \eqref{1.13} yields that for certain constant $c_0$
\begin{equation}\label{5.4}
\frac12 \iint u_y^2\rho\,dxdy \geq \frac{c_0}{L^2} \iint u^2\rho\,dxdy.
\end{equation}
Combining \eqref{5.2}--\eqref{5.4} we find that uniformly with respect to $\alpha$ and $L$
\begin{multline}\label{5.5}
\frac{d}{dt}\iint u^2\rho\,dxdy + \int_0^L u_x^2\big|_{x=0}\,dy +
\alpha \iint |Du|^2\rho\,dxdy \\
+\alpha\Bigl(\frac{c_0}{L^2}-2b-8\alpha^2-c (\|u_0\|_{L_{2,+}}+\|u_0\|_{L_{2,+}}^2)\Bigr) \iint u^2\rho\,dxdy
\leq 0.
\end{multline}
Choose $\displaystyle L_0= \frac12\sqrt{\frac{c_0}b}$ if $ b>0$, $\displaystyle \alpha_0 = \frac{\sqrt{c_0}}{8L}$, $\epsilon_0>0$ satisfying an inequality $\displaystyle \epsilon_0+\epsilon_0^2 \leq \frac {c_0}{8cL^2}$, $\beta = \displaystyle \frac{c_0}{4L^2}$. Then it follows from \eqref{5.5} that for $\alpha\in (0,\alpha_0]$ and $\|u_0\|_{L_{2,+}}\leq \epsilon_0$
\begin{equation}\label{5.6}
\frac{d}{dt}\iint u^2\rho\,dxdy  
+\alpha\beta\iint u^2\rho\,dxdy
\leq 0,
\end{equation}
which, in turn, yields that
\begin{equation}\label{5.7}
\|e^{\alpha x}u(t,\cdot,\cdot)\|^2_{L_{2,+}} \leq e^{-\alpha\beta t} \|e^{\alpha x}u_0\|^2_{L_{2,+}}\qquad \forall t\geq 0.
\end{equation}
Moreover, it was shown in \cite{F18-1} that under the same assumptions on $\alpha$ and $\|u_0\|_{L_{2,+}}$
\begin{equation}\label{5.8}
e^{\alpha\beta t}\|u(t,\cdot,\cdot)\|^2_{H_+^{1,\exp(2\alpha x)}} + \int_0^t e^{\alpha\beta \tau} \iint |D^2u|^2\rho\, dxdyd\tau \leq c\qquad \forall t\geq 0,
\end{equation}
where the constant $c$ depends on $b$, $\alpha$, $\beta$, $\|u_0\|_{H_+^{1,\exp(2\alpha x)}}$.

Next, note that Lemma~\ref{L4.1} implies that for $f\equiv -uu_x$ the hypothesis of Lemma~\ref{L2.5} is verified for all $T>0$. Apply equality \eqref{2.16}, then for a.e. $t>0$
\begin{multline}\label{5.9}
\frac{d}{dt} \iint u_t^2\rho\,dxdy + \rho(0)\int_0^L \nu_1^2\,dy +
\iint (3u_{tx}^2+u_{ty}^2-bu_t^2)\rho'\,dxdy \\ -
\iint u_t^2\rho'''\,dxdy = 2\iint uu_tu_{tx}\rho\,dxdy +4\alpha\iint uu^2_t\rho\,dxdy.
\end{multline}
Here inequality \eqref{1.10} implies that
\begin{multline*}
\iint uu_tu_{tx}\rho\,dxdy \leq \Bigl(\iint u^4\,dxdy\Bigr)^{1/4} \Bigl(\iint u_t^4\rho^2\,dxdy\Bigr)^{1/4} \Bigl(\iint u^2_{tx}\rho\,dxdy\Bigr)^{1/2} \\ \leq
\varepsilon \iint |Du_t|^2\rho\,dxdy +c(\varepsilon)\bigl(\|u\|^4_{H^1_+} +\|u\|_{H_+^1}\bigr)\iint u_t^2\rho\,dxdy,
\end{multline*}
where $\varepsilon>0$ can be chosen arbitrarily small. The second term in the right side  of \eqref{5.9} is estimated in a similar way. As a result, equality \eqref{5.9} yields that similarly to \eqref{5.5}
\begin{multline}\label{5.10}
\frac{d}{dt}\iint u_t^2\rho\,dxdy + \alpha \iint |Du_t|^2\rho\,dxdy \\
+\alpha\Bigl(\frac{c_0}{L^2}-2b-8\alpha^2-c(\alpha) \bigl(\|u\|^4_{H^1_+} +\|u\|_{H_+^1}\bigr)\Bigr) \iint u_t^2\rho\,dxdy \leq 0,
\end{multline}
where $c_0$ is the same constant as in \eqref{5.4}. According to \eqref{5.8} choose $T_0>0$ such that
$$
c(\alpha) \bigl(\|u\|^4_{H^1_+} +\|u\|_{H_+^1}\bigr) \leq \frac{c_0}{8L^2}\qquad \forall t\geq T_0.
$$
Then similarly to \eqref{5.6} we derive that for $t\geq T_0$
$$
\frac{d}{dt}\iint u_t^2\rho\,dxdy  
+\alpha\beta\iint u_t^2\rho\,dxdy
\leq 0,
$$
and, therefore,
\begin{equation}\label{5.11}
\|e^{\alpha x}u_t(t,\cdot,\cdot)\|^2_{L_{2,+}} \leq ce^{-\alpha\beta t}\qquad \forall t\geq 0,
\end{equation}
where the constant $c$ depends on $b$, $\alpha$, $\beta$, $\|u_0\|_{H_+^{3,\exp(2\alpha x)}}$.

Next, write down equality \eqref{2.17}:
\begin{multline}\label{5.12}
\frac{d}{dt} \iint (u_{xy}^2+u_{yy}^2)\rho\,dxdy +
\int_0^L (u_{xxy}^2\rho +2u_{xxy}u_{xy}\rho' -u_{xy}^2\rho'' +bu_{xy}^2\rho)\big|_{x=0}\,dy \\ + \iint (3u_{xxy}^2 +4u_{xyy}^2 +u_{yyy}^2- bu_{xy}^2 -bu_{yy}^2)\rho'\,dxdy -
\iint (u_{xy}^2 +u_{yy}^2)\rho'''\,dxdy \\ =
2\iint (uu_x)_y (u_{xxy}\rho +u_{xy} \rho' +u_{yyy}\rho)\,dxdy.
\end{multline}
We have:
\begin{multline*}
\iint uu_{xy}u_{xxy}\rho\,dxdy  \\ \leq \Bigl(\iint u^4\,dxdy\Bigr)^{1/4} \Bigl(\iint u_{xy}^4\rho^2\,dxdy\Bigr)^{1/4} \Bigl(\iint u^2_{xxy}\rho\,dxdy\Bigr)^{1/2} \\ \leq
\varepsilon \iint |D^2u_y|^2\rho\,dxdy + c(\varepsilon) \bigl(\|u\|^4_{H^1_+} +\|u\|_{H_+^1}\bigr)\iint
u_{xy}^2\rho\,dxdy,
\end{multline*}
\begin{multline*}
\iint u_xu_yu_{xxy}\rho\,dxdy \leq \sup\limits_{\Sigma_+}|u_y\rho^{1/2}|\Bigl(\iint u_x^2\,dxdy\Bigr)^{1/2} \Bigl(\iint u_{xxy}^2\rho\,dxdy\Bigr)^{1/2} \\ \leq
c\|u\|_{H_+^1}\iint |D^2u_y|^2\rho\,dxdy + c\|u\|^2_{H^1_+}\Bigl(\iint |D^2u_y|^2\rho\,dxdy\Bigr)^{1/2}.
\end{multline*}
Other terms in the right side of \eqref{5.12} can be handled in a similar way. Moreover,
$$
\int_0^L u^2_{xy}\big|_{x=0}\,dy \leq \varepsilon \iint u^2_{xxy}\rho\,dxdy + c(\varepsilon)\iint u^2_{xy}\rho\,dxdy.
$$
As a result, it follows from \eqref{5.8} and \eqref{5.12} that for $t\geq 0$ 
\begin{equation}\label{5.13}
e^{\alpha\beta t}\|u_y(t,\cdot,\cdot)\|^2_{H_+^{1,\exp(2\alpha x)}} + \int_0^t e^{\alpha\beta\tau} \iint |D^2u_y|^2\rho\,dxdy\,d\tau \leq c.
\end{equation}
Write down equality \eqref{1.1} in a form
\begin{equation}\label{5.14}
u_{xxx}= -u_t-bu_x-uu_x-u_{xyy} 
\end{equation}
Then inequalities \eqref{1.14} and \eqref{5.13} imply that
$$
e^{\alpha\beta t}\|u_{xx}(t,\cdot,\cdot)\|_{L_{2,+}^{\rho(x)}}^2 + \int_0^t e^{\alpha\beta\tau} \|u_{xxx}\|_{L_{2,+}^{\rho(x)}}^2\,d\tau \leq c.
$$
Combination of this inequality with \eqref{5.13} yields that
\begin{equation}\label{5.15}
e^{\alpha\beta t}\|u(t,\cdot,\cdot)\|^2_{H_+^{2,\exp(2\alpha x)}} + \int_0^t e^{\alpha\beta\tau} \iint |D^3u|^2\rho\,dxdy\,d\tau \leq c.
\end{equation}

Finally, write down equality \eqref{2.18}:
\begin{multline}\label{5.16}
\frac{d}{dt} \iint (u_{xyy}^2+u_{yyy}^2)\rho\,dxdy +
\int_0^L (\nu_2^2\rho +2\nu_2u_{xyy}\rho' -u_{xyy}^2\rho'' +bu_{xyy}^2\rho)\big|_{x=0}\,dy \\ + \iint (3u_{xxyy}^2 +4u_{xyyy}^2 +u_{yyyy}^2- bu_{xyy}^2 -bu_{yyy}^2)\rho'\,dxdy -
\iint (u_{xyy}^2 +u_{yyy}^2)\rho'''\,dxdy \\ =
2\iint (uu_x)_{yy} (u_{xxyy}\rho +u_{xyy} \rho' +u_{yyyy}\rho)\,dxdy.
\end{multline}
Here $(uu_x)_{yy} = uu_{xyy} +2u_yu_{xy} +u_xu_{yy}$ and
\begin{multline*}
 \iint uu_{xyy}u_{xxyy}\rho\,dxdy \leq \|u\|_{L_{\infty,+}} \|u_{xyy}\|_{L_{2,+}^{\rho(x)}} \|u_{xxyy}\|_{L_{2,+}^{\rho(x)}} \\ \leq \varepsilon \iint u_{xxyy}^2\rho\,dxdy +c(\varepsilon)\|u\|^2_{H_+^2} \|u\|^2_{H_+^{3,\rho(x)}},
\end{multline*}
\begin{multline*}
\iint u_yu_{xy}u_{xxyy}\rho\,dxdy \\ \leq \Bigl(\iint u_y^4\,dxdy\Bigr)^{1/4} \Bigl(\iint u_{xy}^4\rho^2\,dxdy\Bigr)^{1/4} \Bigl(\iint u^2_{xxyy}\rho\,dxdy\Bigr)^{1/2} \\ \leq \varepsilon \iint u_{xxyy}^2\rho\,dxdy +c(\varepsilon)\|u\|^2_{H_+^2} \|u\|^2_{H_+^{3,\rho(x)}}.
\end{multline*}
Moreover,
$$
\int_0^L u^2_{xyy}\big|_{x=0}\,dy \leq \varepsilon \iint u^2_{xxyy}\rho\,dxdy + c(\varepsilon)\iint u^2_{xyy}\rho\,dxdy.
$$
As a result, it follows from \eqref{5.15} and \eqref{5.16} that for $t\geq 0$ 
\begin{equation}\label{5.17}
e^{\alpha\beta t}\|u_{yy}(t,\cdot,\cdot)\|^2_{H_+^{1,\exp(2\alpha x)}} + \int_0^t e^{\alpha\beta\tau} \iint |D^2u_{yy}|^2\rho\,dxdy\,d\tau \leq c.
\end{equation}
Application of equality \eqref{5.14} yields that
$$
\|u_{xxx}(t,\cdot,\cdot)\|^2_{L_{2,+}^{\rho(x)}} \leq ce^{-\alpha\beta t}.
$$
Application of \eqref{1.15} finishes the proof.
\end{proof}

\end{document}